\documentclass[12pt]{article}
\usepackage{graphicx,amsmath,amssymb}
\usepackage{color}
\setlength{\textheight}{8.6in}
\setlength{\topmargin}{0.4in}
\setlength{\headsep}{0in}
\setlength{\textwidth}{5.6in}
\setlength{\oddsidemargin}{0.4in}
\setlength{\evensidemargin}{0in}

\makeatletter
 \renewcommand{\theequation}{%
    \thesection.\arabic{equation}}
 \@addtoreset{equation}{section}
\makeatother


\newcommand{\qed}{\hfill\framebox(6,8){}}
\newtheorem{thm}{Theorem}[section]
\newtheorem{lem}[thm]{Lemma}
\newtheorem{defn}{Definition}[section]
\newtheorem{cor}[thm]{Corollary}
\newtheorem{prop}[thm]{Proposition}




\newcommand{\bR}{{\mathbb R}}

\def\eproof{\hfill{\vrule height5pt width3pt depth0pt}}

\begin{document}
\title{{\bf Multiple front standing waves  
in the FitzHugh-Nagumo equations}}

\author{Chao-Nien Chen \thanks{Department of Mathematics, National Tsing Hua University, Hsinchu 300, Taiwan ({\tt chen@math.nthu.edu.tw)}}
\and {\'Eric S\'er\'e}\thanks{CEREMADE, Universit\'e Paris-Dauphine, PSL Research University, CNRS, UMR 7534, Place de Lattre de Tassigny, F-75016 Paris, France 
({\tt sere@ceremade.dauphine.fr)}}}

\date{}
\maketitle

\begin{quote}
\textbf{Abstract}:
There have been several existence results for the standing
waves of FitzHugh-Nagumo equations. Such waves are the connecting orbits
of an autonomous second-order Lagrangian system and the corresponding kinetic
energy is an indefinite quadratic form in the velocity terms. When the
system has two stable hyperbolic equilibria, there exist two stable standing
fronts, which will be used in this paper as building blocks, to construct
stable standing waves with multiple fronts in case the equilibria are of
saddle-focus type. The idea to prove existence is somewhat close in spirit
to \cite{BS}; however several differences are required in the argument:
facing a strongly indefinite functional, we need to perform a nonlocal
Lyapunov-Schmidt reduction; in order to justify the stability of multiple
front standing waves, we rely on a more precise variational characterization
of such critical points. Based on this approach, both stable and unstable
standing waves are found.

\textbf{Key words}: reaction-diffusion system, FitzHugh-Nagumo equations, standing wave, stability, Hamiltonian system, heteroclinic connection, multibump solution, variational approach, Lyapunov-Schmidt reduction. 

\textbf{AMS subject classification}: 34C37, 35J50, 35K57.
\end{quote}

\newpage

\section{Introduction} \label{sec_intro}
\setcounter{equation}{0}
\renewcommand{\theequation}{\thesection.\arabic{equation}}
Following a fascinating idea of Turing \cite{T}, reaction-diffusion systems
\cite{BL,CCHg,CC1,CET,F,L,NAY} serve as models for studying pattern formation
and wave propagation. Significant progress
\cite{CC,CCF,CCHR,CCR,CH2,CKM,CT,CJM,DY2,DHK,O,RS,RW,vCNT,WW} on the self-organized patterns
has been made for the system of FitzHugh-Nagumo equations
%
\begin{eqnarray}
\label{eq_FN_u}
&&u_{t}-du_{xx}=f(u)-v,
\\
\label{eq_FN_v}
&&\tau v_{t}- v_{xx}=u-\gamma v.
\end{eqnarray}
Here $f(\xi )=\xi (\xi -\beta )(1-\xi )$,
${\displaystyle \beta \in (0,{1}/{2})}$ and
$d, \tau , \gamma \in (0,\infty )$. Historically the original model
\cite{F,NAY} was derived as a simplification of the Hodgkin-Huxley equations
\cite{HH} for nerve impulse propagation. In recent years \eqref{eq_FN_u}-\eqref{eq_FN_v} has been extensively studied as a paradigmatic
activator-inhibitor system. Such systems are of great interest to the scientific
community as breeding grounds for studying the generation of localized
structures.
\smallskip

The standing wave solutions of \eqref{eq_FN_u}-\eqref{eq_FN_v} are the
connecting orbits of a second order Lagrangian system
%
\begin{eqnarray}
\label{eq_u}
&&-d u''=f(u)-v,
\\
\label{eq_v}
&&-v'' =u-\gamma v.
\end{eqnarray}
Associated with \eqref{eq_u}-\eqref{eq_v}, the Lagrangian is
%
\begin{eqnarray}
\label{La}
{L(u,v,u',v')}&=&\frac{d}{2}(u')^{2}-\frac{1}{2}(v')^{2}+uv-
\frac{\gamma }{2}v^{2}-\int _{0}^{u} f(\xi )d\xi .
\end{eqnarray}
Since \eqref{eq_u}-\eqref{eq_v} is an autonomous Lagrangian system, the
associated energy
%
\begin{eqnarray}
\label{Eg}
E(u,v,u',v'):=\frac{d}{2}(u')^{2}-\frac{1}{2}(v')^{2}-uv+
\frac{\gamma }{2}v^{2}+\int _{0}^{u} f(\xi )d\xi
\end{eqnarray}
is constant along any solution. Expressed in terms of the momenta
$p=du'$ and $q=-v'$, this energy becomes the Hamiltonian
%
\begin{eqnarray}
\label{Hm}
H(u,v,p,q):=\frac{1}{2d}p^{2}-\frac{1}{2}q^{2}-uv+\frac{\gamma }{2}v^{2}+
\int _{0}^{u} f(\xi )d\xi \;.
\end{eqnarray}
The Hamiltonian system associated with $H$ in the phase space
${\mathbb{R}}^{2}\times{\mathbb{R}}^{2}$ will be referred to as (HS). It is a reformulation of \eqref{eq_u}-\eqref{eq_v} as a system of first-order equations.
\smallskip

When $\gamma$ is not too small, system \eqref{eq_u}-\eqref{eq_v} has two hyperbolic equilibria
$(u_{-},v_{-})=(0,0)$ and $(u_{+},v_{+})$ with $u_{+}>0$, that are stable for \eqref{eq_FN_u}-\eqref{eq_FN_v}. Since we are
interested in standing front solutions joining such two equilibria, they
must have the same energy $E$, and this imposes the condition
$\gamma =9(2\beta ^{2}-5\beta +2)^{-1}$, as to be a basic assumption of
the paper. Under this assumption, it is easy to see that
$(u_{+},v_{+})=(2(\beta +1)/3,2(\beta +1)/3\gamma )$ and the symmetry
$S$ with respect to the center $(u_{+}/2,v_{+}/2)$ preserves the Lagrangian.
As a consequence, the linearized equations at $(u_{-},v_{-})$ and $(u_{+},v_{+})$ are the same.
\smallskip

Recall from \cite{CKM} that $\hat{v}$ is a $C^{\infty }$-function satisfying
%
\begin{eqnarray}
\hat{v}(x)=\left \{
\begin{array}{c@{\quad }c@{\quad }c}
v_{+} & \text{ for } & x \geq 1
\\
0& \text{ for } & x\leq -1.%
\end{array}
\right .
\end{eqnarray}

To show the existence of standing front solutions of \eqref{eq_u}-\eqref{eq_v},
we work with affine functional spaces of the form
$H_{w}=w+H^{1}({\mathbb{R}})$, with $w=\hat{v}, \hat{u}$, where
$\hat{u}=-\hat{v}''+\gamma \hat{v}$.

A crucial point is that \eqref{eq_v} is a linear equation allowing to solve
$v$ from $u$. First, given any $\phi \in H^{1}({\mathbb{R}})$, we denote
${\mathcal{L}}\phi $ the unique solution, in $H^{1}({\mathbb{R}})$, of the equation
%
\begin{eqnarray}%
\label{Lphi}
-g''+\gamma g=\phi .
\end{eqnarray}
Direct calculation shows that ${\mathcal{L}}$ is a bounded self-adjoint operator
in $L^{2}({\mathbb{R}})$. Then, for $u \in H_{\hat{u}}$, we denote
${\mathcal{L}} u:=\hat{v}+{\mathcal{L}} \phi $ where
$\phi =u-\hat{u}\in H^{1}({\mathbb{R}})$. By construction, ${\mathcal{L}} u$ is
the unique solution, in $H_{\hat{v}}$, of the equation
\begin{equation*}
-v''+\gamma v= u.
\end{equation*}
For $u \in H_{\hat{u}}$, define
%
\begin{eqnarray}%
\label{J(u)0}
J(u)=\int _{-\infty }^{\infty }L(u,{\mathcal{L}}u,u',{\mathcal{L}}u')dx\,.
\end{eqnarray}
It is not difficult to verify that
%
\begin{eqnarray}%
\label{minmax0}
J(u)&=& \max _{v \in H_{\hat{v}}}\int _{-\infty }^{\infty }L(u,v,u',v') dx.
\end{eqnarray}
In \cite{CKM} the action functional $J$ has been employed, through a minimization
argument, to obtain a basic type standing front solution of \eqref{eq_FN_u}-\eqref{eq_FN_v} as follows.
%
\begin{thm}
\label{thm:exist}
If ${\displaystyle \beta \in (0,{1}/{2})}$, $\gamma =9(2\beta ^{2}-5\beta +2)^{-1}$ and $\,d>\gamma ^{-2}$, there
exists a standing front solution $(u^{*},v^{*})$ of \eqref{eq_FN_u}-\eqref{eq_FN_v}
with asymptotic behavior $(u^{*},v^{*})\to (u_{-},v_{-}) $ as
$x\to -\infty $ and
${\displaystyle (u^{*},v^{*})\to (u_{+},v_{+}) }$ as $x\to \infty $. Moreover,
$u^{*}$ is a minimizer of $J$ over $H_{\hat{u}}$.
\end{thm}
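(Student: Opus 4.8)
The plan is to obtain $u^*$ by the direct method of the calculus of variations applied to $J$ on the affine Hilbert space $\hat u+H^1(\bR)$, and then to read off the asymptotics from the Euler--Lagrange equations. The first step is to recast $J$ in a manifestly coercive form. Writing $v=\L u$ and using $-v''+\g v=u$, one integrates by parts to rewrite the $v$-dependent part $\int(-\tfrac12(v')^2+uv-\tfrac\g2 v^2)\,dx$ as $\int(\tfrac12(v')^2+\tfrac\g2 v^2)\,dx$, so that
\begin{eqnarray*}
J(u)=\int_{-\infty}^{\infty}\Big[\tfrac d2 (u')^2+\tfrac12 (v')^2+\tfrac\g2 v^2-F(u)\Big]\,dx,\qquad F(u):=\int_0^u f,
\end{eqnarray*}
all integrals understood in the renormalized sense in which the integrand is measured from its value at the equilibria. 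Here the assumption $\g=9(2\b^2-5\b+2)^{-1}$ is precisely what forces the energy $E$ to coincide at $(0,0)$ and $(u_+,v_+)$, hence $L=0$ at both equilibria, so that $L(u,\L u)\to0$ as $x\to\pm\infty$ and $J$ is finite on $\hat u+H^1(\bR)$. In Fourier variables the quadratic part of $J$ has symbol $dk^2+(k^2+\g)^{-1}$, whose infimum over $k$ equals $\g^{-1}$ and is attained at $k=0$ exactly when $d>\g^{-2}$; this is where the hypothesis on $d$ enters, guaranteeing $-d\,\del_x^2+\L\ge \g^{-1}$ as a self-adjoint operator.

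With this structure I would verify that $J$ is of class $C^1$ on $\hat u+H^1(\bR)$ and establish that it is bounded below and coercive: writing $u=\hat u+\f$ with $\f\in H^1(\bR)$, the positive quadratic form controls $\|\f\|_{H^1}^2$, while the negative contribution of $F$ is harmless because $-F(s)=\tfrac14 s^4-\tfrac{1+\b}3 s^3+\tfrac\b2 s^2$ grows superquadratically; the only place $f'$ is positive is the bounded transition region, whose effect is absorbed by the leading $H^1$ term together with the quartic. Consequently every minimizing sequence $u_n=\hat u+\f_n$ is bounded in $H^1(\bR)$ and, after passing to a subsequence, converges weakly.

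The main obstacle is compactness. Because the Lagrangian is autonomous and $\L$ commutes with translations, $J$ is translation invariant on the affine space, so minimizing sequences are precompact only modulo translation; a weak limit could a priori lose mass at infinity or split into two well-separated transitions. I would remove the translation degeneracy by normalizing each $u_n$ so that it meets the level $u_+/2$ at $x=0$, which is natural in view of the symmetry $S$ about $(u_+/2,v_+/2)$ preserving $L$. With the front thus pinned, vanishing is impossible since the prescribed endpoints $0$ and $u_+$ differ, and dichotomy is excluded by a concentration--compactness/subadditivity argument: any splitting of the connecting orbit into two far-apart pieces would have energy at least the sum of two subproblems, and the uniform lower bound $\g^{-1}$ furnished by $d>\g^{-2}$ yields the strict inequality ruling this out. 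Hence the normalized sequence is compact, and using weak lower semicontinuity of the convex quadratic part together with Fatou's lemma for the superquadratic term $-F$, the limit $u^*$ attains $\inf J$.

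It then remains to identify $u^*$ as the desired front. Since $J$ is $C^1$ and $u^*$ minimizes it, $J'(u^*)=0$; by the min--max identity \eqref{minmax0} the variation in $v$ is stationary at $v^*:=\L u^*$, and the variation in $u$ gives $-d(u^*)''=f(u^*)-v^*$, while $v^*=\L u^*$ means $-(v^*)''+\g v^*=u^*$. Thus $(u^*,v^*)$ solves \eqref{eq_u}--\eqref{eq_v}. Finally $u^*-\hat u\in H^1(\bR)\hookrightarrow C_0(\bR)$ forces $u^*\to\hat u(-\infty)=0$ and $u^*\to\hat u(+\infty)=u_+$, whence $v^*=\L u^*\to v_\mp$ accordingly; the hyperbolicity of the two equilibria then upgrades these limits to exponential convergence, completing the identification. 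I expect the compactness step of the third paragraph to be the crux of the argument.
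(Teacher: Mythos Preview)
The paper does not prove this theorem itself; it is recalled from \cite{CKM}. The central device from that reference is, however, stated explicitly in the present paper as Proposition~\ref{positive}: under the constraint $\gamma=9(2\beta^2-5\beta+2)^{-1}$ one has the exact identity
\[
J(u)=\int_{-\infty}^{\infty}\Big\{\tfrac12\big(d-\tfrac{1}{\gamma^2}\big)(u')^2+\tfrac12\big(v'-\tfrac{u'}{\gamma}\big)^2+\tfrac{\gamma}{2}\big(v-\tfrac{u}{\gamma}\big)^2+\tfrac14\, u^2(u-u_+)^2\Big\}\,dx,
\]
with $v=\L u$. This is a simultaneous completion of squares: the cross term $uv$ and the cubic part of the potential are absorbed together, and the equal-energy condition on $\gamma$ is exactly what turns the residual potential into the perfect square $\tfrac14 u^2(u-u_+)^2$. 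With $J$ written this way, $d>\gamma^{-2}$ makes every integrand nonnegative; boundedness below, coercivity, and weak lower semicontinuity are immediate, and along a translation-normalized minimizing sequence the double-well term $\int u^2(u-u_+)^2$ pins the asymptotics directly, so no genuine dichotomy analysis is needed.

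Your route is reasonable in outline but forgoes this simplification. Your rewriting $J=\int[\tfrac d2(u')^2+\tfrac12(v')^2+\tfrac\gamma2 v^2-F(u)]$ still carries a sign-indefinite potential, so coercivity and lower semicontinuity require separate work, and your Fourier computation of the symbol $dk^2+(k^2+\gamma)^{-1}$ recovers only part of what the identity gives for free. More importantly, the sentence excluding dichotomy is not convincing as written: the spectral lower bound $\gamma^{-1}$ on the nonlocal quadratic form does not by itself yield a strict subadditivity inequality, and for a single heteroclinic it is unclear what the competing ``subproblems'' are supposed to be. If you pursue your approach you will need a sharper argument at that point; alternatively, deriving Proposition~\ref{positive} first makes the compactness step essentially trivial.
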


If we define $(u_{*}(x)$, $v_{*}(x)):=(u^{*}(-x),v^{*}(-x))$, then $(u_{*},v_{*})$ is also a standing front solution of \eqref{eq_FN_u}-\eqref{eq_FN_v}, by reversibility of \eqref{eq_u}-\eqref{eq_v}.
\smallskip

The goal of this paper is to construct multiple front solutions using
$(u^{*},v^{*})$ together with the reverse orbit $(u_{*},v_{*} ) $. We only
deal with the case when the equilibria are of saddle-focus type; that is,
the linearization of the Hamiltonian system (HS) at $(u_{-},v_{-},0,0)$, as well as
$(u_{+},v_{+},0,0)$, has four complex eigenvalues $\pm \lambda \pm i\omega $ with $\lambda>0$ and $\omega>0$. As to be seen
in the Appendix, this additional condition holds when $\vert \gamma^2d-2-\beta\gamma \vert < 2\sqrt{1+\beta\gamma}$.
Summarizing, our assumptions on $\beta,\,\gamma,\,d$ are the following:
%
\begin{equation}%
\label{Hy}
\begin{split}
&0<\beta <1/2\,,\ \ \gamma =9/(2\beta ^{2}-5\beta +2)\,,\\
&\frac{1}{\gamma^2 }\max(1,2+\beta\gamma-2\sqrt{1+\beta\gamma}) < d < \frac{2+\beta\gamma+2\sqrt{1+\beta\gamma}}{\gamma^2 }\,.
\end{split}
\end{equation}

We now state the main existence result of the paper.

\begin{thm}
\label{thm:multi}
Assume that (\ref{Hy}) is satisfied. Then there are two real numbers
$\kappa _{+},\,\kappa _{-}$, and, for each small
$\sigma >0$, a large constant $\Delta_{\sigma }>0$, such that for any positive
integer $N$ and any sequence of positive integers
${\mathbf{{n}}}=(n_{i})_{1\leq i\leq N}$ with $n_{i}\geq \Delta_{\sigma }$ for every
$i$, there exist positive numbers $X_{1},\cdots ,{X_{N}}$ and a solution
${(\hat{u}_{\mathbf{{n}}},\hat{v}_{\mathbf{{n}}})}$ of \eqref{eq_u}-\eqref{eq_v}
satisfying the following properties:
\smallskip

\noindent
{(a)} For $i$ odd in $[1,N]$,
\begin{equation*}
\Vert (\hat{u}_{\mathbf{{n}}},\hat{v}_{\mathbf{{n}}})(\cdot +C_{i}) - (u_{*},v_{*})
\Vert _{H^{1}(-A_{i},A_{i+1})} < \sigma \,,\; \vert X_{i}-2\pi n_{i}
/\omega -\kappa _{+}\vert <\sigma \,.
\end{equation*}
(b) For $i$ even in $[0,N]$,
\begin{equation*}
\Vert (\hat{u}_{\mathbf{{n}}},\hat{v}_{\mathbf{{n}}})(\cdot +C_{i}) - (u^{*},v^{*})
\Vert _{H^{1}(-A_{i},A_{i+1})} < \sigma \,,\; \vert X_{i}-2\pi n_{i}
/\omega -\kappa _{-}\vert <\sigma \,.
\end{equation*}
Here, $A_{0}=+\infty $, $C_{0}=0$, $C_{i}=C_{i-1}+X_{i}$,
$A_{i}=X_{i}/2$ for $1\leq i\leq N$, and $A_{N+1}=+\infty $.
\end{thm}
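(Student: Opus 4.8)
The plan is to realize $(\hat u_{\mathbf n},\hat v_{\mathbf n})$ as a small perturbation of an explicitly glued profile and to locate it through a finite-dimensional reduction whose reduced variable is the vector $\mathbf{X}=(X_1,\dots,X_N)$ of inter-front gaps. First I would fix the combinatorial pattern dictated by the statement: forward fronts $(u^*,v^*)$ on the even blocks and reverse fronts $(u_*,v_*)$ on the odd blocks, positioned at the centers $C_i$ and separated by the cut points $A_i$. For a gap vector $\mathbf{X}$ with all components large, I would build an approximate profile $w_{\mathbf{X}}\in\hat u+H^1(\mathbb{R})$ by patching the translated single-front profiles across the $A_i$ with a partition of unity, so that $w_{\mathbf{X}}$ coincides with the appropriate front on each block and interpolates near the equilibria in between. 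Because the equilibria are of saddle-focus type, each front approaches $(u_\pm,v_\pm)$ like $e^{-\lambda|x|}$ times a bounded oscillation in $\omega x$; this controls the patching residual $\nabla J(w_{\mathbf{X}})$, making it exponentially small in $\min_i X_i$.

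The core construction is then a Lyapunov--Schmidt reduction for $\nabla J(u)=0$. Since the system is autonomous, each single front carries a translation zero mode ($(u^*)'$ or $(u_*)'$), so the linearization at $w_{\mathbf{X}}$ has an approximate $N$-dimensional kernel $\mathcal{K}_{\mathbf{X}}$ spanned by the block-derivatives. I would write $u=w_{\mathbf{X}}+\phi$ with $\phi\perp\mathcal{K}_{\mathbf{X}}$ and solve the projected equation $P_{\mathbf{X}}^\perp\nabla J(w_{\mathbf{X}}+\phi)=0$ by a contraction argument. The essential analytic input is uniform invertibility of $P_{\mathbf{X}}^\perp J''(w_{\mathbf{X}})P_{\mathbf{X}}^\perp$ on $\mathcal{K}_{\mathbf{X}}^\perp$, with a spectral gap independent of $\mathbf{X}$. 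This should follow from two ingredients: by Theorem~\ref{thm:exist} a single front minimizes $J$, so $J''$ at one front is nonnegative with kernel reduced to its translation mode; and the equilibria are hyperbolic, so the operator at infinity has spectrum bounded away from $0$, keeping the continuous spectrum positive. A localization and gluing argument then transfers the gap to $\mathcal{K}_{\mathbf{X}}^\perp$, and the implicit function theorem yields a unique small $\phi=\phi_{\mathbf{X}}$, smooth in $\mathbf{X}$ and exponentially small. The reduction is \emph{nonlocal} precisely because $J$ carries the operator $\mathcal{L}$, so every estimate must track the nonlocal coupling between blocks.

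It remains to tune $\mathbf{X}$. Setting $\Phi(\mathbf{X})=J(w_{\mathbf{X}}+\phi_{\mathbf{X}})$, the reduced solutions are exactly the critical points of $\Phi$, and a standard argument shows that $\nabla_{\mathbf{X}}\Phi=0$ is equivalent to $\nabla J=0$. I would expand $\Phi(\mathbf{X})=N\cdot(\text{front energy})+\sum_{i}\Psi_i(X_i)+(\text{h.o.t.})$, where each nearest-neighbor interaction $\Psi_i(X_i)$ arises from overlapping the oscillatory tails of two consecutive fronts and, by the saddle-focus asymptotics, takes the form $\Psi_i(X_i)\sim a\,e^{-\lambda X_i}\cos(\omega X_i+\theta_\pm)$ with $a\neq 0$ and a phase $\theta_\pm$ depending on whether the junction is forward--reverse or reverse--forward. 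In each period of length $2\pi/\omega$ such a term has a single nondegenerate critical point of the relevant type, located at $\omega X_i+\theta_\pm\equiv\text{const}\ (\mathrm{mod}\ 2\pi)$; this is the source of the quantization $X_i\approx 2\pi n_i/\omega+\kappa_\pm$, with $\kappa_\pm$ determined by $\theta_\pm$. Taking $n_i\geq D_\sigma$ puts all gaps in the asymptotic regime where the $\Psi_i$ decouple, so that $\nabla_{\mathbf{X}}\Phi$ has a nearly diagonal, uniformly invertible Jacobian; a fixed-point argument then produces a critical point $\mathbf{X}$ with $|X_i-2\pi n_i/\omega-\kappa_\pm|<\sigma$, and reading off $u=w_{\mathbf{X}}+\phi_{\mathbf{X}}$, $v=\mathcal{L}u$ yields the claimed $H^1$-closeness on each block $(-A_i,A_{i+1})$.

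The main obstacle I anticipate is twofold, and both parts hinge on the oscillatory, non-monotone decay forced by the saddle-focus eigenvalues. Analytically, the uniform invertibility of the reduced Hessian is more delicate than in the real-eigenvalue case, since the tails change sign and the nonlocal operator $\mathcal{L}$ spreads the coupling across blocks, so one must verify that the spectral gap genuinely survives gluing. Computationally, the entire quantization rests on extracting the exact leading amplitude $a$ and phase $\theta_\pm$ of $\Psi_i$ from the front asymptotics and on proving the resulting critical points are nondegenerate uniformly in $n_i$; any error in the phase would shift $\kappa_\pm$, so it is here that the sharp variational characterization advertised in the introduction must be brought to bear.
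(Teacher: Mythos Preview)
Your scheme is the textbook perturbative Lyapunov--Schmidt reduction, and its linchpin is the sentence ``by Theorem~\ref{thm:exist} a single front minimizes $J$, so $J''$ at one front is nonnegative with kernel reduced to its translation mode.'' That implication is false in general: minimality only gives $J''(u^*)\geq 0$, it does not force $\ker J''(u^*)=\mathrm{span}\{(u^*)'\}$. The one-dimensionality of the kernel is exactly the transversality (nondegeneracy) of the heteroclinic, and the paper explicitly states that it is \emph{unable} to verify this for the FitzHugh--Nagumo system under \eqref{Hy}. Without it, you lose the uniform invertibility of $P_{\mathbf X}^\perp J''(w_{\mathbf X})P_{\mathbf X}^\perp$, your contraction step has no spectral gap to rely on, and the same problem reappears at the reduced level when you ask that the critical points of $\Phi$ be ``nondegenerate uniformly in $n_i$.'' So the proposal, as written, has a genuine gap at its most load-bearing step.

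The paper's argument is engineered precisely to avoid nondegeneracy. The reduction is not done by inverting a linear operator but by \emph{minimizing} $J$ over a bounded convex set ${\cal U}_{({\bf u},{\bf x})}$ of functions that agree with prescribed profiles $u_i$ on windows $[C_i-z,C_i+z]$ and are small near the equilibria in between; strict convexity on these gap intervals (where $-f'$ is positive) gives a unique minimizer $b_{\bf n}({\bf u},{\bf x})$ without any spectral hypothesis. The reduced variables are therefore not just the gap lengths but the pairs $({\bf u},{\bf x})\in({\cal V}_{h,z})^{N+1}\times[-\nu,\nu]^N$, and the reduced functional ${\bf J}=J\circ b_{\bf n}$ is minimized on this compact set. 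To show the minimizer is interior, the paper uses two soft inputs: the isolatedness of \emph{all} critical points of $J$ (Proposition~\ref{prop:isolated}, proved via real-analyticity and unboundedness of the invariant manifolds) forces $\Vert J_z'\Vert$ to be bounded below on the boundary of each ${\cal V}_{h,z}$ factor, and the sign of the energy $E$ from Lemma~\ref{cos} handles the $x_i$-boundary. Nowhere is $J''(u^*)$ required to have a simple kernel; isolatedness up to translation is strictly weaker and is what makes the construction go through.
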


Let us remark that if $N$ is odd,
$(\hat{u}_{\mathbf{{n}}},\hat{v}_{\mathbf{{n}}})$ is homoclinic to
$(u_{-},v_{-})$ while for $N$ even, it is a heteroclinic connection between
$(u_{-},v_{-})$ and $(u_{+},v_{+})$. Such orbits are the standing waves
of \eqref{eq_FN_u}-\eqref{eq_FN_v} with multiple fronts; for the Hamiltonian
system they are often called multibump solutions.
\smallskip

As already mentioned, the range of parameters under consideration is such
that the basic heteroclinics $(u_{*},v_{*})$ and $(u^{*},v^{*})$ connect
two equilibria of saddle-focus type. In this situation, multi-bump solutions
are known to exist provided the stable and unstable manifolds intersect
transversally, as was proved by Devaney \cite{D} by constructing a Smale
horseshoe. Transversality condition in general is difficult to check for
a given Hamiltonian although it is generically true. Instead of verifying
transversality, we follow a strategy introduced in \cite{BS}. We first
prove that any critical point of $J$ is isolated up to translation invariance
in the spatial variable, by solving an auxiliary boundary value problem.
Then we invoke this property to show the existence of multi-bump solutions
by a variational argument.%

The variational construction for multibump and chaotic solutions has a
long history and the comments below are not exhaustive. To our knowledge,
the earliest results were established by Bolotin
\cite{Bol1,Bol2,Bol3} in the context of nonautonomous second order Lagrangian
systems, the connecting orbits being minimizers of the action. In the case
of twist maps on the annulus (also corresponding to non-autonomous Lagrangian
systems), Mather \cite{Ma} constructed chaotic connecting orbits by a minimization
method in the region between two invariant circles. For non-autonomous
first order Hamiltonian systems, multibump solutions were found by min-max
methods \cite{S1,S2} under the assumption that critical points are isolated.
This construction was extended to second order systems and elliptic PDEs
in \cite{CZR1,CZR2,CTz}. We refer to \cite{R1} and references therein for more
recent development and related results in this direction. For autonomous
problems of saddle-focus type a class of multi-bump solutions were obtained,
in the special case of a fourth order equation related to water wave
theory, by Buffoni \cite{Bu} using a shooting argument. Subsequently a
larger set of multi-bump solutions was constructed \cite{BS} by variational
and degree arguments. This method was then adapted for studying the extended
Fisher-Kolmogorov equations (of fourth order) \cite{KV}. In subsequent
works \cite{KKV1,KKV2}, a refined but more specific argument was introduced
in order to obtain optimal results on the F-K model. As already mentioned,
the present work is close in spirit to \cite{BS}. Note, however, that our
system of autonomous second order Lagrangian equations is associated with
a strongly indefinite variational problem and to our knowledge, it cannot
be reduced to a fourth order equation which would allow a simpler variational
interpretation. Instead, we use a nonlocal Lyapunov-Schmidt reduction.
Moreover our approach is purely variational, contrary to \cite{BS} where
degree theory was employed. Another novelty is our proof that \textit{all}
critical points are isolated up to translations in $x$, while in
\cite{BS} the first step just consisted in showing that the basic one-bump
solution is isolated. The purely variational construction and the stronger
isolatedness property are needed for the sake of stability analysis, as
always an important issue in considering pattern formation as well as wave
propagation.
\smallskip

For the stationary solutions of \eqref{eq_FN_u}-\eqref{eq_FN_v}, stability
questions have been studied in \cite{CH1,CH2,CKM,CJM,O,Y3} by various methods.
In conjunction with the strongly indefinite variational structure, the
Maslov index \cite{CH,CH2} and relative Morse index \cite{CH1} provide
useful information to determine the stability of such solutions, obtained
as the critical points of the action functional. Let
${\mathbb{C}}^{-}=\{\zeta | \zeta \in {\mathbb{C}} \text{ and } Re \zeta <0 \}$,
where $Re \zeta $ denotes the real part of $\zeta $. Denote by
$\Lambda $ the linearization of \eqref{eq_FN_u}-\eqref{eq_FN_v} at a standing
wave solution $(u,v)$. A standing wave $(u,v)$ is said to be non-degenerate
if zero is a simple eigenvalue of $\Lambda $.\newpage

\begin{defn}
A non-degenerate standing wave $(u,v)$ of \eqref{eq_FN_u}-\eqref{eq_FN_v} is spectrally stable if all the non-zero
eigenvalues of $\Lambda $ are in ${\mathbb{C}}^{-}$.
\end{defn}

The following result follows immediately from an index method developed
in \cite{CCH}:

\begin{thm}
\label{LS}
Let $(u,v)$ be a non-degenerate standing wave of \eqref{eq_FN_u}-\eqref{eq_FN_v}.
Suppose $u$ is a local minimizer of $J$ then $(u,v)$ is spectrally stable,
provided that $\tau <\gamma ^{2}$.
\end{thm}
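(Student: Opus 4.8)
The plan is to turn the two hypotheses --- that $u$ minimizes $J$ and that the wave is non-degenerate --- into a spectral statement about a scalar, $\mu$-dependent reduction of the linearization $\Lambda$, and then to use the sign condition $\tau<\gamma^2$ to exclude eigenvalues in the closed right half-plane. First I would write out the eigenvalue problem $\Lambda(\phi,\psi)=\mu(\phi,\psi)$ obtained by linearizing \eqref{eq_FN_u}-\eqref{eq_FN_v} at the standing wave, namely
\begin{align}
&d\phi''+f'(u)\phi-\psi=\mu\phi,\\
&\psi''+\phi-\gamma\psi=\tau\mu\psi.
\end{align}
For any $\mu$ with $\mathrm{Re}\,\mu\ge 0$ the operator $-\partial_{xx}+\gamma+\tau\mu$ is boundedly invertible on $L^2(\bR)$, so the second equation solves $\psi=\L_{\tau\mu}\phi$ with $\L_{s}:=(-\partial_{xx}+\gamma+s)^{-1}$, and the problem collapses to the scalar nonlocal relation $(\M_{\tau\mu}+\mu)\phi=0$, where $\M_{s}:=-d\,\partial_{xx}-f'(u)+\L_{s}$. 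The decisive structural fact is that a direct computation of the second variation of \eqref{J(u)0} gives $J''(u)=\M_{0}$, a self-adjoint operator; thus the minimizer hypothesis means $\M_{0}\ge 0$, while non-degeneracy means $\ker\M_{0}=\bR\,u'$, the kernel corresponding to the simple eigenvalue $\mu=0$.

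The core of the argument is to pair $(\M_{\tau\mu}+\mu)\phi=0$ with $\phi$ in $L^2(\bR)$ and separate real and imaginary parts. Using $\psi=\L_{\tau\mu}\phi$ one computes $\la\L_{\tau\mu}\phi,\phi\ra=\|\psi'\|^2+(\gamma+\tau\bar\mu)\|\psi\|^2$, so with $\mu=\alpha+i\beta$ the imaginary part of the paired relation reduces to $\beta\,(\|\phi\|^2-\tau\|\psi\|^2)=0$. This yields a dichotomy: either $\beta=0$, or $\|\phi\|^2=\tau\|\psi\|^2$. In the genuinely complex case $\beta\neq 0$ I would pass to Fourier variables; since $\hat\phi=(\xi^2+\gamma+\tau\mu)\hat\psi$, the constraint $\|\phi\|^2=\tau\|\psi\|^2$ becomes
\begin{equation}
\int_{\bR}\bigl[(\xi^2+\gamma+\tau\alpha)^2+(\tau\beta)^2-\tau\bigr]\,|\hat\psi(\xi)|^2\,d\xi=0,
\end{equation}
and because $\alpha\ge 0$ each bracket is bounded below by $\gamma^2-\tau>0$, forcing $\psi\equiv 0$ and hence $\phi\equiv 0$. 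Thus no eigenvalue with $\mathrm{Re}\,\mu\ge 0$ lies off the real axis. In the real case $\beta=0,\ \alpha>0$, the operator $\M_{\tau\alpha}$ is self-adjoint and I would compare it with $\M_0$ through the identity
\begin{equation}
\la(\L_{\tau\alpha}-\L_{0})\phi,\phi\ra+\alpha\|\phi\|^2=\alpha\int_{\bR}\Bigl[1-\frac{\tau}{(\xi^2+\gamma+\tau\alpha)(\xi^2+\gamma)}\Bigr]|\hat\phi(\xi)|^2\,d\xi,
\end{equation}
whose right-hand side is strictly positive as soon as $\tau\le\gamma^2$ and $\alpha>0$. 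Together with $\M_0\ge 0$ this gives $\la(\M_{\tau\alpha}+\alpha)\phi,\phi\ra>0$, contradicting the eigenrelation; hence no positive real eigenvalue exists. Combined with the simplicity of $\mu=0$ and the fact that the essential spectrum, governed by the constant-coefficient limits at $\pm\infty$, sits in $\bC^-$ because the rest states are stable, this establishes spectral stability.

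The main obstacle is precisely the non-self-adjointness of $\Lambda$: the reduced operator $\M_{\tau\mu}$ depends on the unknown $\mu$ and is not symmetric once $\mu\notin\bR$, so a single variational inequality cannot settle the question. The point that unlocks the argument is that the imaginary part of the scalar relation produces the rigid identity $\|\phi\|^2=\tau\|\psi\|^2$, which is flatly incompatible with $\tau<\gamma^2$ since the symbol $\xi^2+\gamma$ is bounded below by $\gamma$; the minimizer hypothesis is then needed only to rule out the real eigenvalues. A secondary point to verify is that eigenfunctions decay exponentially --- so that the Fourier and integration-by-parts manipulations are legitimate --- and that the constant-coefficient symbols at $\pm\infty$ indeed have spectrum in $\bC^-$, which is where the stability of the two equilibria enters.
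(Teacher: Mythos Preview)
Your argument is correct and self-contained. The paper, by contrast, does not prove this theorem at all: it simply states that the result ``follows immediately from an index method developed in \cite{CCH}'' and moves on. So there is nothing to compare line by line; what you have supplied is genuinely more than the paper provides.

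On the level of strategy the two routes are rather different. The index method of \cite{CCH} (and the related relative Morse index machinery alluded to in the introduction) works by relating the Morse index of the critical point of $J$ to a count of unstable eigenvalues of $\Lambda$, and is designed to handle the skew-gradient structure of \eqref{eq_FN_u}--\eqref{eq_FN_v} in a unified way. Your approach bypasses any index theory: you reduce the eigenvalue problem to the scalar relation $(\M_{\tau\mu}+\mu)\phi=0$, then exploit two elementary facts --- the imaginary part of the paired equation forces $\Vert\phi\Vert^2=\tau\Vert\psi\Vert^2$, which the Fourier lower bound $\xi^2+\gamma\geq\gamma$ makes incompatible with $\tau<\gamma^2$; and for real $\mu=\alpha>0$ the monotonicity of the symbol gives $\M_{\tau\alpha}+\alpha>\M_0\geq 0$. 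The virtue of your route is that it is entirely explicit and uses only $J''(u)=\M_0\geq 0$ and the single numerical inequality $\tau<\gamma^2$; the price is that it is tailored to this particular system, whereas the index framework is portable. Both are valid. Your remarks on the essential spectrum and on exponential decay of eigenfunctions are exactly the right caveats; these are standard and do not present difficulties here.
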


Note, however, that the non-degeneracy of a standing wave is equivalent
to the transversality of the stable and unstable manifolds, and we are
unable to prove such a property. Fortunately, we can go beyond the spectral
stability analysis, thanks to a Lyapunov functional introduced in
\cite{CJM} in a slightly different context. In Section~\ref{ly} we shall
give an extension of this Lyapunov functional, which can be applied to
the standing waves of \eqref{eq_FN_u}-\eqref{eq_FN_v}. Let us remark that
the standing waves are in affine subspaces of
$H_{loc}^{1}(\mathbb{R})\times H_{loc}^{1}(\mathbb{R})$ having
$H^{1}(\mathbb{R})\times H^{1}(\mathbb{R})$ as underlying vector space. The norm
of $H^{1}(\mathbb{R})\times H^{1}(\mathbb{R})$ induces the natural metric on such
affine spaces, and we shall study the dynamical stability of the standing
waves for this metric.
%
\begin{thm}
\label{thm:stab}
Assume that (\ref{Hy}) is satisfied and let $\tau <\gamma ^{2}$. Under the flow generated by \eqref{eq_FN_u}-\eqref{eq_FN_v}
on the affine space
$(\hat{u}_{\mathbf{{n}}}+H^{1}({\mathbb{R}})\,)\times (\hat{v}_{\mathbf{{n}}}+H^{1}({
\mathbb{R}})\,)$, the standing wave
$(\hat{u}_{\mathbf{{n}}},\hat{v}_{\mathbf{{n}}})$ is asymptotically stable for the
$H^{1}({{\mathbb{R}}})\times H^{1}({{\mathbb{R}}})$ metric, up to a phase
shift in spatial variable. More precisely, there is
$\rho _{\mathbf{{n}}}>0$ such that if $(u(x,t),v(x,t))$ is a solution of \eqref{eq_FN_u}-\eqref{eq_FN_v} and
\begin{equation*}
\Vert u(\cdot ,0) - \hat{u}_{\mathbf{{n}}}\Vert _{H^{1}({{\mathbb{R}}})}+
\Vert v(\cdot ,0) - \hat{v}_{\mathbf{{n}}}\Vert _{H^{1}({{\mathbb{R}}})} <
\rho _{\mathbf{{n}}},
\end{equation*}
then
\begin{equation*}
\inf _{y\in {{\mathbb{R}}}}\{\Vert u(\cdot ,t) - \hat{u}_{\mathbf{{n}}}(
\cdot -y)\Vert _{H^{1}({{\mathbb{R}}})}+\Vert v(\cdot ,t) - \hat{v}_{
\mathbf{{n}}}(\cdot -y)\Vert _{H^{1}({{\mathbb{R}}})}\}
\underset{t \to +\infty }{\longrightarrow } 0\,.
\end{equation*}
\end{thm}

As a final remark, there are plenty of unstable standing waves; however
we do not attempt to describe them all. We just state a result in the two-bump
case.

\begin{thm}
\label{thm:mptwobumps}
As in Theorem \ref{thm:multi}, assume (\ref{Hy}) and take a
small $\sigma >0$ and a large constant $\Delta_{\sigma }$. For any positive integer
$n\geq \Delta_{\sigma }$ there exists a solution
$(\check{u}_{n},\check{v}_{n})$ of \eqref{eq_u}-\eqref{eq_v} such that,
for some $X\in {{\mathbb{R}}}$ with
$\vert X-\pi (2n+1)/\omega -\kappa _{+}\vert <\sigma $ and
$\,\kappa _{+}$ as in Theorem \ref{thm:multi}, the following properties
hold:
\smallskip

\noindent
(i)
$\;\Vert (\check{u}_{n},\check{v}_{n}) - (u^{*},v^{*})\Vert _{H^{1}(-
\infty ,\,X/2)} < \sigma  $.%
\smallskip

\noindent
(ii)
$\;\Vert (\check{u}_{n},\check{v}_{n}) - (u_{*},v_{*})(\cdot -X)
\Vert _{H^{1}(X/2,+\infty )} < \sigma  $.%
\smallskip

\noindent
(iii) If $\tau<\gamma^2$ then $(\check{u}_{n},\check{v}_{n})$ is unstable in the following sense:
there is $\epsilon _{n}>0$ such that, for any $\rho >0$, there exist
$\tau _{n}(\rho )>0$ and a solution $(u(x,t),v(x,t))$ of \eqref{eq_FN_u}-\eqref{eq_FN_v} satisfying
\begin{equation*}
\Vert u(\cdot ,0) -\check{u}_{n}\Vert _{H^{1}({{\mathbb{R}}})}+\Vert v(
\cdot ,0) - \check{v}_{n}\Vert _{{H^{1}}({{\mathbb{R}}})} < \rho \,,
\end{equation*}
while for all $\,t\geq \tau _{n}{(\rho )}\,$, one has
\begin{equation*}
\inf _{y\in {{\mathbb{R}}}} \left \{  \Vert u(\cdot ,t) -\check{u}_{n}(
\cdot -y)\Vert _{H^{1}({{\mathbb{R}}})}+\Vert v(\cdot ,t) - \check{v}_{n}(
\cdot -y)\Vert _{{H^{1}}({{\mathbb{R}}})}\right \}  \geq \epsilon _{n}
\,.
\end{equation*}
\end{thm}

The solution $(\check{u}_{n},\check{v}_{n})$ will be found in Section~\ref{unstable} by a mountain-pass type mini-max method. In the proof of
(iii) (as well as in the proof of Theorem \ref{thm:stab}), a crucial ingredient
is Proposition \ref{prop:isolated}, stating that \textit{any} critical point
of $J$ is isolated up to translation.\\

\section{Preliminaries} 
\label{further}
In this section we recall the variational setting \cite{CKM} used to study
$(u^{*},v^{*})$ and discuss related properties, including a reduced functional
$J$ which is bounded from below. In the sequel, we work with affine functional
spaces of the form $H_{w}=w+H^{1}({\mathbb{R}})$, with
$w=0,u_{+},v_{+}, u_{*},u^{*},v_{*}$ or $v^{*}$. For
$a_{u}=0,u_{+},u_{*},u^{*}$ respectively, and
$u \in H_{a_{u}}$, we also denote
${\mathcal{L}} u:= a_{v}+{\mathcal{L}}(u-a_{u}) $, with
$a_{v}=0,v_{+}, v_{*},v^{*}$ respectively. Let us remark that
${\mathcal{L}} u$ is the unique solution, in $H_{a_{v}}$, of the
equation
\begin{equation*}
-v''+\gamma v= u.
\end{equation*}

The operator ${\mathcal{L}}$ has a simple expression:
\begin{lem}
\label{smallness}

Let $G(x)=\frac{1}{2\sqrt{\gamma }}e^{-\sqrt{\gamma }\,\vert x\vert }$. If $u\in H_{a_{u}}$ with $a_{u}\in\{0,u_{+},u^{*},u_{*}\}$, then
$\,\mathcal{L}u=G*u\,$.\medskip

\end{lem}
\begin{proof}
The function $G$ is in $W^{1,1}(\mathbb{R})$ and is a fundamental solution of the operator $-\frac{d^2}{dx^2}+\gamma$. If $u\in H_{a_{u}}$ then it is continuous and bounded on $\mathbb{R}$, so $G*u(x)=\int_{\mathbb{R}}G(x-y)u(y)dy$ is a well-defined function of $x$. This function is of class $C^2$, and solves $-v''+\gamma v=u$. In particular, $G*a_u$ solves $-v''+\gamma v=a_u$. In addition, $\lim_{x\to\pm\infty} G*a_u(x)=\lim_{x\to\pm\infty} a_v(x)$, since $\int_{\mathbb{R}}G=\frac{1}{\gamma}$. So, by the maximum principle, $G*a_u=a_v$. Finally, since $u-a_u\in H^1(\mathbb{R}),$ $G*(u-a_u)\in H^1(\mathbb{R})\,$.
We conclude that $G*u={\mathcal{L}u}\,.$
\eproof\\
\end{proof}

\noindent
Recall from \eqref{La} that the Lagrangian associated with \eqref{eq_u}-\eqref{eq_v} is $L(u,v,u',v')$. Note that the main difference with \cite{BS} is that the present system does not seem to be reducible to
an almost linear, fourth-order system having a simple variational interpretation.
So one has to deal with an indefinite Lagrangian \eqref{La}. Fortunately,
this Lagrangian is concave in $(v,v')$. We exploit this property as follows:
observe that for $u\in H_{a_{u}}$ and $v={\mathcal{L}} u\in H_{a_{v}}$,
\begin{eqnarray*}
\|\frac{d}{dx}(v-a_{v})\|^{2}_{L^{2}({\mathbb{R}})}+\gamma \|v-a_{v}\|^{2}_{L^{2}({
\mathbb{R}})} &=&\int _{-\infty }^{\infty }(u-a_{u%
})(v-a_{v})dx
\\
&\leq & \|u-a_{u}\|_{L^{2}({\mathbb{R}})}\|v-a_{v}\|_{L^{2}({\mathbb{R}})}.
\end{eqnarray*}
Hence there is a $C_{0}>0$ such that
%
\begin{eqnarray}
\label{v}
\| {\mathcal{L}} u-a_{v}\|_{H^{1}({\mathbb{R}})}\leq C_{0}\| u-a_{u}\|_{L^{2}({\mathbb{R}})}.
\end{eqnarray}

Given $\phi \in H^{1}({\mathbb{R}})$, define, for all
$\psi \in H^{1}({\mathbb{R}})$,
\begin{eqnarray*}
I(\psi )&=&\int _{-\infty }^{\infty }(\frac{1}{2}|\psi '|^{2}+
\frac{\gamma }{2}\psi ^{2}-\phi \psi )dx
\end{eqnarray*}

\begin{lem}
\label{coer}
Let $\phi \in H^{1}({\mathbb{R}})$. Then
\begin{eqnarray*}
I(\psi )-I({\mathcal{L}}\phi )&=& \frac{1}{2}\int _{-\infty }^{\infty }\{ (\psi '-({
\mathcal{L}}\phi )')^{2}+\gamma(\psi -{\mathcal{L}}\phi )^{2}\}dx
\end{eqnarray*}
for all $\psi \in H^{1}({\mathbb{R}})$.
\end{lem}
\begin{proof}
It follows from straightforward calculation, by making use of
\begin{eqnarray*}
\int _{-\infty }^{\infty }(({\mathcal{L}}\phi ')^{2}+\gamma ({\mathcal{L}}\phi )^{2}
)dx & = & \int _{-\infty }^{\infty }\phi {\mathcal{L}}\phi dx
\end{eqnarray*}
and
\begin{eqnarray*}
\int _{-\infty }^{\infty }(\psi '{\mathcal{L}}\phi '+\gamma \psi {\mathcal{L}}\phi) dx
&=& \int _{-\infty }^{\infty }\phi \psi dx.
\end{eqnarray*}
\eproof\\
\end{proof}

For $a_{u}\in \{0,u_{+},u^{*},u_{*}\}$ and all $u\in H_{a_{u}}$, define
%
\begin{eqnarray}%
\label{J(u)}
J(u)=\int _{-\infty }^{\infty }L(u,{\mathcal{L}}u,u',{\mathcal{L}}u')dx.
\end{eqnarray}
The next lemma is an immediate consequence of Lemma \ref{coer}.

\begin{lem}
\label{Lu}
Taking $(a_{u},a_{v})$ in $\{(0,0);(u_{+},v_{+});(u^{*},v^{*});(u_{*},v_{*})\}$, if $u\in H_{a_{u}}$ then:%
%
\begin{eqnarray}%
\label{minmax1}
J(u)&=& \max _{v\in H_{a_{v}}}\int _{-\infty }^{\infty }L(u,v,u',v') dx.
\end{eqnarray}
\end{lem}

The following proposition, which was proved in \cite{CKM}, implies that
all the critical values of $J$ on $H_{u^{*}}$ must be positive, since
$d-\frac{1}{\gamma ^{2}}>0$.
%
\begin{prop}
\label{positive}
If $u \in H_{w}$ (with $w=u_{-},\,u_{+},\, u_{*}$ or $u^{*}$) and
$v={\mathcal{L}}u$, then
\begin{equation*}
J(u)=\int ^{\infty }_{-\infty }\left \{  \frac{1}{2}[\left (d-
\frac{1}{\gamma ^{2}}\right )(u')^{2}+\left (v'-\frac{u'}{\gamma }
\right )^{2} +\gamma \left (v-\frac{u}{\gamma }\right )^{2}] +
\frac{1}{4}u^{2}(u-u_{+})^{2} \right \}  dx.
\end{equation*}
\end{prop}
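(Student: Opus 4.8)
The plan is to show that the Lagrangian density $L(u,{\cal L}u)$ and the manifestly nonnegative density on the right differ, pointwise, by an exact derivative, and then to check that the resulting boundary contribution vanishes. Throughout I write $v={\cal L}u$, so that $u=-v''+\gamma v$. The first ingredient is purely algebraic. Since $f(\xi)=\xi(\xi-\beta)(1-\xi)$, one has $F(u):=\int_0^u f(\xi)\,d\xi=-\tfrac14 u^4+\tfrac{1+\beta}{3}u^3-\tfrac{\beta}{2}u^2$, and a direct expansion gives
\[
F(u)+\tfrac14 u^2(u-u_+)^2=\Big(\tfrac{(1+\beta)^2}{9}-\tfrac{\beta}{2}\Big)u^2.
\]
The $u^4$- and $u^3$-coefficients cancel precisely because $u_+=2(\beta+1)/3$, and the standing-front hypothesis $\gamma=9(2\beta^2-5\beta+2)^{-1}$ is exactly what forces the remaining coefficient to equal $\tfrac{1}{2\gamma}$. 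Hence $F(u)+\tfrac14 u^2(u-u_+)^2=\tfrac{1}{2\gamma}u^2$ as a polynomial identity in $u$.

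Next I would expand the target density $P$, collapse its $(u')^2$-terms down to $\tfrac{d}{2}(u')^2$, and subtract $P$ from $L$; inserting the algebraic identity above yields
\[
L(u,v)-P=-(v')^2+2uv-\gamma v^2+\tfrac{1}{\gamma}u'v'-\tfrac{1}{\gamma}u^2.
\]
Now I substitute $u=-v''+\gamma v$ and $u'=-v'''+\gamma v'$. The $(v')^2$-, $v v''$- and $v^2$-contributions cancel identically, and what survives is
\[
L(u,v)-P=-\tfrac{1}{\gamma}\big(v'''v'+(v'')^2\big)=-\tfrac{1}{\gamma}\big(v'v''\big)'.
\]
Thus the two densities differ by a total derivative, so on any finite interval $\int_{-T}^{T}(L-P)\,dx=-\tfrac1\gamma\,[v'v'']_{-T}^{T}$.

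The one point that needs genuine care, and which I expect to be the main obstacle, is the vanishing of this boundary term as $T\to\infty$. One cannot argue by integrating the separate quadratic terms by parts, because for $w=u_+$ (and in the front cases) the individual integrals $\int u^2$ and $\int v^2$ diverge; the exact-derivative route above is exactly what sidesteps this. To kill the boundary term I would set $\tilde v:=v-a_v\in H^1(\bR)$ and use the equation: from $\tilde v''=\gamma\tilde v-\tilde u$ with $\tilde u=u-a_u\in H^1(\bR)$ one gets $\tilde v''\in H^1(\bR)$, hence $\tilde v\in H^3(\bR)$ and $\tilde v'\to 0$ at $\pm\infty$ by Sobolev embedding; meanwhile $v''=\gamma v-u$ is bounded since $u,v$ are bounded. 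For the constant profiles $a_v'\equiv0$, and for the front profiles $a_v=v_*,v^*$ the orbit tends (exponentially, at a saddle-focus) to an equilibrium, so $a_v'\to0$ as well. Therefore $v'\to0$ with $v''$ bounded, giving $v'v''\to0$ at $\pm\infty$, so the boundary term drops and $\int_{-\infty}^{\infty}(L-P)\,dx=0$, which is the asserted identity. \eproof
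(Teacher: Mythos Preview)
Your argument is correct. The paper itself does not prove this proposition---it cites \cite{CKM}---but the closely related half-line identity appears in the proof of Proposition~\ref{lem:unbounded}, and that is the natural point of comparison.

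The underlying ingredients are the same: the polynomial identity $-\int_0^u f(\xi)\,d\xi+\frac{u^2}{2\gamma}=\frac14 u^2(u-u_+)^2$ (which is exactly your $F(u)+\frac14 u^2(u-u_+)^2=\frac{u^2}{2\gamma}$ with the paper's opposite sign convention for $F$), and the constraint $-v''+\gamma v=u$. The organization, however, is genuinely different. The paper (and \cite{CKM}) proceeds by two separate integrations by parts---testing the $v$-equation first against $v$ to replace $\int uv$, then against $u/\gamma$ to handle $\int(uv-u^2/\gamma)$---collecting boundary terms along the way. You instead compute the pointwise difference $L-P$, substitute $u=-v''+\gamma v$ once, and recognize everything collapses to the single exact derivative $-\frac{1}{\gamma}(v'v'')'$. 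This is more economical and has the pleasant feature you point out: it avoids ever writing down the divergent integrals $\int u^2$, $\int v^2$ separately, which the iterated-parts route has to cancel by hand. Your treatment of the boundary term via $\tilde v=v-a_v\in H^3(\bR)$ (from $\tilde v''=\gamma\tilde v-\tilde u\in H^1$) together with the exponential decay of $(v^*)'$, $(v_*)'$ is clean and correct. Both routes arrive at the same place; yours is shorter.
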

If $J(u)\leq C$ then Proposition \ref{positive} gives an upper bound on
$\Vert u'\Vert _{L^2}$, $\Vert ({\mathcal{L}}u)'\Vert _{L^2}$, $\Vert \gamma{\mathcal{L}}u-u\Vert _{L^2}$ and $\Vert {\rm dist}(u,\{0,u_+\})\Vert _{L^2}$. If, in addition, $u$ is a critical point of
$J$, then, invoking \eqref{eq_u}-\eqref{eq_v}, we obtain an
$L^{\infty }$ bound for $(u,{\mathcal{L}} u,u',{\mathcal{L}} u')$:

%
%
\begin{cor}
\label{bounded}
Assume that $u \in H_{w}$ (with $w=u_{-},\,u_{+},\, u_{*}$ or $u^{*}$) is a critical
point of $J$ and that $J(u)\leq C$ for some $C>0$. Then there is a $C'>0 $, depending only
on $C $, such that
\begin{equation*}
\Vert (u-w,{\mathcal{L}} (u-w),u'-w',{\mathcal{L}} (u'-w'))\Vert _{L^{\infty }}\leq C'.
\end{equation*}
Moreover, in the case  $w\in\{u_{-},\,u_{+}\}$ then $\lim_{C\to 0} C'=0$, while if $w\in\{u_{*},u^{*}\}$, the upper bound $C$ cannot approach zero.
\end{cor}
The choice of parameters made in this paper implies that the Hamiltonian system (HS) given by \eqref{Hm} has the same linearization at
$z_{-}=(u_{-},v_{-},0,0)$ and $z_{+}=(u_{+},v_{+},0,0)$. Moreover these two points are equilibria of saddle-focus type. In other words, the linearized Hamiltonian system at $z_{\pm }$ possesses four complex eigenvalues $\mu e^{i\varpi},\mu e^{-i\varpi},-\mu e^{i\varpi},-\mu e^{-i\varpi}$ with $\mu>0$ and $\varpi\in (0,\frac{\pi}{2})$. Now, we may consider the corresponding linearization of the Lagrangian system \eqref{eq_u}-\eqref{eq_v} at $(u_-,v_-)$ or $(u_+,v_+)$. This linearization takes the form $h''=Bh$ , where $h(x)$ is a column vector with two components and $B$ is a square matrix of order 2 having two simple eigenvalues: $\mu^2e^{2i\varpi}$ and $\mu^2e^{-2i\varpi}$. As a consequence, there exists a change-of-basis matrix
$P$ such that, taking
$Y_{\pm }=P(u-u_{\pm }, v-v_{\pm })^{\mathrm{T}}$, \eqref{eq_u}-\eqref{eq_v}
becomes $Y_{\pm }''=\mu ^{2}R_{2\varpi } Y_{\pm }+O(\vert Y_{\pm }\vert ^{2})$. Here,
$R_{2\varpi }$ is the matrix of a rotation with angle $2\varpi $ in
${\mathbb{R}}^{2}$ and the notation $\mathrm{T}$ means transposition. In the sequel we denote $\lambda$ and $\omega$ the real and imaginary parts of $\mu e^{i\varpi}$.\bigskip

With $z_{\pm }:=(u_{\pm },v_{\pm },0,0)$ being hyperbolic equilibria, a non-constant
solution of \eqref{eq_u}-\eqref{eq_v} cannot stay near them for all values
of $x$. This together with Corollary \ref{bounded} gives a positive
lower bound $\ell $ for the value of $J$ at non-constant critical points. Let
$\vert \cdot \vert $ be the euclidean norm in ${\mathbb{R}}^{2}$. We list below
a number of properties for the Hamiltonian system (HS).\newpage

\begin{prop}
\label{cor:ell}
There exist positive numbers $\rho _{0}$ and $\ell $ such that the
following properties hold.
\begin{itemize}
\item[(i)] Suppose $z(x)=(u(x),v(x),p(x),q(x))$ is a non-constant maximal solution of (HS). Then:\medskip

-If $\vert P(u(x)-u_{\pm },v(x)-v_{\pm })^{\mathrm{T}}\vert <\rho _{0}$ for all
$x\leq 0$, then $z(0)\in W^{u}(z_{\pm })$, and there is an $a_{1} > 0$ such
that
$\vert P(u(a_{1})-u_{\pm },v(a_{1})-v_{\pm })^{\mathrm{T}}\vert =\rho _{0}\,$ and
%
\begin{eqnarray}
\label{increase}
\frac{d}{dx}\vert P(u(x)-u_{\pm },v(x)-v_{\pm })^{\mathrm{T}}\vert \geq
\frac{\lambda }{2}\vert P(u(x)-u_{\pm },v(x)-v_{\pm })^{\mathrm{T}}\vert
\;
\end{eqnarray}
for all $ x \leq a_{1}$.\medskip

-If $\vert P(u(x)-u_{\pm },v(x)-v_{\pm })^{\mathrm{T}}\vert <\rho _{0}$ for all
$x\geq 0$, then $z(0)\in W^{s}(z_{\pm })$, and there is an $a_{2} < 0$ such
that
$\vert P(u(a_{2})-u_{\pm },v(a_{2})-v_{\pm })^{\mathrm{T}}\vert =\rho _{0}\,$ and
%
\begin{eqnarray}
\label{decrease}
\frac{d}{dx}\vert P(u(x)-u_{\pm },v(x)-v_{\pm })^{\mathrm{T}}\vert \leq -
\frac{\lambda }{2}\vert P(u(x)-u_{\pm },v(x)-v_{\pm })^{\mathrm{T}}\vert
\;
\end{eqnarray}
for all $ x \geq a_{2}$.
\item[(ii)] Let $u_{c} $ be a non-constant critical point of $J$ and
$u_{s_{\pm }}=\lim _{x\to \pm \infty } u_{c}(x)$, with
$s_{\pm }\in \{-,+\}$. If $0<\rho\leq\rho _{0}\,,$ there exist
$x_{1}<x_{2}$ such that for any $x\in (-\infty ,x_{1})$,
$P(u_{c},{\mathcal{L}}u_{c})^{\mathrm{T}}(x)$ lies in the open disk
$D_{-}$ of center $P(u_{s_{-}},v_{s_{-}})^{\mathrm{T}}$ with radius
$\rho $ and $P(u_{c},{\mathcal{L}}u_{c})^{\mathrm{T}}(x_{1})$ sits on the
boundary of $D_{-}$, while for any $x\in (x_{2},\infty )$,
$P(u_{c},{\mathcal{L}}u_{c})^{\mathrm{T}}(x)$ lies in the open disk
$D_{+}$ of center $P(u_{s_{+}},v_{s_{+}})^{\mathrm{T}}$ with radius
$\rho $ and $P(u_{c},{\mathcal{L}}u_{c})^{\mathrm{T}}(x_2)$ sits on the
boundary of $D_{+}$.
\item[(iii)] If $u_{c}$ is a non-constant critical point of $J$ in
$H_{w}$ with $w=u_{-},\,u_{+},\, u_{*}$ or $u^{*}$, then
$J(u_{c})\geq \ell $. Moreover
$\Vert u_{c}-u_{\pm }\Vert _{H^{1}}\geq \ell $ in case $ w=u_{\pm }$.
\end{itemize}
\end{prop}

\begin{proof}
We prove (i) when
$\vert P(u(x)-u_{\pm },v(x)-v_{\pm })^{\mathrm{T}}\vert <\rho _{0}$ for all
$x\leq 0$; the other case immediately follows from time reversal. With
$z_{\pm }=(u_{\pm },v_{\pm },0,0)$ being hyperbolic equilibria, if
$\rho _{0}$ is small enough, then, by the Hartman-Grossman Theorem,
$z(0)$ must lie in the local unstable manifold
$W^{u}_{\mathrm{loc}}(z_{\pm })$, which is an embedded submanifold tangent to
the unstable space $E^{u}(z_{\pm })$, by the Stable Manifold Theorem (see
e.g. \cite{Katok-Hasselblatt}).

Let $\dot{z}(x)=(\dot{u}(x),\dot{v}(x), \dot{p}(x),\dot{q}(x))$ be a solution of the linearization of
(HS) at $z_{\pm }$. If $\dot{z}$ lies in $E^{u}(z_{\pm })$, then $\dot{Y}(x):=P(\dot{u}(x),\dot{v}(x))^{\mathrm{T}}$ satisfies the first-order equation $\dot{Y}'=\mu R_\varpi \dot{Y}$ and this implies that
$\frac{d}{dx}\vert \dot{Y}\vert =
\lambda \vert \dot{Y}\vert $. Then, for the nonlinear flow
of (HS), \eqref{increase} follows for small $\rho _{0}$. Note that in \eqref{increase}, $\frac{\lambda }{2}$ could be replaced by any number
$\lambda '<\lambda $, at the expense of choosing a smaller
$\rho _{0}$ when $\lambda '$ is closer to $\lambda $.
\smallskip

Next we prove (ii). For fixed $0<\rho \leq \rho _{0}$, let
$X\in {\mathbb{R}}$ be such that
$\vert P(u_{c}(x)-u_{s_{-}},{\mathcal{L}}u_{c}(x)-v_{s_{-}})^{\mathrm{T}}
\vert <\rho $ for all $x\leq -X$ and
$z(x):=(u_{c}(x-X),{\mathcal{L}}u_{c}(x-X),du_{c}'(x-X),-{\mathcal{L}}u_{c}'(x-X))$.

Since $z$ satisfies the assumptions of (i) corresponding to the case $z(0)\in W^u(z_{s_{-}})$, we may set
\begin{equation*}
x_{1}=\max \{x\leq a_{1}-X\,:\;\forall y<x\,,\;\vert P(u_{c}(y)-u_{s_{-}},{
\mathcal{L}}u_{c}(y)-v_{s_{-}})^{\mathrm{T}}\vert <\rho \, \},
\end{equation*}
where $a_{1}$ was defined by (i). Clearly $x_{1}$ satisfies the required
properties and $x_{2}$ can be found in a similar way.

Finally, since $P(u_{c}(x_{1}),{\mathcal{L}}u_{c}(x_{1}))^{\mathrm{T}}$ has
reached the boundary of $D_{-} $, Corollary \ref{bounded} implies an estimate of the form $J(u_c)\geq \ell$ for some $\ell>0$.
For the same reason, if $u_c\in H_{u_{\pm }}$, $\Vert u_{c}-u_{\pm }\Vert _{H^{1}}$ cannot be too small:
indeed, the $H^1$ norm controls the $L^\infty$ norm. This proves (iii).
\eproof\\
\end{proof}

Next we analyze the behavior of Palais-Smale sequences of $J$. Recall the
classical notion of Palais-Smale sequence.

\begin{defn}
A sequence $\{u_{m}\}$ in $H_{w}$, with
$w\in \{u\pm , u_{*},u^{*}\}$, is called a Palais-Smale sequence for
$J$ if $\{J(u_{m})\}$ is convergent and $J'(u_{m}) \to 0$ in
$H^{-1}(\mathbb{R})$ as $m \to \infty $.
\end{defn}

Due to translation invariance in $x$, a Palais-Smale sequence does not
necessarily have a convergent subsequence for the $H^{1}(\mathbb{R})$ metric.
However, adapting the arguments of \cite{CKM}, we obtain the following
result in the spirit of the concentration-compactness theory by Pierre-Louis
Lions \cite{Li}.

\begin{prop}
\label{Conc-Comp}
Let $\rho _{0}$ be the number introduced in Proposition \ref{cor:ell}, $w\in \{u_{\pm }, u_{*},u^{*}\}$ and
$u_{s_{-}}:=\lim _{x\to -\infty } w (x)$, $s_{-}\in \{-,+\}$. Suppose that
$\{u_{m}\}$ is a Palais-Smale sequence for $J$ in $H_{w}$ such that
$\lim_{m\to \infty } J(u_{m})>0$. Then there is a
$\rho _{1}\in (0,\rho _{0})$ independent of $\{u_{m}\}$ and such that, for any fixed
$\rho \in (0,\rho _{1})$, the following properties hold.
\smallskip
\begin{itemize}
\item[(i)] For $m$ large enough, there exists $x_{m}\in \mathbb{R}$ such that, for all
$x\in (-\infty ,x_{m})$, $P(u_{m},{\mathcal{L}}u_{m})^{\mathrm{T}}(x)$ lies
in the open disk $D_{-}$ of center
$P(u_{s_{-}},{\mathcal{L}}u_{s_{-}})^{\mathrm{T}}$ with radius $\rho $ and
$P(u_{m},{\mathcal{L}}u_{m})^{\mathrm{T}}(x_{m})$ sits on the boundary of
$D_{-}$. Moreover, after extraction,
$\tilde{u}_{m}:=u_{m}(\cdot +x_{m})$ converges in $H^{1}_{\mathrm{loc}}$ to
a non-constant critical point $u^{(1)}$ of $J$, with
$\lim _{x\to -\infty }u^{(1)}(x)=u_{s_{-}}$, while\break
$u_{s_{+}}:=\lim _{x\to +\infty }u^{(1)}(x)$ may differ from
$\lim _{x\to +\infty } w (x)$.\bigskip
\item[(ii)] If $\;\liminf \Vert \tilde{u}_{m}-u^{(1)}\Vert _{H^{1}}>0$, \textit{i.e.}
the convergence of $\{\tilde{u}_{m}\}$ does not hold for the
$H^{1}(\mathbb{R})$ metric, there exist two sequences
$\underbar{t}_{m}\to \underbar{t} \in \mathbb{R}$ and
$\bar{t}_{m}\to +\infty $ such that for all
$x\in (\underbar{t}_{m},\bar{t}_{m})$,
$P(\tilde{u}_{m},{\mathcal{L}}\tilde{u}_{m})^{\mathrm{T}}(x)$ lies in the
open disk $\tilde{D}$ of center
$P(u_{s_{+}},v_{s_{+}})^{\mathrm{T}}$ with radius $\rho $, while
$P(\tilde{u}_{m},{\mathcal{L}}\tilde{u}_{m})^{\mathrm{T}}(\underbar{t}_{m})$
and
$P(\tilde{u}_{m},{\mathcal{L}}\tilde{u}_{m})^{\mathrm{T}}(\bar{t}_{m})$ both
sit on the boundary of $\tilde{D}$. Furthermore along a subsequence,
$\{\tilde{u}_{m}(\cdot +\bar{t}_{m})\}$ converges in
$H^{1}_{\mathrm{loc}}$ to a non-constant critical point $u^{(2)}$ of $J$. Then
$u_{s_{+}}=\lim _{x\to +\infty }u^{(1)}(x)=\lim _{x\to -\infty }u^{(2)}(x)$. Moreover
$$\ell \leq J(u^{(i)})\leq \lim J(u_{m})-\ell \;\,\hbox{ for}\;\, i\in\{1,2\}\,, \ \hbox{ and }\ \limsup \Vert \tilde{u}_{m}-u^{(1)}\Vert _{H^{1}}\geq \ell \,.$$
\end{itemize}
\end{prop}
\begin{proof}
Since $f(\xi ) +\beta \xi = o(\xi )$ as $\xi \to 0$, there exists
$\rho _{1}\in (0,\rho _{0})$ such that
$\vert P(u,v)^{\mathrm{T}}\vert\leq \rho_1$ implies
$-f(u)u \geq \frac{\beta u^{2}}{2} $. Then, by symmetry of the
Lagrangian with respect to $(u_+/2,v_+/2)$,
$\vert P(u-u_{+},v-v_{+})^{\mathrm{T}} \vert \leq \rho _{1} $ implies
$(v_+-f(u))(u-u_{+}) \geq \frac{\beta (u-u_{+})^{2}}{2} $.
\smallskip

To show the existence of $x_{m}$ for $m$ large enough, we assume by contradiction that for any integer $m_0$, one can find $m\geq m_0$ such that
for all $x\in {\mathbb{R}}$,
$P(u_{m},{\mathcal{L}}u_{m})^{\mathrm{T}}(x)$ lies in $D_{-}$. This is possible
only if $w=u_{s_{-}}$. Taking $\rho_1$ small enough, one has ${\rm dist}(P^{-1}D_-,(u_{-s_-},v_{-s_-})^{\mathrm{T}})>0$, so the boundedness of $\{J(u_m)\}$ together with Proposition \ref{positive} implies that
$$\Vert u_{m}-u_{s_-}\Vert _{H^{1}}+\Vert {\mathcal{L}} u_{m}-v_{s_-}\Vert _{H^{1}}=O(1)\;.$$
Now, by definition of $\mathcal{L}\,$, $\hat{e}_{m}:=-({\mathcal{L}}u_{m})''+\gamma {\mathcal{L}}u_{m}-u_{m}=0$. Since $J'(u_m)\to 0$, one also has
$e_{m}:=-d u_{m}''-f(u_{m})+{\mathcal{L}}u_{m}\to 0$ in $H^{-1}$. Combining these estimates, one gets
%
\begin{eqnarray}
\label{em1}
\epsilon _{m}:=<e_{m}, u_{m}-u_{s_{-}}> + <\hat{e}_{m},{\mathcal{L}}u_{m}-v_{s_{-}}>
= o(1).
\end{eqnarray}
On the other hand, by direct calculation,
%
\begin{eqnarray}
\label{em2}
\epsilon _{m}=\int _{-\infty }^{\infty }\{d(u_{m}')^{2}+({\mathcal{L}}u_{m}')^{2}+
\gamma ({\mathcal{L}}u_{m}-v_{s_{-}})^{2}+(v_{s_{-}}-f(u_{m}))(u_{m}-u_{s_{-}}) \}.
\end{eqnarray}
Since we assumed that $P(u_{m},{\mathcal{L}}u_{m})^{\mathrm{T}}(x)$ lies in $D_{-}$ for all
$x\in {\mathbb{R}}$, invoking \eqref{em1}-\eqref{em2} yields
\begin{equation*}
\int _{-\infty }^{\infty }\{d(u_{m}')^{2}+({\mathcal{L}}u_{m}')^{2}+\gamma ({
\mathcal{L}}u_{m}-v_{s_{-}})^{2}+\frac{\beta }{2} (u_{m}-u_{s_{-}})^{2}\}
\leq o(1)\;.
\end{equation*}
This implies that as $m_0\to\infty$, $\Vert u_{m}-u_{s_{-}}\Vert _{H^{1}}\to 0$ and consequently
$J(u_{m})\to 0$, which violates the assumption $\liminf _{m\to \infty } J(u_{m})>0$.
We have thus proved the existence of $x_m$ for $m$ large.%

Next, set $\tilde{u}_{m}:=u_{m}(\cdot +x_{m}) $. Since
$J(\tilde{u}_{m})$ is bounded and $J'(\tilde{u}_{m})\to 0$, using an argument
from \cite{CKM} yields a critical point $u^{(1)}$ of $J$ and a subsequence,
still denoted by $\{\tilde{u}_{m}\}$, such that $\{(\tilde{u}_{m},{\mathcal{L}}\tilde{u}_{m})\}$ converges to $(u^{(1)},{\mathcal{L}} u^{(1)})$ in
$H^{1}_{\mathrm{loc}}(\mathbb{R})$. Note that $u^{(1)}$ satisfies
$\vert P(u^{(1)}(x)-u_{s_{-}},{\mathcal{L}}u^{(1)}(x)-v_{s_{-}})^{
\mathrm{T}}\vert \leq \rho$ for $x\in (-\infty ,0)$ and
$\vert P(u^{(1)}(0)-u_{s_{-}},{\mathcal{L}}u^{(1)}(0)-v_{s_{-}})^{
\mathrm{T}}\vert = \rho$. By Proposition \ref{cor:ell}(i), it is clear
that $(u^{(1)},{\mathcal{L}}u^{(1)})(x)$ converges to
$(u_{s_{-}},v_{s_{-}})$ as $x\to -\infty $.

Assuming that the convergence of $\tilde{u}_{m}$ to $u^{(1)}$ does not
hold for the $H^{1}(\mathbb{R})$ metric, we now prove (ii). Let
$u_{s_{+}}:=\lim _{x\to +\infty }u^{(1)}(x)$ $(s_{+}\in \{-,+\})$. Let
$b_{1}>0$ be such that
%
\begin{eqnarray}
\label{x>b1}
\vert P(u^{(1)}(x)-u_{s_{+}},{\mathcal{L}}u^{(1)}(x)-v_{s_{+}})^{
\mathrm{T}}\vert < \rho /2~ for ~all~ x\geq b_{1}.
\end{eqnarray}
By Proposition \ref{cor:ell}(ii), there is $b_{2} < b_{1}$ such that
$\vert P(u^{(1)}(b_{2})-u_{s_{+}},{\mathcal{L}}u^{(1)}(b_{2})-v_{s_{+}})^{
\mathrm{T}}\vert =\rho _{0}$. Since $\rho _{0}>\rho _{1}>\rho $ and
$\{(\tilde{u}_{m},{\mathcal{L}}\tilde{u}_{m})\}$ uniformly converges to
$(u^{(1)},{\mathcal{L}}u^{(1)})$ on $[b_{2},b_{1}]$, it follows that
$\vert P(\tilde{u}_{m}(b_{2})-u_{s_{+}},{\mathcal{L}}\tilde{u}_{m}(b_{2})-v_{s_{+}})^{
\mathrm{T}}\vert >\rho $ and
$\vert P(\tilde{u}_{m}(b_{1})-u_{s_{+}}, {\mathcal{L}}\tilde{u}_{m}(b_{1})-v_{s_{+}})^{
\mathrm{T}}\vert <\rho $ for $m$ large enough. Set
\begin{equation*}
\underbar{t}_{m}:= \max \{x\leq b_{1}\,:\; \vert P(\tilde{u}_{m}(x)-u_{s_{+}},
{\mathcal{L}}\tilde{u}_{m}(x)-v_{s_{+}})^{\mathrm{T}}\vert \geq \rho \}\,.
\end{equation*}
Then $b_{2} < \underbar{t}_{m} < b_{1}$ and we may assert that
$\underbar{t}_{m}\to \underbar{t}$ after extracting a subsequence. By uniform
convergence of $\{(\tilde{u}_{m},{\mathcal{L}}\tilde{u}_{m})\}$ on compact sets,
we conclude that
$\vert P(u^{(1)}(\underbar{t})-u_{s_{+}},{\mathcal{L}}u^{(1)}(
\underbar{t})-v_{s_{+}})^{\mathrm{T}}\vert =\rho $ and
$\vert P(u^{(1)}(x)-u_{s_{+}},{\mathcal{L}}u^{(1)}(x)-v_{s_{+}})^{
\mathrm{T}}\vert \leq \rho $ for all $x\geq \underbar{t}$.

Set $u^{(2)}_{m}(x):=\tilde{u}_{m}(x)-u^{(1)}(x)+u_{s_{+}}$. Then $\lim_{x\to -\infty}u^{(2)}_{m}(x)=u_{s_{+}}$,
$\{u^{(2)}_{m}\}$ is a Palais-Smale sequence for $J$, and
$\liminf J(u^{(2)}_{m})>0$ follows from the fact that
$\{\tilde{u}_{m}\}$ fails to converge for the $H^{1}(\mathbb{R})$ metric. Let
$\hat{D}$ be the disk of center
$P(u_{s_{+}},v_{s_{+}})^{\mathrm{T}}$ with radius
$(\rho _{1}+\rho )/2$. Replacing $u_m$ by $u^{(2)}_{m}$ and $D_-$ by $\hat{D}$ in the contradiction argument given above, we find that for $m$ sufficiently
large $P(u^{(2)}_{m},{\mathcal{L}}u^{(2)}_{m})^{\mathrm{T}}(x)$ has to exit
from $\hat{D}$. But $(u^{(2)}_{m},{\mathcal{L}}u^{(2)}_{m})$ converges uniformly to
$(u_{s_{+}},{\mathcal{L}}u_{s_{+}})$ on any bounded interval, so if we denote $x^{(2)}_{m}$
the first exit date, we have $\lim_{m\to\infty}x^{(2)}_{m}=+\infty$. As a consequence, 
$P(\tilde{u}_{m},{\mathcal{L}}\tilde{u}_{m})^{\mathrm{T}}(x)$ cannot stay in $\tilde{D}$ for all the values of $x$ in the interval $[b_1,\infty)$, and the first exit date
$$\bar{t}_{m}:=\min \{x\geq b_1\,:\; \vert P(\tilde{u}_{m}(x)-u_{s_{+}},
{\mathcal{L}}\tilde{u}_{m}(x)-v_{s_{+}})^{\mathrm{T}}\vert \geq \rho \}$$
satisfies
$\bar{t}_{m}\to \infty $.

Then,
along a subsequence, $\{u^{(2)}_{m}(\cdot +\bar{t}_{m})\}$ converges in
$H^{1}_{\mathrm{loc}}(\mathbb{R})$ to a non-constant critical point $u^{(2)}$. By
Proposition \ref{cor:ell}(i),
$\lim _{x\to -\infty } (u^{(2)},{\mathcal{L}}u^{(2)})(x) = (u_{s_{+}},v_{s_{+}})$,
since $P(u^{(2)},{\mathcal{L}}u^{(2)})^{\mathrm{T}}(x)$ stays in the closed
disk of center $P(u_{s_{+}},v_{s_{+}})^{\mathrm{T}}$ with radius
$\rho $ for all $x \leq 0$. This completes the proof of (ii).

By the construction of $u^{(2)}_{m}$,
$J(u^{(1)})+\lim J(u^{(2)}_{m})= \lim J(u_{m})$ and\break $J(u^{(2)})\leq \lim J(u^{(2)}_{m})\,,$ so
$J(u^{(1)})+J(u^{(2)})\leq \lim J(u_{m})$. We also have $\Vert u^{(2)}\Vert _{H^{1}}\leq \limsup \Vert \tilde{u}_{m}-u^{(1)}\Vert _{H^{1}}\,.$ Combining these inequalities with Proposition \ref{cor:ell}(iii), we conclude that
$\ell \leq J(u^{(i)})\leq \lim J(u_{m})-\ell $ for $i=1,2$ and $\limsup \Vert \tilde{u}_{m}-u^{(1)}\Vert _{H^{1}}\geq \ell\,$. This ends the proof of Proposition \ref{Conc-Comp}.
\eproof\\
\end{proof}

A consequence of Proposition \ref{Conc-Comp} is the following local Palais-Smale
compactness property:
%
\begin{cor}
\label{local-PS}
Let $\ell $ be the number as defined in Proposition \ref{cor:ell}. Suppose
a Palais-Smale sequence satisfies
$\Vert u_{m}-u_{n}\Vert _{H^{1}}\leq \ell /2$ for all $m,n$, then it has
a convergent subsequence for the $H^{1}(\mathbb{R})$ metric.
\end{cor}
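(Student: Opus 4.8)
The plan is to argue by contradiction, feeding the concentration-compactness alternative of Proposition \ref{Conc-Comp} into the diameter hypothesis and exploiting that the number $\ell$ bounds from below not only the action but also the $H^1$-size of any nontrivial critical point (Proposition \ref{cor:ell}(iii)). Since $\|u_m-u_n\|_{H^1}\le \ell/2$, the sequence $\{u_m\}$ is bounded in $H^1$, so after extraction $u_m$ has a weak $H^1$-limit $u_\infty\in H_w$; the Palais--Smale property together with the local compactness argument of \cite{CKM} upgrades this to $u_m\to u_\infty$ in $H^1_{\rm loc}$, with $u_\infty$ a critical point of $J$. For fixed $m$, the weak convergence $u_m-u_n\rightharpoonup u_m-u_\infty$ and lower semicontinuity of the norm then give the crucial upper bound $\|u_m-u_\infty\|_{H^1}\le \liminf_n\|u_m-u_n\|_{H^1}\le \ell/2$.

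I would first discard the degenerate case $\liminf_m J(u_m)=0$: this forces $w\in\{u_-,u_+\}$, since on the heteroclinic classes $H_{u^*},H_{u_*}$ the action has a positive infimum (attained at $u^*,u_*$, by Theorem \ref{thm:exist}); then, along a subsequence, every nonnegative term in the representation of Proposition \ref{positive} tends to zero, so $u_m\to w$ in $H^1(\bR)$ and the conclusion holds. Thus I may assume $\liminf_m J(u_m)>0$, the standing hypothesis of Proposition \ref{Conc-Comp}. Next I would show $u_\infty$ is non-constant. If instead $u_\infty=w=u_\pm$, then the upper bound reads $\|u_m-u_\pm\|_{H^1}\le \ell/2$, while Proposition \ref{Conc-Comp}(i) produces shifts $x_m$ with $\tilde u_m:=u_m(\cdot-x_m)\to u^{(1)}$ in $H^1_{\rm loc}$ for a non-constant critical point $u^{(1)}$ of left limit $u_\pm$. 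As $u_\pm$ is constant and translations are $H^1$-isometries, $\|\tilde u_m-u_\pm\|_{H^1}=\|u_m-u_\pm\|_{H^1}\le \ell/2$; passing to the weak limit gives $\|u^{(1)}-u_\pm\|_{H^1}\le \ell/2$, whereas Proposition \ref{cor:ell}(iii) forces $\|u^{(1)}-u_\pm\|_{H^1}\ge \ell$ (indeed $+\infty$ if $u^{(1)}$ is heteroclinic, for then $u^{(1)}-u_\pm\notin L^2$). This contradiction shows $u_\infty$ is non-constant.

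Being a non-constant critical point, $u_\infty$ leaves the $\rho$-disk about its left equilibrium at a finite point (Proposition \ref{cor:ell}(ii)); using the transversality estimate \eqref{increase} (whose monotonicity rules out an exit escaping to $-\infty$) together with the $H^1_{\rm loc}$-convergence, the exit points $x_m$ of Proposition \ref{Conc-Comp}(i) then converge, so $u^{(1)}$ and $u_\infty$ are translates of one another and, in particular, $u_\infty$ has the same right limit $u_{s_+}$ as $w$. Now suppose, for contradiction, that $u_m\not\to u_\infty$ in $H^1(\bR)$, equivalently $\liminf_m\|\tilde u_m-u^{(1)}\|_{H^1}>0$. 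Proposition \ref{Conc-Comp}(ii) then produces a second non-constant critical point $u^{(2)}$, of left limit $u_{s_+}$, as the $H^1_{\rm loc}$-limit of $u^{(2)}_m(\cdot-\bar t_m)$, where $u^{(2)}_m=\tilde u_m-u^{(1)}+u_{s_+}$ and $\bar t_m\to+\infty$. Since $u_{s_+}$ is constant, $\|\tilde u_m-u^{(1)}\|_{H^1}=\|u^{(2)}_m(\cdot-\bar t_m)-u_{s_+}\|_{H^1}$, so lower semicontinuity together with Proposition \ref{cor:ell}(iii) gives $\liminf_m\|\tilde u_m-u^{(1)}\|_{H^1}\ge \|u^{(2)}-u_{s_+}\|_{H^1}\ge \ell$. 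On the other hand, $x_m\to x_\infty$ and the $H^1$-continuity of translation yield $\|\tilde u_m-u^{(1)}\|_{H^1}=\|u_m-u^{(1)}(\cdot+x_m)\|_{H^1}\le \|u_m-u_\infty\|_{H^1}+o(1)\le \ell/2+o(1)$. These two estimates are incompatible, so $u_m\to u_\infty$ in $H^1(\bR)$, as claimed.

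The hard part is conceptual rather than computational. Proposition \ref{Conc-Comp} is a statement about splitting of the action, whereas the hypothesis controls an $H^1$-distance, and the bridge between the two is precisely the dual lower bound of Proposition \ref{cor:ell}(iii): the same $\ell$ bounds below both the action and the $H^1$-distance-to-equilibrium of any nontrivial critical point. This is what converts ``an extra bump of action $\ge \ell$ escapes to infinity'' into ``$\{u_m\}$ has $H^1$-diameter $\ge \ell$'', contradicting the hypothesis $\le \ell/2$. The one genuine technical point, dealt with through \eqref{increase}, is to anchor the first concentration at a bounded location so that the unshifted limit $u_\infty$ and the Proposition \ref{Conc-Comp} profile $u^{(1)}$ are honest translates, keeping all the relevant differences in $H^1(\bR)$.
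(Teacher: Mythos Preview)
Your proof is correct and follows essentially the same strategy as the paper's: both extract profiles via Proposition~\ref{Conc-Comp} and obtain the contradiction from the lower bound $\Vert u^{(k)}-u_{s_\pm}\Vert_{H^1}\ge\ell$ of Proposition~\ref{cor:ell}(iii); your organization (fix the weak limit $u_\infty$ first, show it is non-constant, then force $x_m$ to stay bounded) is a tidier variant of the paper's dichotomy on whether $\{x_m\}$ has a convergent subsequence, and you also make explicit the degenerate case $\liminf J(u_m)=0$ that the paper leaves implicit. One minor imprecision: your appeal to \eqref{increase} to exclude $x_m\to-\infty$ is not quite apt, since $(u_m,{\cal L}u_m)$ is not a trajectory of (HS); the clean fix is simply to re-run your own Step~4 argument, replacing the constant $u_\pm$ by $u_\infty(\cdot-x_m)\to u_{s_-}$ in $H^1_{\rm loc}$, which again yields $\Vert u^{(1)}-u_{s_-}\Vert_{H^1}\le\ell/2$ and the same contradiction.
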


\begin{proof}
The corollary will be proved by contradiction. The idea behind the proof is the following: a Palais-Smale sequence $\{u_m\}$ is always precompact for the $H^1_{\rm loc}$ topology. If it is not precompact for the global $H^1(\mathbb{R})$ metric, then some ``mass" has to escape at infinity. But Proposition \ref{cor:ell} and Proposition \ref{Conc-Comp} imply that this ``mass" has an $H^1$ norm at least equal to $\ell$, so the inequality $\Vert u_{m}-u_{n}\Vert _{H^{1}}\leq \ell /2$ cannot hold for all $m,n$.

More precisely, for the Palais-Smale sequence $\{u_{m}\}$, we pick out the point
$x_{m}$ from each $u_{m}$ as in Proposition \ref{Conc-Comp}(i) and we consider a non-constant critical point $u^{(1)}$ of
$J$ that is the limit (after extraction if necessary) of $\{\tilde{u}_{m}\}:=\{{u}_{m}(\cdot +x_{m})\}$ for the
$H^{1}_{\mathrm{loc}}(\mathbb{R})$ topology. Let us assume that $\{u_{m}\}$ has no convergent subsequence in the $H^1$ metric. Then three cases may occur:

If
$\{x_{m}\}$ has a convergent subsequence of limit $\bar{x}$, then the convergence of $\{\tilde{u}_{m}\}$ to $u^{(1)}$ fails in the $H^{1}(\mathbb{R})$ metric. Hence
$\liminf \Vert \tilde{u}_{m}-u^{(1)}\Vert _{H^{1}}>0$. Applying Proposition \ref{Conc-Comp}(ii), we conclude that
$\limsup \Vert {u}_{m}-u^{(1)}(\cdot-\bar{x})\Vert _{H^{1}}\geq \ell \,.$

Next we turn to the case when, after extraction, $\lim x_{m}=+\infty\,$.
Taking the same definition of the constant $u_{s_-}$ as in Proposition \ref{Conc-Comp}, we see that $\{{u}_{m}\}$ converges to $u_{s_-}$ in $H^{1}_{\mathrm{loc}}(\mathbb{R})$,
and
$\limsup _{m\to \infty } \Vert u_{m}-u_{s_-}\Vert _{H^{1}}\geq \Vert u^{(1)}-u_{s_-}\Vert _{H^{1}}\geq \ell$.

The third case is when, after extraction, $\lim x_{m}=-\infty\,$.
After a new extraction if necessary,
$\{{u}_{m}\}$ converges to a critical point $u$ of $J$, which can be constant or non-constant. Let $u_s:=\lim_{x\to -\infty}u(x)$, with $s\in\{-,+\}$. Then $\limsup _{m\to \infty } \Vert u_{m}-u\Vert _{H^{1}}\geq \Vert u^{(1)}-u_{s}\Vert _{H^{1}}\geq \ell$.

In each of the three cases, we have found, after extraction, that $\{{u}_{m}\}$ converges to a limit $u$ in $H^{1}_{\mathrm{loc}}(\mathbb{R})$, but $\limsup _{m\to \infty } \Vert u_{m}-u\Vert _{H^{1}(\mathbb{R})}\geq \ell$. Now, let $m_0$ be such that $\Vert u_{m_0}-u\Vert _{H^{1}(\mathbb{R})}\geq 3\ell/4$ and let $R_0$ be such that $\Vert u_{m_0}-u\Vert _{H^{1}(-R_0,R_0)}\geq 2\ell/3$.
By local convergence, for $n$ large enough we have $\Vert u_{n}-u\Vert _{H^{1}(-R_0,R_0)}< \ell/6$, hence
$\Vert u_{m_0}-u_n\Vert _{H^{1}(-R_0,R_0)} > \ell/2$. This contradicts the assumption of the corollary.
\eproof\\
\end{proof}

In the construction of multi-front solutions, the trajectories between
two fronts will need to be in good control. Such trajectories are very
close to one of the two stable equilibria with asymptotical behavior being
dominated by the linearized equations. Recall from (\ref{Eg}) that if
$(u,v)$ is a solution of (\ref{eq_u})-(\ref{eq_v}) then
$E(u',v',u,v)$ is constant along the trajectory. In particular, for any
critical point $u$ of $J$, the energy $E(u',{\mathcal{L}}u',u,{\mathcal{L}}u)$ is
identically zero. We now state a lemma in the same spirit as Lemmas 3.1,
A.1, A.2 and A.3 of \cite{BS}.
%
\begin{lem}
\label{cos}
Let $\nu \in (0,\frac{\pi }{2\omega })$ and $\delta $ be small positive numbers.
For $\eta$ and $\zeta$ in ${\mathbb{R}}^{2}$ let us denote $(r_{\eta }, \theta _{\eta })$ and
$(r_{\zeta },\theta _{\zeta })$ the polar coordinates of
$P(\eta _{1}-u_{+},\eta _{2}-v_{+})^{\mathrm{T}}$ and
$P(\zeta _{1}-u_{+},\zeta _{2}-v_{+})^{\mathrm{T}}$, respectively. 
There exists a small radius $\bar{r}>0$ such that, if $r_{\eta }$ and $r_{\zeta }$ are smaller than $\bar{r}$,
then the boundary value problem
\begin{eqnarray}
&& -d u'' = f(u) -v,
\label{3.3a}
\\
&& - v'' = u-\gamma v,
\label{3.3b}
\\
&& (u(0),v(0))=(\eta _{1},\eta _{2}),~~ (u(T),v(T))=(\zeta _{1},
\zeta _{2}),
\label{3.3c}
\end{eqnarray}
has a unique solution in a small neighborhood of $(u_{+},v_{+})\,$. Denoting this solution by
$(\bar{U},\bar{V})_{T,\eta _{1},\eta _{2},\zeta _{1},\zeta _{2}}(\cdot )$,
the following estimate holds on the interval $(0,T)\,$:
$$\vert P((\bar{U},\bar{V})_{T,\eta _{1},\eta _{2},\zeta _{1},\zeta _{2}}(x)-(u_{+},v_{+}))^{\mathrm{T}}\vert<\min\{r_{\eta },r_{\zeta }\}\,.$$

Moreover, if $E_{\eta _{1},\eta _{2},\zeta _{1},\zeta _{2}}(T)$ denotes the
energy of the solution
$(\bar{U},\bar{V})_{T,\eta _{1},\eta _{2},\zeta _{1},\zeta _{2}}$, then the
following properties hold (recalling that the four eigenvalues
of the linearization of (HS) at the saddle-focus equilibrium $(u_{+},v_{+},0,0)\,$ have $\pm \omega$ as imaginary parts):
\begin{itemize}
\item [(i)] For $T$ large enough,
$E_{\eta _{1},\eta _{2},\zeta _{1},\zeta _{2}}(T)>0$ if
$\cos (\theta _{\zeta }-\theta _{\eta }-T\omega +\varphi _{+})>\delta \,,$ while\break
$E_{\eta _{1},\eta _{2},\zeta _{1},\zeta _{2}}(T)<0$ if
$\cos (\theta _{\zeta }-\theta _{\eta }-T\omega +\varphi _{+})<-\delta $. Here,
$\varphi _{+}$ is a phase independent of the parameters.
\item[(ii)] There is a real number $\kappa _{+}$ and, for each
$r\leq \bar{r}/2$, a smaller radius $\epsilon (r)$ proportional to
$r$, such that, if
$\vert P(u^{*}(z)-u_{+},v^{*}(z)-v_{+})^{\mathrm{T}}\vert =r$,
$\vert (\eta _{1}-u^{*}(z),\eta _{2}-v^{*}(z))\vert <\epsilon $,\break
$\vert (\zeta _{1}-u^{*}(z),\zeta _{2}-v^{*}(z))\vert <\epsilon $ and
$n\geq 1/\epsilon $ with $n$ an integer, then
\begin{equation*}
E_{\eta _{1},\eta _{2},\zeta _{1},\zeta _{2}}(\kappa _{+}-2z+2\pi
n/\omega -\nu )>0,
\end{equation*}
\begin{equation*}
E_{\eta _{1},\eta _{2},\zeta _{1},\zeta _{2}}(\kappa _{+}-2z+2\pi
n/\omega +\nu )<0.
\end{equation*}
\end{itemize}
Similar assertions hold with the same $\bar{r}$ and $\epsilon (r)$ when replacing $(u_{+},v_{+})$ by
$(u_{-},v_{-})$, $(u^{*},v^{*})$ by
$(u_{*},v_{*})$ and $\varphi _{+},\kappa _{+}$ by possibly different phases
$\varphi _{-},\kappa _{-}$.

\end{lem}

We refer to \cite{BS} where a very similar statement is proved;
there the existence and local uniqueness
of $(\bar{U},\bar{V})$ follow from Lemma A.3. The sign property (i) of the
energy is a consequence of Lemma A.2, and see Lemma~3.1 for the detail.
Finally, (ii) is a consequence of (i). Note that the formula for the sign
of $E$ in (i) or (ii) is correct for a suitable choice of $P$, for
other choices the sign in front of $T\omega $ has to be changed.

\section{
Isolated 
critical points} \label{iso}
\setcounter{equation}{0}
The aim of this section is to show that all critical points of $J$ are
isolated, up to translations in $x$. As already mentioned in the introduction,
for the construction of multi-front solutions we would just need to know
that the basic heteroclinic $u^{*}$ is isolated up to translations, as
in \cite{BS}. Showing that all critical points are isolated is considerably
more difficult, however this stronger property will be needed for working
out the stability analysis in the last part of this paper.

\begin{prop}
\label{prop:isolated}
Any critical point $u_{c}$ of $J$ is an isolated critical point for the
$H^{1}(\mathbb{R})$ metric, up to translations in $x$.
\end{prop}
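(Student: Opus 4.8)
The plan is to show that the set of critical points of $J$, modulo the translation action $u\mapsto u(\cdot-s)$, has no accumulation point. I would argue by contradiction: suppose $u_c$ is a critical point that is \emph{not} isolated up to translation. Then there exists a sequence of critical points $\{u_m\}$ of $J$, none equal to a translate of $u_c$, with $\inf_{s\in\bR}\Vert u_m(\cdot-s)-u_c\Vert_{H^1}\to 0$. After translating each $u_m$, we may assume $\Vert u_m-u_c\Vert_{H^1}\to 0$ while $u_m\neq u_c(\cdot-s)$ for any $s$. Since each $u_m$ is a critical point, $\{u_m\}$ is automatically a Palais--Smale sequence with $J(u_m)\to J(u_c)$, and by Proposition~\ref{cor:ell}(iii) we have $J(u_c)\geq\ell>0$. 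The strategy is then to linearize the critical-point equation \eqref{eq_u}-\eqref{eq_v} about $u_c$ and derive a contradiction by controlling the difference $w_m:=u_m-u_c$, whose $H^1$-norm tends to zero.

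The key device I would use is the boundary value problem of Lemma~\ref{cos} together with the energy and asymptotic controls from Proposition~\ref{cor:ell}. First, invoking Corollary~\ref{bounded} and Proposition~\ref{cor:ell}(ii), I would fix a radius $\rho\in(0,\rho_0)$ and locate for both $u_c$ and each $u_m$ the ``entry'' and ``exit'' times $x_1,x_2$ at which the trajectory $P_{s_\pm}(u,\mathcal{L}u)^{\rm T}$ leaves the neighborhood of the equilibria; by the real-analytic dependence established in Proposition~\ref{cor:ell}(ii) these times depend continuously on the trajectory data. On the compact ``core'' interval $[x_1,x_2]$, convergence $u_m\to u_c$ in $H^1_{\rm loc}$ (indeed in $C^1_{\rm loc}$ via the equation and Lemma~\ref{smallness}) forces the two trajectories to be $C^1$-close, hence by the local uniqueness part of Lemma~\ref{cos} the behavior in the two asymptotic tails is rigidly determined by the matching conditions at the endpoints of the core. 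The plan is to show that this rigidity, combined with the fact that the energy $E$ vanishes identically along any critical point, pins down $u_m$ to be an honest translate of $u_c$ for $m$ large, contradicting $u_m\neq u_c(\cdot-s)$.

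Concretely, in each tail the solution must coincide with the locally unique branch $(\bar U,\bar V)_{T,\eta,\zeta}$ of Lemma~\ref{cos} selected by the endpoint data and staying near the equilibrium; the spiraling dynamics of the saddle-focus means that the ``winding'' of the trajectory is quantized, and the constraint $E\equiv 0$ forces the phase to land on a discrete set (this is exactly the content of the sign-of-energy conditions (i)--(ii) in Lemma~\ref{cos}, where $E$ changes sign as the angular parameter crosses the values $\theta_\zeta-\theta_\eta-T\omega+\varphi_\pm\equiv\pm\pi/2 \pmod{2\pi}$). Thus as $u_m\to u_c$ the admissible tail data for $u_m$ can only differ from those of $u_c$ by a translation that realizes one of these quantized phases; since $u_m$ is close to $u_c$, the only nearby admissible choice is the trivial translation, yielding $u_m=u_c(\cdot-s_m)$ with $s_m\to 0$. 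I would close the argument by a Lyapunov--Schmidt reduction along the one-dimensional translation orbit: writing $w_m=s_m\,u_c'+r_m$ with $r_m$ orthogonal to $u_c'$ in $H^1$, the linearized operator $J''(u_c)$ is Fredholm with kernel spanned by $u_c'$ (this uses the saddle-focus hyperbolicity of the endpoints to guarantee the transversal invertibility of $J''(u_c)$ on the complement of $\langle u_c'\rangle$), so $J'(u_m)=0$ together with $\Vert r_m\Vert_{H^1}\to 0$ forces $r_m=0$ for large $m$, giving the sought contradiction.

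The hardest step will be establishing the transversal invertibility of $J''(u_c)$ on the complement of the translation direction, equivalently ruling out that the linearized operator has kernel beyond $\langle u_c'\rangle$. Because the Lagrangian \eqref{La} is strongly indefinite, $J''(u_c)$ is not coercive and the usual elliptic Fredholm alternative does not apply directly; this is precisely where I expect to invoke the nonlocal Lyapunov--Schmidt reduction advertised in the introduction, replacing $v$ by $\mathcal{L}u$ to turn the indefinite problem into one for which Lemma~\ref{coer} restores positivity of the reduced second variation in the $u$-variable. Controlling the tails uniformly in $m$ (so that the core estimate and the quantized-phase argument combine without losing the exponential decay from Proposition~\ref{cor:ell}(i)) is the main technical obstacle, and I would carry it out using the exponential growth/decay estimates \eqref{increase}-\eqref{decrease} to absorb the tail contributions into the small core error.
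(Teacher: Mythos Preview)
Your proposal has a genuine gap at exactly the point you flag as hardest: establishing that the kernel of $J''(u_c)$ is spanned by $u_c'$. This statement is equivalent to the transversality of the stable and unstable manifolds along the orbit $(u_c,{\cal L}u_c)$, and the paper explicitly says it is \emph{unable} to prove such non-degeneracy (see the remark preceding Theorem~\ref{thm:stab}, and the discussion of transversality in the introduction). Your Lyapunov--Schmidt argument collapses without it, and nothing in the preliminaries supplies it: saddle-focus hyperbolicity of the \emph{endpoints} gives exponential dichotomies at $\pm\infty$, which makes the linearized operator Fredholm of index zero, but it says nothing about the kernel beyond the obvious inclusion $u_c'\in\ker J''(u_c)$. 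The ``quantized phase'' alternative you sketch is also misapplied. Lemma~\ref{cos} concerns the finite-interval boundary value problem with \emph{both} endpoints at a fixed positive distance from the equilibrium; there the energy $E_{\eta,\zeta}(T)$ genuinely oscillates in $T$ and vanishes on a discrete set. The tails of a single connecting orbit, however, have no ``other endpoint'': they simply lie on the local stable or unstable manifold, and since the equilibria have energy zero, $E\equiv 0$ holds automatically on the entire manifold. The energy constraint therefore produces no discretization on the tails, and there is no mechanism forcing $u_m$ to be a translate of $u_c$.

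The paper's route is completely different and does not attempt to prove non-degeneracy. It establishes an \emph{alternative} (Lemma~\ref{lem:alternative}): either every critical point is isolated, or the full stable and unstable manifolds of $z_\pm$ are bounded subsets of $\bR^4$. The argument parametrizes orbits on $W^u_{\rm loc}(z_{s_-})$ by an angle $\theta$ on a small circle around the equilibrium, and writes the ``connecting'' condition $\Phi({\bf z}_\theta)\in W^s_{\rm loc}(z_{s_+})$ as the vanishing of two \emph{real-analytic} functions $\chi_1(\theta),\chi_2(\theta)$. If $u_c$ is not isolated, $\theta_c$ is a non-isolated zero, hence $\chi_1\equiv\chi_2\equiv 0$ on an open interval $\cal I$; a continuation argument (using Proposition~\ref{Conc-Comp} at the boundary of the maximal interval, and \emph{there} genuinely invoking the energy-sign oscillation of Lemma~\ref{cos} to show the exit angle sweeps through all values) then forces the entire unstable manifold to consist of bounded connecting orbits with uniformly controlled action. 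Proposition~\ref{lem:unbounded} independently constructs, by a constrained minimization with a free boundary parameter $b$, orbits on these manifolds that reach arbitrarily large points, ruling out boundedness and closing the argument. Real-analyticity of the Hamiltonian is the essential ingredient replacing transversality, and it does not appear in your plan.
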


Proposition \ref{prop:isolated} will follow from two facts. The first one
is an alternative. Its proof is delicate and relies crucially on the results
of Section~\ref{further} and the real-analyticity of the Hamiltonian:
%
\begin{lem}
\label{lem:alternative}
Either the stable and unstable manifolds of the hyperbolic points
$z_-=(u_{-},v_{-},0,0)$ and $z_+=(u_{+},v_{+},0,0)$ are bounded, or every critical
point of $J$ is isolated up to translation in $x$.
\end{lem}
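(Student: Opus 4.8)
The plan is to prove the alternative in its contrapositive form: I will assume that some critical point of $J$ fails to be isolated up to translation in $x$, and deduce that all four invariant manifolds $W^u(z_-)$, $W^s(z_-)$, $W^u(z_+)$, $W^s(z_+)$ are bounded subsets of $\bR^4$. Since the statement is a disjunction, this is enough. So let $u_c$ be a non-isolated critical point, with $u_{s_-}=\lim_{x\to-\infty}u_c$ and $u_{s_+}=\lim_{x\to+\infty}u_c$, so that its orbit $z_c$ lies on $W^u(z_{s_-})\cap W^s(z_{s_+})$. By hypothesis there is a sequence of critical points $u_k\to u_c$ in $H^1(\bR)$, none of which is a translate of $u_c$. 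First I would normalize the translations: using Proposition \ref{cor:ell}(ii) I fix each orbit by requiring that its first exit from the disk $D_-$ of radius $\rho<\rho_0$ about $P_{s_-}(u_{s_-},v_{s_-})^{\rm T}$ occur at $x=0$. By \eqref{increase} this exit is transverse, and by the real-analyticity statement established in the proof of Proposition \ref{cor:ell}(ii) the associated phase point depends analytically on the orbit. Continuous dependence of $z(0)$ on $u$ for the $H^1$ metric then yields phase points $z_k(0)\to z_c(0)$ lying on $W^u(z_{s_-})\cap W^s(z_{s_+})$, and ODE uniqueness forces them to be pairwise distinct, since equal normalized data would give identical orbits.

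Next I would convert this accumulation into a coincidence of manifolds. Because the Hamiltonian is real-analytic, $W^u$ and $W^s$ are analytic immersed surfaces, and I choose a two-dimensional analytic section $\Sigma\subset\{H=0\}$ transverse to the flow at $z_c(0)$. Then $W^u(z_{s_-})\cap\Sigma$ and $W^s(z_{s_+})\cap\Sigma$ are analytic curves whose intersection contains the accumulating sequence $\{z_k(0)\}$; by the identity theorem for real-analytic functions they must share a common analytic arc. Flowing this arc out produces a two-dimensional subset of $W^u(z_{s_-})\cap W^s(z_{s_+})$, and by connectedness and analyticity of the two surfaces I conclude that $W^u(z_{s_-})=W^s(z_{s_+})=:M$, an invariant analytic surface every orbit of which is heteroclinic or homoclinic from $z_{s_-}$ to $z_{s_+}$.

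Then I would prove boundedness of $M$ and propagate it to all four manifolds. Near the saddle-focus $z_{s_-}$ the orbits filling $M=W^u(z_{s_-})$ are parametrized by the departure angle $\theta\in S^1$ on the local unstable manifold. Using the exponential rates of Proposition \ref{cor:ell}(i), which are uniform because $S^1$ is compact and the entry and exit times into the local stable and unstable manifolds depend continuously on $\theta$, the map $(\theta,x)\mapsto z(\theta,x)$ extends continuously to $S^1\times[-\infty,+\infty]$ with constant values $z_{s_-},z_{s_+}$ at the ends; its image $M\cup\{z_-,z_+\}$ is therefore compact, so $M$ is bounded. Finally, the system is reversible under $x\mapsto-x$, lifted to $(u,v,p,q)\mapsto(u,v,-p,-q)$, which interchanges $W^u(z)$ and $W^s(z)$ at each equilibrium, and it admits the central symmetry $S$ about $(u_+/2,v_+/2)$, lifted as $(u,v,p,q)\mapsto(u_+-u,v_+-v,-p,-q)$, which swaps $z_-\leftrightarrow z_+$ while preserving the time direction. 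Both are affine isometries of $\bR^4$ and hence preserve boundedness, and applying them to $M$ exhibits all four invariant manifolds as bounded sets, regardless of the values of $s_-,s_+$.

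The step I expect to be the main obstacle is the middle one: rigorously converting the mere $H^1$-accumulation of critical points into a genuine analytic arc of connecting orbits, and then into global equality of the two invariant manifolds. This hinges on choosing the translation normalization so that the accumulation localizes at a single phase point on a transversal section, on the real-analyticity of the resulting section curves, and on the unique-continuation argument that upgrades a shared analytic arc to equality of the full surfaces. The uniformity required in the concluding compactness argument is a secondary but still nontrivial point.
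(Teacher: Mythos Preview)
Your outline is sound through the local step and the final symmetry reduction, but the gap is exactly where you suspect: the inference ``by connectedness and analyticity of the two surfaces, $W^u(z_{s_-})=W^s(z_{s_+})$'' does not follow. Belonging to $W^s(z_{s_+})$ is not a local analytic condition on a phase point; it only becomes one after flowing forward by some \emph{fixed} time $T$ into the neighborhood ${\cal U}_{s_+}$ where $W^s_{\rm loc}$ is the graph of the analytic map $\psi$. Thus the analytic equations (the paper's $\chi_1,\chi_2$) are defined only for those $\theta$ whose time-$T$ image lands in ${\cal U}_{s_+}$. Adapting $T$ as you move along $S^1$ shows that the set of coincidence angles is \emph{open}, but gives nothing about closedness: if $\theta_k\to\hat\theta$ with each $Z_{\theta_k}$ a connecting orbit, the transit time to ${\cal U}_{s_+}$ may diverge, and the limiting orbit $Z_{\hat\theta}$ need not connect to $z_{s_+}$ at all. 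Your subsequent compactness argument presupposes a continuous, finite entry time into $W^s_{\rm loc}$, which is precisely what fails at such a boundary angle, so it cannot close the gap.

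The paper's proof confronts this boundary scenario directly rather than asserting global coincidence. It takes the maximal open interval ${\cal J}\ni\theta_c$, sets $\hat\theta=\sup{\cal J}$, and splits into two cases. If $\hat\theta=+\infty$, the value $j(\theta)=J(u_\theta)$ is globally constant and Corollary~\ref{bounded} converts this uniform $J$-bound into a uniform $L^\infty$ bound on all $Z_\theta$, so $W^u({\bf z}_{s_-})$ is bounded without any compactness argument. If $\hat\theta<+\infty$, Proposition~\ref{Conc-Comp} shows that $u_{\hat\theta}$ is still a critical point, connecting now to some equilibrium $z_{s_1}$ with $J(u_{\hat\theta})\leq J(u_c)-\ell$; for $\theta\in{\cal J}$ near $\hat\theta$ the orbits $Z_\theta$ enter and exit a small disk about $z_{s_1}$, and the saddle-focus hypothesis is used in an essential way: the zero-energy constraint together with Lemma~\ref{cos} forces the exit angle $\bar\alpha(\theta)$ to diverge as $\theta\to\hat\theta$, so every $\alpha\in[0,2\pi)$ is attained. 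A second application of Proposition~\ref{Conc-Comp} then produces, for each $\alpha$, a critical point $u^{(2)}_\alpha$ with $J(u^{(2)}_\alpha)\leq J(u_c)-\ell$ whose orbits cover $W^u(z_{s_1})$, and Corollary~\ref{bounded} again supplies the $L^\infty$ bound. In neither case is global equality of the invariant surfaces claimed; the uniform $J$-bound plus Corollary~\ref{bounded} replaces your compactness argument entirely.
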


Note that the four manifolds mentioned in Lemma \ref{lem:alternative} are all bounded (resp. unbounded) if at least one
of them is bounded (resp. unbounded), since the Lagrangian
$L(u,v,u',v')$ is invariant under time reversal and under the symmetry
of center $(u_{+}/2,v_{+}/2)$.
\smallskip

The second fact is stated in the next proposition, in which we find trajectories
which are either on the unstable manifold of $(u_{-},v_{-},0,0)$ or on
the unstable manifold of $(u_{+},v_{+},0,0)$, and that reach a point of
arbitrarily large size. This shows that the stable and unstable manifolds
mentioned in Lemma \ref{lem:alternative} are unbounded, and thus Proposition \ref{prop:isolated} is established:%
%
\begin{prop}
\label{lem:unbounded}
For any $b\in \mathbb{R}$, there is a solution $(u,v)$ of \eqref{eq_u}-\eqref{eq_v}
which satisfies $u(0)=\frac{\gamma }{2}v(0)=b$ and one of the following
conditions:
\begin{itemize}
\item[(i)] $\lim _{x\to -\infty }(u(x),v(x))=(u_{-},v_{-})$;
\item[(ii)] $\lim _{x\to -\infty }(u(x),v(x))=(u_{+},v_{+})$.
\end{itemize}
\end{prop}

\begin{proof}
We use an argument similar to the proof of Theorem 1.1 of \cite{CKM}. For
$s \in \{+,-\}$, let $\hat{v}_{s}$ be a $C^{\infty }$-function on $(-\infty,0]$ such that
%
\begin{eqnarray}
\label{hatv}
\hat{v}_{s}(0)=\frac{2b}{\gamma }, -\hat{v}_{s}''(0)+ \gamma \hat{v}_{s}(0)=b
\text{ and }\hat{v}_{s}(x)=v_{s}\text{ if }x\leq -1.
\end{eqnarray}
For convenience in notation, we define
${\mathbf{H}}_{w} = w + H^{1}_{0}{(-\infty ,0)}$. Set
$\hat{u}_{s}= \gamma \hat{v}_{s}-%
\hat{v}_{s}''$. For $w \in {\mathbf{H}}_{\hat{u}_{s}}%
$, let
%
\begin{eqnarray}
{\hat{J}}(w)=\int ^{0}_{-\infty }\left [\frac{d}{2}({w}^{\prime })^{2}+
\frac{1}{2}w{\hat{\mathcal{L}}}w-\int _{0}^{w} f(\xi )d\xi \right ]dx,
\end{eqnarray}
where ${\hat{\mathcal{L}}}w$ denotes the unique solution of
%
\begin{eqnarray}
&&-v''+\gamma v=w, \text{ }\text{ }\text{ }\text{ } v \in {\mathbf{H}}_{
\hat{v}_{s}}.
\end{eqnarray}
The formula of Proposition \ref{positive} has an easy extension to the
present situation, which can be proved in exactly the same way: if
$w \in {\mathbf{H}}_{\hat{u}_{\varepsilon }}$ then
%
\begin{eqnarray}
{\hat{J}}(w)& = & \int _{-\infty }^{0}\frac{1}{2}\left [ \left (d-
\frac{1}{\gamma ^{2}}\right )(w')^{2} + \left (({\hat{\mathcal{L}}}w)'-
\frac{w'}{\gamma }\right )^{2} +\gamma \left ({\hat{\mathcal{L}}}w-
\frac{w}{\gamma }\right )^{2} \right ] dx
\nonumber
\\
& & ~~ + \int _{-\infty }^{0} \frac{1}{4}(w-u_{-})^{2}(w-u_{+})^{2} dx.
\label{positivebis}
\end{eqnarray}
Set $\hat{c}_{s} = \inf _{w \in {\mathbf{H}}_{\hat{u}_{s}}} {\hat{J}}(w)$. We
distinguish two cases:%
\smallskip

\textbf{Case 1: $\hat{c}_{-}\leq \hat{c}_{+}$.} Pick a sequence
$\{u_{m}\} \subset {\mathbf{H}}_{\hat{u}_{-}}$ such that
$\lim _{m \to \infty }{\hat{J}}(u_{m})=\hat{c}_{-}$. With a priori bounds
on $u_{m}$ derived from \eqref{positivebis} and by passing to a subsequence
if necessary, we may assert, as in the proof of Theorem 1.1 of
\cite{CKM}, that $\{u_{m}\}$ converges to a function $u$ in
$H^{1}_{loc}(-\infty ,0]$. Moreover ${\hat{\mathcal{L}}}u_{m}$ has a limit
$v$ in $H^{1}_{loc}$, and $u(0)=\frac{\gamma }{2}v(0)=b$. Invoking \eqref{positivebis} again, we then prove that
$u\in {\mathbf{H}}_{\hat{u}_{s}}$ for some $s \in \{+,-\}$, and
$v={\hat{\mathcal{L}}} u$, ${\hat{J}}(u)\leq c_{-}$. Now, since
$u\in {\mathbf{H}}_{\hat{u}_{s}}$ and $\hat{c}_{-}\leq \hat{c}_{+}$, we must have
${\hat{J}}(u)\geq c_{s} \geq c_{-}$. Hence ${\hat{J}}(u)= c_{-}$,
$\{u_{m}\}$ strongly converges to $u$ in $H^{1}(-\infty ,0)$ and $u$ is
a minimizer of $\hat{J}$ in ${\mathbf{H}}_{\hat{u}_{-}}$. Thus $(u,v)$ satisfies \eqref{eq_u}-\eqref{eq_v} and (i) holds.%
\smallskip

\textbf{Case 2: $\hat{c}_{-}> \hat{c}_{+}$.} The argument is similar, taking
a sequence $\{u_{m}\} \subset {\mathbf{H}}_{\hat{u}_{+}}$ such that
$\lim _{m \to \infty }{\hat{J}}(u_{m})=\hat{c}_{+}$. As in case 1, after extraction,
$\{u_{m}\}$ strongly converges to a minimizer $u$ of $\hat{J}$ in
${\mathbf{H}}_{\hat{u}_{+}}$ and (ii) holds. Now the proof of Proposition \ref{lem:unbounded} is complete.
\eproof \\
\end{proof}

{\bf Proof of Lemma \ref{lem:alternative} and Proposition \ref{prop:isolated}.} 
We actually just need to prove Lemma \ref{lem:alternative},
since Proposition \ref{prop:isolated} then immediately
follows.%

Let $u_{c}$ be a non-constant critical point of $J$ and
$\lim _{z\to -\infty } u_{c}(x)=u_{s_{-}}$,
$\lim _{z\to +\infty } u_{c}(x)=u_{s_{+}}$,
$s_{-},\,s_{+}\in \{-,+\}$. Note that $z_{s_{-}}=(u_{s_{-}},v_{s_{-}},0,0)$
is a hyperbolic equilibrium of the first order Hamiltonian system (HS) with
Hamiltonian function $H(u,v,p,q)$, which is associated to the second order
Lagrangian system (\ref{eq_u})-(\ref{eq_v}). Here, $p$ denotes the momentum
conjugate to $u$ and $q$ the momentum conjugate to $v$. With the Hamiltonian
$H$ being real analytic, the local unstable manifold
$W_{loc}^{u}(z_{s_{-}})$ of $z_{s_{-}}$ is a real analytic submanifold
of ${\mathbb{R}}^{2}\times {\mathbb{R}}^{2}$. Moreover the unstable space of the linearization
of (HS) at $z_{s_{-}}$ is the graph of a linear map from
${\mathbb{R}}^{2}$ to itself, hence $W_{loc}^{u}(z_{s_{-}})$ is the
graph of a real analytic map $\varphi $ from a small neighborhood
${\mathcal{U}}_{s_{-}}$ of $(u_{s_{-}},v_{s_{-}})$ into ${\mathbb{R}}^{2}$, and
$\varphi (u_{s_{-}},v_{s_{-}})=(0,0)$. Similarly,
${z_{s_{+}}}=(u_{s_{+}},v_{s_{+}},0,0)$ is hyperbolic and its local stable
manifold $W_{loc}^{s}({z_{s_{+}})}$ is the graph of a real analytic map
$\psi =(\psi _{p},\psi _{q})$ from a small neighborhood
${\mathcal{U}}_{s_{+}}$ of $(u_{s_{+}},v_{s_{+}})$ into ${\mathbb{R}}^{2}$, and
$\psi (u_{s_{+}},v_{s_{+}})=(0,0)$.%

Recall that we use the same $\rho _{1}$ as introduced in Proposition \ref{Conc-Comp}.
Also, in Lemma \ref{cos}, we choose $\delta =1/2$ with
the associated $\bar{r} > 0$. To employ Proposition \ref{Conc-Comp}, we
pick $0<\rho \leq \min \{\rho _{1}/2, \bar{r}\}$ such that
$\vert P(u-u_{s_{\pm }},v-v_{s_{\pm }})\vert \leq \rho $ implies
$(u,v)\in {\mathcal{U}}_{s_{\pm }}$. Given $\theta \in {\mathbb{R}}$, we denote
${\mathbf{(u},\mathbf{v)}}^{\mathrm{T}}_{\theta }:=P^{-1}(\rho \cos \theta ,\rho
\sin \theta )^{\mathrm{T}}+(u_{s_{-}},v_{s_{-}})^{\mathrm{T}}$ and
${\mathbf{z}}_{\theta }:=({\mathbf{(u},\mathbf{v)}}_{\theta }, \varphi ({\mathbf{(u},\mathbf{v)}}_{\theta })) $.

Let $v_{c}={\mathcal{L}}u_{c}$. Remembering that $\rho _{1}<\rho _{0}$, we may
take $x_{1},\,x_{2}$ associated to $u_{c}$ and $\rho$ as in Proposition \ref{cor:ell}(ii).
Let $\Phi =(U,V,\Pi _{U},\Pi _{V})$ be the flow of (HS)
at time $T=x_{2}-x_{1}+1$ (both $T$ and $\Phi$ depend on the critical
point $u_{c}$ under consideration). Then we may write\break
$P (u_{c}(x_{1})-u_{s_{-}}, v_{c}(x_{1})-v_{s_{-}})^{\mathrm{T}}=(
\rho \cos \theta _{c},\rho \sin \theta _{c})^{\mathrm{T}}$ for some angle
$\theta _{c}$, and
$P\left (U(\mathbf{z}_{\theta _{c}}),V(\mathbf{z}_{\theta _{c}})\right )^{\mathrm{T}}$
lies in the open disk $D_{+}$ of center
$P(u_{s_{+}},v_{s_{+}})^{\mathrm{T}}$ with radius $\rho $.\smallskip

From now on in this proof, \textit{let us assume that $u_{c}$ is not isolated
up to translations}, for the $H^{1}$ metric in the set of critical points
of $J$. Under this assumption, we are going to show that at least one of the two unstable manifolds $W^u(z_{-})\,,\,W^u(z_{+})$ is bounded. By time reversibility and symmetry with respect to $(u_+/2,v_+/2)$, this will automatically imply that both unstable manifolds are bounded, as well as the two stable manifolds $W^s(z_{\pm})$.\smallskip

Let $u$ be a critical point of $J$ sufficiently close to $u_{c}$
in the $H^{1}(\mathbb{R})$ topology but not equal to a translate of
$u_{c}$. Then the trajectory parametrized by
$(u,{\mathcal{L}}u, d u',-{\mathcal{L}}u')$ in the phase space must contain a point
$\mathbf{z}_{\theta }$ with $\theta $ arbitrarily close, but not equal, to the
angle $\theta _{c}$. Moreover, since
$P\left (U(\mathbf{z}_{\theta _{c}}),V(\mathbf{z}_{\theta _{c}})\right )^{\mathrm{T}}$
lies in the open disk $D_{+}$ of center
$P(u_{s_{+}},v_{s_{+}})^{\mathrm{T}}$ with radius $\rho $, the same
is true for
$P\left (U(\mathbf{z}_{\theta }),V(\mathbf{z}_{\theta })\right )^{\mathrm{T}}$ if $u$ is
close enough to $u_{c}$ in $H^{1}(\mathbb{R})$ topology. So
$\theta $ is a zero of each of the functions
$\chi _{1}(\theta )=\Pi _{U}({\mathbf{z}}_{\theta })- \psi _{p}(U({\mathbf{z}}_{\theta }),V(\mathbf{z}_{\theta }))$ and
$\chi _{2}(\theta )=\Pi _{V}({\mathbf{z}}_{\theta })
- \psi _{q}(U({\mathbf{z}}_{\theta }),V(\mathbf{z}_{\theta }))$.

The above argument shows that $\theta _{c}$ is not an isolated zero of
the real-analytic functions $\chi _{1}$ and $\chi _{2}$, which are defined
in a small open interval ${\mathcal{I}}$ containing $\theta _{c}$. So these
functions are identically zero on ${\mathcal{I}}$, which means that the flow
$\Phi $ sends all the points $\mathbf{z}_{\theta }$ near
$\mathbf{z}_{\theta _{c}}$ to points of $W_{loc}^{s}(z_{s_{+}})$.%

To each $\theta $ in ${\mathcal{I}}$, we associate the solution
$Z_{\theta }$ of the Hamiltonian system with initial value
$Z_{\theta }(0)={\mathbf{z}}_{\theta } $. Let $u_{\theta }(x)$ be the first component
of the vector $Z_{\theta }(x)$ and let $j(\theta ):=J(u_{\theta })$. Then it
is not hard to see that the function
$\theta \in {\mathcal{I}}\to u_{\theta }$ is continuous, and even of class
$C^{1}$, for the $H^{1}({\mathbb{R}})$ metric on the target space. Thus,
by the chain rule, $\frac{d}{d\theta } j(\theta )=0$, and
$j(\theta )=J({\theta _{c}})$. Moreover, all the angles in
${\mathcal{I}}$ correspond to critical points $u_{\theta }$ that are not isolated,
up to translation, in $H^{1}(\mathbb{R})$ topology.

Denote by $\Theta $ the set of angles $\theta $ associated to all non-isolated
critical points $J$ which converge to $u_{s_{-}}$ as $x\to -\infty $. We
have shown that $\Theta $ is an open subset of ${\mathbb{R}}$, and by assumption
it contains $\theta _{c}$. Clearly $\Theta $ is $2\pi $-periodic, and the
above argument shows that $j$ is a $2\pi $-periodic and locally constant
function on $\Theta $. Let ${\mathcal{J}}$ be the maximal open interval in
$\Theta $ containing $\theta _{c}$ and let
$\hat{\theta }:=\sup {\mathcal{J}}$. If $\hat{\theta }=+\infty $, then
${\mathcal{J}}=\Theta ={\mathbb{R}}$ and $j$ is constant on ${\mathbb{R}}$, so
Corollary \ref{bounded} gives a uniform $L^{\infty }$ estimate on \textit{all} the
solutions $Z_{\theta }\,$, so
$W^{u}(z_{s_{-}})=\bigcup _{(\theta ,x)\in [0,2\pi )
\times {\mathbb{R}}} Z_{\theta }(x)$ is bounded. Thus, to complete the proof
of Lemma \ref{lem:alternative}, we just need to study the remaining case
when $\hat{\theta }$ is a finite number.

Then, as $\theta \to \hat{\theta }$, $\theta \in {\mathcal{J}}$,
$Z_{\theta }$ converges in the $C^{1}_{\mathrm{loc}}$ topology to
$Z_{\hat{\theta }}$, by continuous dependence of the solutions of the Hamiltonian
system with respect to initial data. Since
$\hat{\theta }\notin {\mathcal{J}}$, the convergence of $u_{\theta }$ to
$u_{\hat{\theta }}$ does not hold for the $H^{1}({\mathbb{R}})$ metric. As
a consequence, from Proposition \ref{Conc-Comp}(iii),
$u_{\hat{\theta }}$ is a critical point of $j$ satisfying the estimate
$\ell \leq j(u_{\hat{\theta }})\leq j(\theta _{c})-\ell $. From now on,
set
$(u_{s_{1}},v_{s_{1}})=\lim _{x\to +\infty }(u_{\hat{\theta }},{\mathcal{L}}u_{
\hat{\theta }})(x)$, $s_{1}\in \{-,+\}$. Let $\underbar{t} \in \mathbb{R}$ be
such that for all $x>\underbar{t}$,
$P (u_{\hat{\theta }},{\mathcal{L}}u_{\hat{\theta }})^{\mathrm{T}}(x)$ lies
in the open disk $D_{1}$ of center
$P (u_{s_{1}},v_{s_{1}})^{\mathrm{T}}$ with radius $\rho $, while
$P (u_{\hat{\theta }},{\mathcal{L}}u_{\hat{\theta }})^{\mathrm{T}}(
\underbar{t})$ sits on the boundary of $D_{1}$. Then, for $\theta $ close
enough to $\hat{\theta }$, there are two numbers
$\underbar{t}_{\theta }<\bar{t}_{\theta }$ such that for all
$x\in (\underbar{t}_{\theta },\bar{t}_{\theta })$,
$P (u_{\theta },{\mathcal{L}}u_{\theta })^{\mathrm{T}}(x)$ lies in the open
disk $D_{1}$, while
$P (u_{\theta },{\mathcal{L}}u_{\theta })^{\mathrm{T}}(\underbar{t}_{
\theta })$ and
$P (u_{\theta },{\mathcal{L}}u_{\theta })^{\mathrm{T}}(\bar{t}_{\theta })$ both
sit on the boundary of $D_{1}$. Moreover
$ \lim _{\theta \to \hat{\theta }} \underbar{t}_{\theta }=\underbar{t}$ and
$\lim _{\theta \to \hat{\theta }}\bar{t}_{\theta }= +\infty  $. The existence
of $\underbar{t}_{\theta }$ is due to the convergence of $Z_{\theta }$ in
$C^{1}_{\mathrm{loc}}$, and the existence of $\bar{t}_{\theta }$ is due to the
lack of convergence in the $H^{1}({\mathbb{R}})$ metric.

Now, it follows from the Hartman-Grobman theorem that the distance between
$Z_{\theta }(\underbar{t}_{\theta })$ and
$W^{s}_{\mathrm{loc}}(z_{s_{1}})$ tends to zero as
$\theta \to \hat{\theta }$, since
$(\bar{t}_{\theta }-\underbar{t}_{\theta })\to \infty $. As a consequence,
remembering that \eqref{decrease} holds on
$W^{s}_{\mathrm{loc}}(z_{s_{1}})$, we obtain a similar estimate when
$\theta $ is sufficiently close to $\hat{\theta }$:
\begin{equation*}
\frac{d}{dx}\vert P (u_{\theta }(\underbar{t}_{\theta })-u_{s_{1}},{
\mathcal{L}} u_{\theta }(\underbar{t}_{\theta })-v_{s_{1}})^{\mathrm{T}}\vert
\leq -\frac{\lambda }{4}\vert P (u_{\theta }(\underbar{t}_{
\theta })-u_{s_{1}},{\mathcal{L}} u_{\theta }(\underbar{t}_{\theta })-v_{s_{1}})^{
\mathrm{T}}\vert \,.
\end{equation*}
Such an inequality, together with the implicit function theorem gives the
continuity, and even the real-analyticity, of
$\underbar{t}_{\theta }$ as a function of ${\theta }$ on a small interval
$(\hat{\theta }-\varepsilon , \hat{\theta })\subset {\mathcal{J}}$. Similarly,
the distance between $Z_{\theta }(\bar{t}_{\theta })$ and
$W^{u}_{\mathrm{loc}}(z_{s_{1}})$ tends to zero as
$\theta \to \hat{\theta }$, and the continuity of
of $\bar{t}_{\theta }$ can be proved using an inequality
analogous to \eqref{increase}.

So we can define two continuous functions
$\underline{\alpha }\,,\,\bar{\alpha }:\,(\hat{\theta }-\varepsilon ,
\hat{\theta })\to {\mathbb{R}}$ which satisfy
\begin{equation*}
P (u_{\theta }-u_{s_{1}},{\mathcal{L}} u_{\theta }-v_{s_{1}})^{\mathrm{T}}(
\underbar{t}_{\theta })=\rho \left (\cos \underline{\alpha }({\theta }),
\sin \underline{\alpha }({\theta })\right )^{\mathrm{T}},
\end{equation*}
\begin{equation*}
P (u_{\theta }-u_{s_{1}},{\mathcal{L}} u_{\theta }-v_{s_{1}})^{\mathrm{T}}(
\bar{t}_{\theta })=\rho \left (\cos \bar{\alpha }({\theta }),\sin
\bar{\alpha }({\theta })\right )^{\mathrm{T}}.
\end{equation*}
Since
$\lim _{\theta \to \hat{\theta }} (u_{\theta },{\mathcal{L}} u_{\theta })(
\underbar{t}_{\theta })=(u_{\hat{\theta }},{\mathcal{L}} u_{\hat{\theta }})(
\underbar{t})$, $\underline{\alpha }(\theta )$ has a finite limit
$\underline{\alpha }(\hat{\theta })$ as $\theta \to \hat{\theta }$. In order
to study the limit of $\bar{\alpha }$, we are going to use Lemma \ref{cos},
remembering that we have fixed $\delta =1/2$ and chosen
$\rho \leq \bar{r}$. Since $u_{\theta }$ is a critical point of $J$, its energy
is zero, so
$E_{u_{\theta }(\underbar{t}_{\theta }),{\mathcal{L}}u_{\theta }(\underbar{t}_{
\theta }),u_{\theta }(\bar{t}_{\theta }),{\mathcal{L}}u_{\theta }(\bar{t}_{
\theta })}(\bar{t}_{\theta }-\underbar{t}_{\theta })=0 $. Then, from (i)
in Lemma \ref{cos}, we find a phase $\varphi _{1}$ independent of the parameters,
such that, for $\theta $ close enough to $\hat{\theta }$,
\begin{equation*}
\cos (\bar{\alpha }(\theta )-\underline{\alpha }(\theta )-(\bar{t}_{
\theta }-\underbar{t}_{\theta })\omega +\varphi _{1})\in [-1/2,1/2]\,.
\end{equation*}
But $(\bar{t}_{\theta }-\underbar{t}_{\theta })\to \infty $ and
$\underline{\alpha }(\theta )\to \underline{\alpha }(\hat{\theta })$ as
$\theta \to \hat{\theta }$. Moreover $\bar{\alpha }$ depends continuously on
$\theta $. So we must have
$\lim _{\theta \to \hat{\theta }}\bar{\alpha }(\theta ) = \infty $. As a
consequence, given any $\alpha \in [0,2\pi ) $, there is a sequence
$\{\theta _{\alpha }^{(n)}\}$ such that
$\bar{\alpha }(\theta _{\alpha }^{(n)}) = \alpha +2n\pi $ for all $n$ large
enough, and $\theta _{\alpha }^{(n)} \to \hat{\theta }$ as
$n \to \infty $. Passing to a subsequence if necessary, it follows from
Proposition \ref{Conc-Comp} that
$\{u_{\theta _{\alpha }^{(n)}}(\cdot +\bar{t}_{\theta _{\alpha }^{(n)}})
\}$ converges for the local $H^{1}$ topology to a non-constant critical
point of $J$, denoted by $u^{(2)}_{\alpha }$, with
$P (u^{(2)}_{\alpha }-u_{s_{1}},{\mathcal{L}}u^{(2)}_{\alpha }-v_{s_{1}})^{
\mathrm{T}}(0)=\rho \left (\cos \alpha ,\sin \alpha \right )^{\mathrm{T}}$ and
$\ell \leq J(u^{(2)}_{\alpha })\leq J(u_{c})-\ell $. Then Corollary \ref{bounded} gives an $L^{\infty }$ estimate on
$Z^{(2)}_{\alpha }=(u^{(2)}_{\alpha }, {\mathcal{L}} u^{(2)}_{\alpha }, d(u^{(2)}_{\alpha })', -{\mathcal{L}} (u^{(2)}_{\alpha })')$, which is independent of
$\alpha $. Moreover, since the distance between $Z_{\theta _{\alpha }^{(n)} }(\bar{t}_{\theta _{\alpha }^{(n)} })$ and
$W^{u}_{\mathrm{loc}}(z_{s_{1}})$ tends to zero as $n\to\infty$, 
$Z^{(2)}_{\alpha }(0)$ belongs to $W^{u}_{\mathrm{loc}}(z_{s_{1}})$. But $W^{u}_{\mathrm{loc}}(z_{s_{1}})$ is a graph in ${\mathbb{R}}^{2}\times {\mathbb{R}}^{2}$, so we conclude that $W^{u}(z_{s_{1}})=\bigcup _{(\alpha ,x)\in [0,2\pi )\times {\mathbb{R}}}
Z^{(2)}_{\alpha }(x)$. As a consequence, this unstable manifold is bounded. This completes the proof of Lemma \ref{lem:alternative}. So Proposition \ref{prop:isolated} is true.
\eproof \\

Recall that the Lagrangian $L(u,v,u',v')$ is autonomous and $u_{*}$ is
a non-constant solution. By taking a small translation in $x$ if necessary,
from now on we always assume that $u'_{*}(0) \neq 0$. This condition also
holds for $u^{*}$, since it is the reverse orbit of $u_{*}$.\medskip

As a consequence of Proposition \ref{prop:isolated}, we obtain the following
result.
%
\begin{cor}
\label{cor:2-2}
There exist $h_{0},\sigma _{0}>0$ and, for any $0<h<h_{0}$, a radius
$\bar{\sigma }(h)>0$ with $\lim \limits _{h\to 0}\bar{\sigma }(h)=0$, such
that the local sublevel set
\begin{equation*}
{\mathcal{V}}_{h}=\{u\in H_{u^{*}}\;:\; u(0)=u^{*}(0)\,,\; \Vert u-u^{*}
\Vert _{H^{1}({\mathbb{R}})}\leq \sigma _{0}\;and\; J(u)\leq J(u^{*})+h
\}
\end{equation*}
satisfies the following property:
\begin{equation*}
u\in {\mathcal{V}}_{h} \Rightarrow \Vert u-u^{*}\Vert _{H^{1}({\mathbb{R}})}<
\bar{\sigma }(h)\;.
\end{equation*}
\end{cor}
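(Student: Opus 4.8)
The plan is to recast the statement as a quantitative strict-minimality estimate on a Poincar\'e-type slice and then establish that estimate by a local compactness argument. Recall from Theorem \ref{thm:exist} that $u^*$ is a global minimizer of $J$ over $H_{u^*}$, so $J(u)\ge J(u^*)$ for every $u\in H_{u^*}$. For $\epsilon\in(0,\sigma_0]$ I would introduce
$$\mu(\epsilon):=\inf\{\,J(u)-J(u^*)\ :\ u\in H_{u^*},\ u(0)=u^*(0),\ \epsilon\le\Vert u-u^*\Vert_{H^1(\bR)}\le\sigma_0\,\}\ \ge 0,$$
and first observe that the whole corollary reduces to the claim that $\mu(\epsilon)>0$ for every $\epsilon\in(0,\sigma_0]$. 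Indeed, granting this, set $s(h):=\sup\{\Vert u-u^*\Vert_{H^1(\bR)}:u\in{\cal V}_h\}\le\sigma_0$. If $h<\mu(\epsilon)$ then no $u\in{\cal V}_h$ can satisfy $\Vert u-u^*\Vert_{H^1(\bR)}\ge\epsilon$ (otherwise $J(u)-J(u^*)\ge\mu(\epsilon)>h$), so $s(h)\le\epsilon$; since $\mu(\epsilon)>0$ this forces $s(h)\to 0$ as $h\to 0$. Taking $\bar\sigma(h):=s(h)+h$ (for $0<h<h_0$ with $h_0$ any fixed positive number) then yields a positive radius with $\bar\sigma(h)\to 0$ and $\Vert u-u^*\Vert_{H^1(\bR)}\le s(h)<\bar\sigma(h)$ for all $u\in{\cal V}_h$, as required.

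To prove $\mu(\epsilon)>0$ I would argue by contradiction, assuming $\mu(\epsilon)=0$ for some $\epsilon$. Then there is a sequence $u_n$ with $u_n(0)=u^*(0)$, $\epsilon\le\Vert u_n-u^*\Vert_{H^1(\bR)}\le\sigma_0$ and $J(u_n)\to J(u^*)=\inf_{H_{u^*}}J$. Thus $\{u_n\}$ is a minimizing sequence for $J$ on the complete metric space $H_{u^*}$, and since $J$ is of class $C^1$ and bounded below, Ekeland's variational principle produces a Palais-Smale sequence $\tilde u_n$ with $\Vert\tilde u_n-u_n\Vert_{H^1(\bR)}\to 0$, $J(\tilde u_n)\to J(u^*)$ and $J'(\tilde u_n)\to 0$ in $H^{-1}(\bR)$. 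Choosing $\sigma_0$ small enough that $4\sigma_0\le\ell/2$ (with $\ell$ from Proposition \ref{cor:ell}), the mutual distances satisfy $\Vert\tilde u_m-\tilde u_n\Vert_{H^1(\bR)}\le\ell/2$ for large $m,n$, so Corollary \ref{local-PS} applies and, along a subsequence, $\tilde u_n\to u_\infty$ in $H^1(\bR)$. The limit $u_\infty$ is then a critical point of $J$ with $J(u_\infty)=J(u^*)$, i.e. a minimizer, and it satisfies $\epsilon\le\Vert u_\infty-u^*\Vert_{H^1(\bR)}\le 2\sigma_0$ (using $u_n\to u_\infty$ and $\Vert u_n-u^*\Vert_{H^1(\bR)}\ge\epsilon$).

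It remains to remove the residual translation. Shrinking $\sigma_0$ further below the isolation radius of Proposition \ref{prop:isolated}, the minimizer $u_\infty$ lies within $2\sigma_0$ of $u^*$ and hence, being a critical point, must belong to the translation orbit of $u^*$: $u_\infty=u^*(\cdot+a_*)$, with $|a_*|$ small because $a\mapsto u^*(\cdot+a)$ is $C^1$ into $H^1(\bR)$ with nonzero derivative $(u^*)'$ at $a=0$. Passing the slice constraint to the limit through the embedding $H^1(\bR)\hookrightarrow C^0(\bR)$ gives $u^*(0)=\lim_n u_n(0)=u_\infty(0)=u^*(a_*)$, so $u^*(a_*)=u^*(0)$. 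Since $(u^*)'(0)\ne0$ (the normalization fixed just before the corollary), $u^*$ is strictly monotone near $0$, so for $\sigma_0$ small we must have $a_*=0$, i.e. $u_\infty=u^*$, contradicting $\Vert u_\infty-u^*\Vert_{H^1(\bR)}\ge\epsilon>0$. This proves $\mu(\epsilon)>0$ and the corollary. The principal difficulty is the compactness of the near-minimizing sequence on the slice, which is why I route the argument through Ekeland's principle and the local Palais-Smale compactness of Corollary \ref{local-PS}, rather than attempting to control a Lagrange multiplier for the pointwise constraint $u(0)=u^*(0)$; the translation degeneracy is then eliminated cleanly by the transversality $(u^*)'(0)\ne0$.
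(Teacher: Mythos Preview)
Your proof is correct and follows essentially the same approach as the paper's own proof: both rely on Corollary~\ref{local-PS} for local Palais--Smale compactness, Proposition~\ref{prop:isolated} for isolation of critical points up to translation, the transversality $(u^*)'(0)\neq 0$ to kill the translation degeneracy on the slice, and Theorem~\ref{thm:exist} to identify $u^*$ as a minimizer. The paper's argument is terse---it simply asserts that once $u^*$ is known to be the unique critical point in a small ball with the Palais--Smale condition in force, the sublevel sets must shrink---whereas you spell out this implication explicitly via the quantitative function $\mu(\epsilon)$ and Ekeland's principle; this is a useful clarification but not a different idea.
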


\begin{proof}
From Corollary \ref{local-PS}, for $\sigma _{1}$ small enough, the functional
$J$ satisfies the Palais-Smale condition on the closed ball of center
$u_{*}$ with radius $\sigma _{1}$ of $H^{1}({\mathbb{R}})$-norm. Since
$u'_{*}(0)\neq 0$, by Proposition \ref{prop:isolated} there exists
$\sigma _{0}\leq \sigma _{1}$ such that $u_{*}$ is the unique critical
point of $J$ on the closed ball of center $u_{*}$ with radius
$\sigma _{0}$, so does $u^{*}$. Consequently in this closed ball, Theorem \ref{thm:exist} tells that $u^{*}$ is the unique minimizer of $J$, which
completes the proof.
\eproof\\
\end{proof}

Consider a sufficiently large number $z$ and define
%
\begin{eqnarray}
\label{Vhz}
{\mathcal{V}}_{h,z}:=\{u\in H^{1}(-z,z)\;:\; u\equiv \hat{u}\; \hbox{ on }\; [-z,z]
\; \hbox{ for some }\; \hat{u}\in {\mathcal{V}}_{h}\}\;.
\end{eqnarray}
Now, for $u\in {\mathcal{V}}_{h,z}$ with $h$ small and $z$ large, the functional
$J$ is $C^{2}$ and strictly convex on
\begin{equation*}
{\mathcal{C}}_{u}:=\{\tilde{u}\in H_{u^{*}}\;:\; \tilde{u}\equiv u\;  \hbox{ on }\; [-z,z]
\;  \hbox{ and }\;\Vert \tilde{u}-u^{*}\Vert _{H^{1}({\mathbb{R}})}\leq
\bar{\sigma }(h)\}\;,
\end{equation*}
which is a closed, bounded and convex subset of $H_{u^{*}}$. Indeed, if
$\tilde{u}\in {\mathcal{C}}_{u}$, any other element of $ {\mathcal{C}}_{u}$ near
$\tilde{u}$ is of the form $\tilde{u}+w$ with
$w\equiv 0$ on $[-z,z]$, while $\Vert (\tilde{u}-u_-)(\tilde{u}-u_+)\Vert _{L^{\infty}({\mathbb{R}}\setminus [-z,z])}$ is small. Thus a direct calculation gives, for some $\hat{k}>0 \,$:
\begin{equation*}
D^{2}J(\tilde{u})\cdot w\cdot w=\int _{{\mathbb{R}}} \{ d(w')^{2} - f'(
\tilde{u})w^{2}+({\mathcal{L}}w')^{2}+\gamma ({\mathcal{L}}w)^{2} \} \geq
\hat{k} \Vert w\Vert^2_{H^{1}({\mathbb{R}})}
\end{equation*}
Moreover, if $\tilde{u}\in {\mathcal{C}}_{u}$ satisfies
$\Vert \tilde{u}-u^{*}\Vert _{H^{1}({\mathbb{R}})}= \bar{\sigma }(h)$, then
$J(\tilde{u})>J(u^{*})+h\geq \min _{{\mathcal{C}}_{u}}J $. So the minimizer $b(u)$ of $J$ on $\mathcal{C}_{u}$
does not saturate the constraint $\Vert \tilde{u}-u^{*}\Vert _{H^{1}({\mathbb{R}})}\leq \bar{\sigma }(h)$.
As a consequence, $(b(u),{\mathcal{L}} b(u))$ solves the system (\ref{eq_u})-(\ref{eq_v})
outside the interval $[-z,z]$, and by the implicit function theorem the map
$b:\,{\mathcal{V}}_{h,z}\to H_{u^*} $ constructed in this way is smooth. This
provides a Lyapunov-Schmidt reduction $J_{z}=J\circ b$ of $J$ defined on
${\mathcal{V}}_{h,z}$, and the following corollary holds.

\begin{cor}
\label{cor:_J_z}
For $h_{0}$ small enough, there is $z_{0}>0$ such that if
$h\in (0,h_{0})$ and $z>z_{0}$, then
\begin{equation*}
\rho (h):=\inf \{\Vert J_{z}'(u)\Vert _{(H^{1}(-z,z))^{*}}\,:\; \;u
\in {\mathcal{V}}_{h,z}\; \hbox{ and }\; J_{z}(u)=J(u^{*})+h\} >0\,.
\end{equation*}
\end{cor}
\begin{proof}
Suppose that the assertion of the corollary is false. Then
$\rho (h)=0$ for small $h$ and Corollary \ref{local-PS} implies that a
Palais-Smale sequence converges to a critical point of $J$ in a small ball
of center $u^{*}$ at the critical level $J(u^{*})+h$. Hence there would
exist critical points of $J$ in any small neighborhood of $u^{*}$. This
is contrary to Proposition \ref{prop:isolated}.
\eproof\\
\end{proof}

\section{
 Construction of {multi-front waves}}\label{multibump-section}
 With $u'_{*}(0)\neq 0$, we now get into details about how to construct
the multi-front solutions. Let $h>0$ be small and $\Delta>0$ large (to be determined
later as depending on $h$). Pick an arbitrary integer
$N\geq 1$ and an arbitrary finite sequence of positive integers
${\mathbf{n}}=(n_{i})_{1\leq i\leq N}$ such that $n_{i}\geq \Delta $ for all
$i$. Take $z>0$ large enough so that
$r_-:=\vert P(u^{*}(-z)-u_{-},v^{*}(-z)-v_{-})^{\mathrm{T}}\vert\leq \bar{r}/2$ and
$r_+:=\vert P(u^{*}(z)-u_{+},v^{*}(z)-v_{+})^{\mathrm{T}}\vert\leq \bar{r}/2$, where
$\bar{r}$ is the small radius considered in Lemma \ref{cos}.
\smallskip

Recall ${\mathcal{V}}_{h,z}$ from \eqref{Vhz} and define as follows a smooth map
$b_{\mathbf{n}}$ from $({\mathcal{V}}_{h,z})^{N+1} \times [-\nu ,\nu ]^{N}$ into
$H_w$, with $w=0$ for $N$ odd, $w=u^*$ for $N$ even. To any
$({\mathbf{u}},{\mathbf{x}})=((u_{i})_{0\leq i\leq N},(x_{i})_{1\leq i \leq N})$ in
$({\mathcal{V}}_{h,z})^{N+1} \times [-\nu ,\nu ]^{N}$, we associate a unique
function $u=b_{\mathbf{n}}({\mathbf{u}},{\mathbf{x}})\in H_w$ such that:
\begin{eqnarray*}
&(S_{1})& \forall i \in [0,N]\cap 2{\mathbb{Z}}\,,\; u\equiv u_{i}(
\cdot -C_{i})\;\text{on}\; (C_{i}-z,C_{i}+z),
\\
&(S_{2})& \forall i \in [0,N]\cap (2{\mathbb{Z}}+1)\,,\; u\equiv u_{i}(C_{i}-
\cdot )\;\text{on}\; (C_{i}-z,C_{i}+z),
\\
&(S_{3})& \Vert u - u_{-}\Vert _{H^{1}(-\infty ,-z)}\leq K r_-,
\\
&(S_{4})& \forall i \in [0,N-1]\cap 2{\mathbb{Z}}\,,\; \Vert u - u_{+}
\Vert _{H^{1}(C_{i}+z,C_{i+1}-z)}\leq K r_+,
\\
&(S_{5})& \forall i \in [0,N-1]\cap (2{\mathbb{Z}}+1)\,,\; \Vert u - u_{-}
\Vert _{H^{1}(C_{i}+z,C_{i+1}-z)}\leq K r_-,
\\
&(S_{6})& \Vert u - u_{\pm }\Vert _{H^{1}(C_{N}+z,\infty )}\leq K
r_\pm,\;\text{where}\; \pm =+ \;\text{for $N$ even,} \; \pm =- \;\text{for}
\\
& & \text{$N$ odd,}
\\
&(S_{7})& C_{0}=0\,,\; C_{i}=C_{i-1}+X_{i}\; (1\leq i \leq N)\,,
\\
&(S_{8})& X_{2j} = x_{2j}+\kappa _{-}+\frac{2\pi n_{_{2j}}}{\omega }\,,
\\
&(S_{9})& X_{2j+1} = x_{2j+1}+\kappa _{+} +
\frac{2\pi n_{_{2j+1}}}{\omega }\,,
\\
&(S_{10})& (u,{\mathcal{L}}u) \text{ {satisfies} (\ref{eq_u})-(\ref{eq_v}) on
each of the intervals }(-\infty ,-z]\,,
\\
& & \;[C_{i}+z,C_{i+1}-z]\,,\;[C_{N}+z,+\infty )\;.
\end{eqnarray*}

Taking a large $K$ independent of
$r_\pm$, we claim that for $h$ small and $z$ large enough, conditions $(S_{1})$-$(S_{10})$ determine
$u$ in a unique way, and explain why the corresponding function
$b_{\mathbf{n}}({\mathbf{u}},{\mathbf{x}})%
$ is smooth. Observe that one can define the set
${\mathcal{U}}_{({\mathbf{u}},{\mathbf{x}})}$ consisting of all functions $u$ satisfying
conditions $(S_{1})$-$(S_{6})$. This set is convex, bounded, closed in
the $H^{1}$ topology. Moreover, the controls $(S_{3})$-$(S_{6})$ on
$u$ imply the strict convexity of $J$ restricted to
${\mathcal{U}}_{({\mathbf{u}},{\mathbf{x}})}$. Indeed, if
$u\in {\mathcal{U}}_{({\mathbf{u}},{\mathbf{x}})}$, any other element of
$ {\mathcal{U}}_{({\mathbf{u}},{\mathbf{x}})}$ is of the form $u+w$ with
$w\equiv 0\;$ on $\; A:=\bigcup _{1\leq i \leq N}[C_{i}-z,C_{i}+z]$, while
$\Vert (u-u_-)(u-u_+)\Vert _{L^{\infty}({\mathbb{R}}\setminus A)}$ is small for $K r_\pm$ small. Then
a direct calculation gives
\begin{equation*}
D^{2}J({u})\cdot w\cdot w=\int _{{\mathbb{R}}} \{ d(w')^{2} - f'(u)w^{2}+({
\mathcal{L}}w')^{2}+\gamma ({\mathcal{L}}w)^{2} \} \geq \bar{k} \Vert w\Vert^2_{H^{1}({
\mathbb{R}})}
\end{equation*}
for some $\bar{k}>0 $, exactly as in the proof of Corollary \ref{cor:2-2}.

So $J$ has a unique minimizer in $ {\mathcal{U}}_{({\mathbf{u}},{\mathbf{x}})}$. Moreover
for $K$ large enough, if a function $u$ belongs to
${\mathcal{U}}_{({\mathbf{u}},{\mathbf{x}})}$, and saturates at least one of the constraints
$(S_{3})$-$(S_{6})$ then
$J(u)>\min _{{\mathcal{U}}_{({\mathbf{u}},{\mathbf{x}})}} J$. In conclusion, the minimizer
does not saturate any of the constraints, so it is the only solution of
$(S_{10})$ in ${\mathcal{U}}_{({\mathbf{u}},{\mathbf{x}})}$ and the implicit function
theorem gives a smooth function $b_{\mathbf{n}}$ of $({\mathbf{u}},{\mathbf{x}})$ in the
$H^{1}$ topology.%

Up to this stage, a Lyapunov-Schmidt reduction has been performed, and
the next task is to minimize the reduced functional
${\mathbf{J}}=J\circ b_{\mathbf{n}}$.
Note that the set ${\mathcal{V}}_{h}$ is a bounded, closed sublevel set of the
weakly lower semicontinuous functional $J$, thus ${\mathcal{V}}_{h,z}$ is weakly compact in
$H^{1}(-z,z)$. Moreover one easily checks that ${\mathbf{J}}$ is weakly lower semicontinuous on $({\mathcal{V}}_{h,z})^{N+1} \times [-\nu ,\nu ]^{N}$, which leads to the existence of a minimizer $({\mathbf{\bar{u}}},{\mathbf{\bar{x}}})$ of ${\mathbf{J}}$ on that set.

\begin{lem}
\label{interior}
Given $z$ large, $h$ small and choosing $\Delta$ large enough independently of $N$, if
$n_{i}\geq \Delta$ for every $1\leq i\leq N$, then
${\hat{u}_{\mathbf{n}}}:=b_{\mathbf{n}}({\mathbf{\bar{u}}},{\mathbf{\bar{x}}})$ is a local
minimizer of $J$.
\end{lem}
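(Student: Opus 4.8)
The plan is to show that the constrained minimizer $({\bf \bar u},{\bf \bar x})$ of ${\bf J}=J\circ b_{\bf n}$ lies in the \emph{interior} of its domain $({\cal V}_{h,z})^{N+1}\times[-\nu,\nu]^N$, and then to convert this into local minimality of $\hat{u}_{\bf n}$ by exploiting that $b_{\bf n}({\bf u},{\bf x})$ is, by construction, the $J$-minimizer over ${\cal U}_{({\bf u},{\bf x})}$. The two constraints to deactivate are the box constraint $x_i\in[-\nu,\nu]$ on the phases and the sublevel constraint $J\le J(u^*)+h$ hidden in $u_i\in{\cal V}_{h,z}$.

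First I would show that each $\bar x_i$ lies in the open interval $(-\nu,\nu)$. Since $b_{\bf n}({\bf u},{\bf x})$ solves \eqref{eq_u}-\eqref{eq_v} on each gap by $(S_{10})$, the energy is constant there, and the first variation of ${\bf J}$ with respect to $x_i$ (all other parameters frozen) equals, up to a fixed nonzero factor, this conserved energy $E_i=E_{\eta,\zeta}(X_i-2z)$ carried on the $i$-th gap, the gap having length $X_i-2z$ by $(S_7)$. By $(S_8)$-$(S_9)$ the endpoints $x_i=\mp\nu$ correspond precisely to the two transition times in Lemma~\ref{cos}(ii), where the energy is shown to be positive, respectively negative. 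With the sign of $P_\pm$ chosen as in Lemma~\ref{cos} so that the proportionality constant is negative, $\partial_{x_i}{\bf J}$ is then strictly negative at $x_i=-\nu$ and strictly positive at $x_i=+\nu$; hence the minimum in $x_i$ is attained strictly inside $(-\nu,\nu)$.

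Next I would rule out saturation of the sublevel constraint, arguing by contradiction: suppose $J_z(\bar u_i)=J(u^*)+h$ for some $i$. Because $n_i\ge D$ with $D$ large, the fronts decouple up to interaction terms that are exponentially small in $D$, thanks to the exponential localization of ${\cal L}$ quantified in Lemma~\ref{smallness}; consequently the partial differential $\partial_{u_i}{\bf J}$ coincides with the single-front reduced gradient $J_z'(\bar u_i)$ up to an error controlled by $D$. Corollary~\ref{cor: J_z} gives $\Vert J_z'(\bar u_i)\Vert\ge \rho(h)>0$ on this level set, so by taking $D$ large enough that the interaction error is below $\rho(h)/2$ we obtain a descent direction for ${\bf J}$ tangent to ${\cal V}_{h,z}$, contradicting minimality. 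Thus every $\bar u_i$ satisfies $J_z(\bar u_i)<J(u^*)+h$ and is interior. This decoupling estimate, uniform in $N$ and in the $n_i$, is the main obstacle of the whole argument.

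Finally, with $({\bf \bar u},{\bf \bar x})$ interior, I would establish local minimality by a projection argument. For $u\in H_{u^*}$ close to $\hat{u}_{\bf n}$ in $H^1(\bR)$, read off reduced data $\Pi(u)=({\bf u}(u),{\bf x}(u))$ by translating each front, via the implicit function theorem using $u_*'(0)\neq 0$, so that the normalization $u_i(0)=u^*(0)$ defining ${\cal V}_h$ holds, recording the residual shapes and positions. For $u$ sufficiently close to $\hat{u}_{\bf n}$ the interior property just proved guarantees $\Pi(u)\in({\cal V}_{h,z})^{N+1}\times[-\nu,\nu]^N$ and $u\in{\cal U}_{\Pi(u)}$, the controls $(S_3)$-$(S_6)$ holding with room to spare. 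Since $b_{\bf n}(\Pi(u))$ minimizes $J$ over ${\cal U}_{\Pi(u)}$,
\[
J(u)\ \ge\ J\bigl(b_{\bf n}(\Pi(u))\bigr)\ =\ {\bf J}(\Pi(u))\ \ge\ {\bf J}({\bf \bar u},{\bf \bar x})\ =\ J(\hat{u}_{\bf n})\,,
\]
which is exactly the asserted local minimality; in particular $\hat{u}_{\bf n}$ is then a critical point of $J$ and hence a genuine solution of \eqref{eq_u}-\eqref{eq_v}.
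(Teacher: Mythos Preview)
Your proposal is correct and follows essentially the same route as the paper: you establish interiority of $({\bf\bar u},{\bf\bar x})$ in $({\cal V}_{h,z})^{N+1}\times[-\nu,\nu]^N$ using exactly the two ingredients the paper uses (Lemma~\ref{cos}(ii) for the phase constraints, Corollary~\ref{cor: J_z} combined with the decoupling Lemma~\ref{gradients} for the sublevel constraints), and then deduce local minimality of $\hat u_{\bf n}$ from the fact that $b_{\bf n}$ already minimizes on each fiber ${\cal U}_{({\bf u},{\bf x})}$. The only cosmetic differences are that the paper treats the $u_i$-constraint before the $x_i$-constraint, and packages your projection argument as the statement that ${\cal O}=\bigcup_{({\bf u},{\bf x})}{\cal U}_{({\bf u},{\bf x})}$ contains an $H^1$-ball around $\hat u_{\bf n}$; your explicit $\Pi$ is just a way of exhibiting, for each nearby $u$, the particular $({\bf u},{\bf x})$ with $u\in{\cal U}_{({\bf u},{\bf x})}$.
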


To prove Lemma \ref{interior}, we introduce the set
${\mathcal{O}}=\bigcup _{({\mathbf{u}},{\mathbf{x}})\in ({\mathcal{V}}_{h,z})^{N+1}
\times [-\nu ,\nu ]^{N}} {\mathcal{U}}_{({\mathbf{u}},{\mathbf{x}})}$ consisting of functions
$u$ satisfying $(S_{1})$-$(S_{6})$ for some
$({\mathbf{u}},{\mathbf{x}})\in ({\mathcal{V}}_{h,z})^{N+1} \times [-\nu ,\nu ]^{N}$.
The next lemma shows that ${\mathcal{O}}$ contains a small ball for the 
$H^{1}({\mathbb{R}})$ metric, with center at $\hat{u}_{\mathbf{{n}}}$. Clearly
$\hat{u}_{\mathbf{{n}}}$ minimizes $J$ on ${\mathcal{O}}$, by virtue of the construction
used in the variational argument, and thus Lemma \ref{interior} is an immediate
consequence.

\begin{lem}
\label{interiorbis}
Choose $z$ large and $h$ small, then $\Delta$ large enough, independently of $N$.
Assume that $n_{i}\geq \Delta$ for all $1\leq i\leq N$. If
$({\mathbf{\bar{u}}},{\mathbf{\bar{x}}})=((\bar{u}_{i})_{0\leq i\leq N},(\bar{x}_{i})_{1
\leq i \leq N})$ is a minimizer of ${\mathbf{J}}$ in
$({\mathcal{V}}_{h,z})^{N+1} \times [-\nu ,\nu ]^{N}$ then
\\
(i) $J_{z}(\bar{u}_{i})<J(u^{*})+h$ for all $0\leq i\leq N$,
\\
(ii) $-\nu <\bar{x}_{i}< \nu $ for all $1\leq i\leq N$.
\end{lem}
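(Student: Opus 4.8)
The plan is to prove that the minimizer $({\bf\bar u},{\bf\bar x})$ lies in the \emph{interior} of the box $({\cal V}_{h,z})^{N+1}\times[-\nu,\nu]^N$, by computing the first variation of ${\bf J}=J\circ b_{\bf n}$ in each coordinate and checking that on every boundary face the admissible variation strictly decreases ${\bf J}$. The two statements correspond to the two kinds of faces: the shift faces $x_i=\pm\nu$ give (ii), and the block faces $J_z(u_i)=J(u^*)+h$ give (i). Throughout I write $u=b_{\bf n}({\bf u},{\bf x})$ for the glued function and use that, by the envelope property in Lemma \ref{Lu} (i.e. $v={\cal L}u$ is the maximizer in $v$), any first variation of ${\bf J}$ reduces to $\int_{\bR}\big(-du''-f(u)+{\cal L}u\big)\,\partial u\,dx$, the $v$-variation dropping out.

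For (ii) I would first establish the \emph{exact} identity $\partial_{x_i}{\bf J}=-E_i$, where $E_i$ is the constant value, on the $i$-th gap $[C_i+z,C_{i+1}-z]$, of the energy $E(u',{\cal L}u',u,{\cal L}u)$. Indeed, by $(S_{10})$ the residual $-du''-f(u)+{\cal L}u$ vanishes on every gap and tail, while increasing $x_i$ rigidly translates to the right all blocks and gaps lying beyond gap $i$, so there $\partial_{x_i}u=-u'$. Since a direct computation gives $\frac{d}{dx}E=-\big(-du''-f(u)+{\cal L}u\big)\,u'$, only the blocks $j>i$ contribute, and the sum $\sum_{j>i}\big(E(C_j+z)-E(C_j-z)\big)$ telescopes to $E_\infty-E_i$, where $E_\infty$ is the energy at $+\infty$, equal to the common value $0$ of the two equilibria (the equal-energy condition $\gamma=9(2\beta^2-5\beta+2)^{-1}$). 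Hence $\partial_{x_i}{\bf J}=-E_i$. Now Lemma \ref{cos}(ii)-(iii), with $\delta=1/2$, $\rho\le\bar r$ and $\tilde n=n_i$, applies because $(S_8)$-$(S_9)$ make the gap length equal to $X_i-2z=x_i+\kappa_\pm+2\pi n_i/\omega-2z$, and for $D$ large ($n_i\ge 1/\epsilon$) and $h$ small (gap endpoints within $\epsilon$ of the front values) it yields $E_i>0$ at $x_i=-\nu$ and $E_i<0$ at $x_i=+\nu$. Thus $\partial_{x_i}{\bf J}<0$ at $x_i=-\nu$ (so increasing $x_i$ lowers ${\bf J}$) and $\partial_{x_i}{\bf J}>0$ at $x_i=+\nu$ (so decreasing $x_i$ lowers ${\bf J}$); in both cases the admissible variation is a descent, so neither endpoint is a minimizer and $-\nu<\bar x_i<\nu$.

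For (i), suppose $J_z(\bar u_{i_0})=J(u^*)+h$ for some $i_0$. By Corollary \ref{cor: J_z} there is $\phi\in H^1(-z,z)$ with $\phi(0)=0$, $\|\phi\|_{H^1}=1$ and $J_z'(\bar u_{i_0})[\phi]\le-\rho(h)/2<0$. Varying $u_{i_0}$ along $\phi$ changes only the $i_0$-th block of $u$ (the other blocks are fixed, and by the envelope property the re-solved gaps and tails do not contribute), so $\partial_{u_{i_0}}{\bf J}[\phi]=\int_{\mathrm{block}\,i_0}\big(-du''-f(u)+{\cal L}u\big)\phi$. Comparing with $J_z'(\bar u_{i_0})[\phi]=\int_{-z}^z\big(-d\tilde u''-f(\tilde u)+{\cal L}\tilde u\big)\phi$, where $\tilde u=b(\bar u_{i_0})$ is the one-block reduction, the integrands differ only in the nonlocal terms ${\cal L}u$ versus ${\cal L}\tilde u$; since $u$ and $\tilde u$ coincide on and near block $i_0$ and differ only beyond the adjacent gaps of length $\gtrsim 2\pi D/\omega$, the exponential decay at rate $\sqrt\gamma$ of the Green's function $G(x)=-\frac{1}{2\sqrt{\gamma}}e^{-\sqrt{\gamma}\,|x|}$ bounds the difference by $Ce^{-\sqrt{\gamma}\,2\pi D/\omega}$. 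Taking $D$ large makes this error $<\rho(h)/4$, so $\partial_{u_{i_0}}{\bf J}[\phi]<-\rho(h)/4<0$. A small step along $\phi$ keeps the data admissible ($\phi(0)=0$ preserves $u(0)=u^*(0)$, $J_z$ strictly decreases so the sublevel constraint is respected, and by Corollary \ref{cor:2-2} the ball constraint $\|\cdot-u^*\|\le\sigma_0$ is inactive) while strictly lowering ${\bf J}$, contradicting minimality; hence $J_z(\bar u_i)<J(u^*)+h$ for all $i$.

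The hard part will be the two variational identities in the nonlocal setting, where ${\cal L}u$ couples all blocks and gaps: the exact identity $\partial_{x_i}{\bf J}=-E_i$ in (ii) and the matching interaction estimate in (i). The saving features I would rely on are that the residual $-du''-f(u)+{\cal L}u$ still vanishes wherever the Euler-Lagrange system holds, so only the blocks enter the relevant variations; that $E$ remains a pointwise, continuous, piecewise-constant quantity even though $J$ is nonlocal; and that $G$ decays exponentially, so cross-block nonlocal effects are $O(e^{-cD})$. The delicate point to keep in mind is that in (ii) the gap energy $E_i$ is itself exponentially small in $n_i$, so the identity $\partial_{x_i}{\bf J}=-E_i$ must be exact (not merely approximate) for Lemma \ref{cos} to pin down the sign.
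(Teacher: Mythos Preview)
Your proposal is correct and follows essentially the same approach as the paper. For (ii) the paper also uses the exact identity $\partial_{x_l}{\bf J}=-E_{\eta_1,\eta_2,\zeta_1,\zeta_2}(\cdot)$ together with Lemma~\ref{cos}, and for (i) it invokes Lemma~\ref{gradients} (whose content is precisely the interaction estimate you sketch via the exponential decay of the Green's function) together with Corollary~\ref{cor: J_z}; your write-up simply unpacks these two ingredients in more detail than the paper does.
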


The following lemma will be used to prove Lemma \ref{interiorbis}.

\begin{lem}
\label{gradients}
Let $z$ and $h$ be small enough, both being independent of $N$. For any
$\alpha >0$ there exists $\bar{\Delta}(\alpha )$, not depending on $N$, such
that if $n_{i}\geq \bar{\Delta}(\alpha )$ $\forall $ $1\leq i \leq N$ then
\begin{equation*}
\Vert J'_{z}(u_{i})-\partial _{u_{i}}{\mathbf{J}}\Vert _{(H^{1}(-z,z))^{*}}<
\alpha \;,\; \forall ({\mathbf{u}},{\mathbf{x}})\in ({\mathcal{V}}_{h,z})^{N+1}
\times [-\nu ,\nu ]^{N}\;,\;0\leq i\leq N\;.
\end{equation*}
\end{lem}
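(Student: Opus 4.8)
The claim is that the reduced gradient $\partial_{u_i}\mathbf{J}$ is uniformly close to $J_z'(u_i)$ when the gluing distances $n_i$ are all large. Here $\mathbf{J} = J \circ b_{\mathbf{n}}$, and $J_z = J \circ b$ is the single-front reduction from Corollary 4.5 (the one attached to $u^*$). The point is that the multi-front minimizer $b_{\mathbf{n}}(\mathbf{u},\mathbf{x})$ is obtained by gluing, near each $C_i$, the single-front piece $u_i$, connected by trajectories that spend a long time (of order $n_i$) near the equilibria. Moving $u_i$ only on $(-z,z)$ (in the $i$-th slot) should, in the limit of large $n_i$, feel the same energy cost whether computed through the global reduction $b_{\mathbf{n}}$ or the local one $b$.

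**Strategy.** I would fix $i$ and a variation $w \in H^1(-z,z)$ (extended to the variation of the $i$-th slot), and compare $\partial_{u_i}\mathbf{J}\cdot w$ against $J_z'(u_i)\cdot w$. Both are obtained by the chain rule applied to a Lyapunov–Schmidt reduction, and by construction the minimizing completions $b$ and $b_{\mathbf{n}}$ both solve the Euler–Lagrange system \eqref{eq_u}-\eqref{eq_v} away from the frozen slab. Because $J$ is stationary in the $b$-directions at both minimizers (the completions are true solutions of the ODE outside the constrained interval), the derivative of the composed functional in the $u_i$-direction equals the partial derivative of $J$ evaluated on the glued profile, restricted to $w$ supported in $(C_i - z, C_i + z)$. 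The difference then reduces to comparing the two minimizing completions, namely $b(u_i)$ and the restriction of $b_{\mathbf{n}}(\mathbf{u},\mathbf{x})$ to a neighborhood of $C_i$, and in particular their traces and derivatives at the endpoints $C_i \pm z$.

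**Key estimate.** The plan is to exploit exponential separation near the saddle-foci. On each interval of length of order $n_i$ between consecutive fronts, the trajectory enters a neighborhood of an equilibrium and, by the linearized saddle-focus dynamics (the estimates \eqref{increase}-\eqref{decrease} of Proposition 2.7(i), applied on the local stable/unstable manifolds), the influence of the boundary data on one slab decays like $e^{-\lambda' X_i/2}$ with $X_i \sim 2\pi n_i/\omega$. Hence the difference between the completion $b_{\mathbf{n}}$ sees on $(C_i-z,C_i+z)$ and the free single-front completion $b(u_i)$ — together with the difference of their derivatives at $C_i \pm z$ — is bounded by $C e^{-c\,D}$ for a fixed $c>0$ depending only on $\lambda, \omega$. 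The local elliptic control of $\mathcal{L}$ from Lemma 2.3, applied on the slabs, is what converts the endpoint-trace differences into an $H^1$-bound on the completions, and thus into a bound on the dual norm of the gradient difference. Choosing $\bar D(\alpha)$ so that $C e^{-c \bar D(\alpha)} < \alpha$ gives the result; crucially the constant $C$ is uniform in $N$ because the estimate is localized near $C_i$ and each neighboring slab contributes independently through the same exponential bound.

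**Main obstacle.** The delicate point is making the comparison of the two Lyapunov–Schmidt completions genuinely uniform in $N$ and independent of the particular slot $i$, including the boundary slots where the completion runs to $\pm\infty$. One must check that perturbing $u_i$ in its central window propagates through the smooth implicit-function-theorem solution of $(S_{10})$ only via the endpoint data at $C_i \pm z$, and that the resulting perturbation of the neighboring completions decays exponentially in the inter-front distances on both sides. This requires a careful bookkeeping of how the traces $u(C_i \pm z)$ and derivatives $u'(C_i \pm z)$ depend on $u_i$ through $b_{\mathbf{n}}$, combined with the contraction property of the gluing map near the hyperbolic equilibria; the uniformity in $N$ hinges on the fact that the relevant derivative norms are governed purely by the local saddle-focus rates and do not accumulate over the (possibly many) fronts.
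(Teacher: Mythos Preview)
Your outline is correct and matches the standard argument: the paper itself omits the proof entirely, calling it ``standard (see e.g.~\cite{BS}).'' The mechanism you describe---stationarity of the completions reducing the chain rule to a boundary-trace comparison, followed by exponential decay from the hyperbolic equilibria via Proposition~\ref{cor:ell}(i) and the local control of Lemma~\ref{smallness}, with uniformity in $N$ coming from locality---is precisely the content of that reference.
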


The proof of Lemma \ref{gradients} is standard (see e.g. \cite{BS}). We
omit it.\\

\noindent
{\bf Proof of Lemma \ref{interiorbis}.}
 We argue indirectly. Suppose that
$J_{z}(\bar{u}_{l})=J(u^{*})+h$ for some $l \in (0,N)$, applying Lemma \ref{gradients} yields
\begin{equation*}
\big <\partial _{{u}_{l}} {\mathbf{J}}({\mathbf{\bar{u}}},{\mathbf{\bar{x}}}),{
\nabla _{H^{1}(-z,z)} J_{z}(\bar{u}_{l})}\big >\,\geq \,
\frac{\rho (h)}{2},
\end{equation*}
with $\rho (h)$ given by Corollary \ref{cor:_J_z}. Then moving
$u_{l}$ slightly in the direction of
$-\nabla _{H^{1}(-z,z)} J_{z}(\bar{u}_{l})$ would decrease
${\mathbf{J}}({\mathbf{u}},{\mathbf{x}})$, which contradicts the minimality of
${\mathbf{J}}({\mathbf{\bar{u}}},{\mathbf{\bar{x}}})$. The proof of (i) is complete.

We next apply Lemma \ref{cos} to prove (ii). Remembering that we have fixed $z$ so that
\begin{equation*}
r_\pm:=\vert P(u^{*}(\pm z),v^{*}(\pm z))^{\mathrm{T}}-P(u_{\pm},v_{\pm})^{\mathrm{T}}\vert \leq \bar{r} /2\;,
\end{equation*}
taking $h$ small enough and $\Delta$ large, we may impose
\begin{equation*}
\vert (\bar{u}_{i}(\pm z),{\mathcal{L}}b_{\mathbf{n}}({\mathbf{\bar{u}}},{\mathbf{\bar{x}}})(C_i\pm (-1)^i z))-(u^{*}(\pm z),v^{*}(
\pm z))\vert <\epsilon(r_\pm) \,
\end{equation*}
with $\epsilon(r_\pm) $ as in Lemma \ref{cos} and the condition $ n_{i}\geq 1/\min\{\epsilon(r_-), \epsilon(r_+)\}$ being
imposed. Suppose $\bar{x}_{l}=-\nu $ for some $l \in (1,N-1)$, it follows
from Lemma \ref{cos} that
\begin{equation*}
\partial _{x_{l}} {\mathbf{J}}({\mathbf{\bar{u}}},{\mathbf{\bar{x}}})=-E_{\eta _{1},
\eta _{2},\zeta _{1},\zeta _{2}}(\kappa _{s}-2z+2\pi n_l/\omega -\nu )<0,
\end{equation*}
where $s=+$ if $l$ is odd, $s=-$ if $l$ is even, $(\eta _{1},
\eta _{2})=(b_{\mathbf{n}}({\mathbf{\bar{u}}},{\mathbf{\bar{x}}}),{\mathcal{L}}b_{\mathbf{n}}({\mathbf{\bar{u}}},{\mathbf{\bar{x}}}))(C_{l-1}+ z)$ and
$(\zeta _{1},
\zeta _{2})=(b_{\mathbf{n}}({\mathbf{\bar{u}}},{\mathbf{\bar{x}}}),{\mathcal{L}}b_{\mathbf{n}}({\mathbf{\bar{u}}},{\mathbf{\bar{x}}}))(C_{l}- z)$. Then increasing
$x_{l}$ slightly would decrease ${\mathbf{J}}$, which again contradicts the
minimality of ${\mathbf{J}}({\mathbf{\bar{u}}},{\mathbf{\bar{x}}})$. Likewise, if
$\bar{x}_{l}=\nu $ we could decrease ${\mathbf{J}}$ by slightly decreasing
$x_{l}$. Now the proof of Lemma \ref{interiorbis} is complete.
\eproof\\

We are now ready to prove the existence result of multi-front solutions,
stated in Theorem \ref{thm:multi}. The stability of such solutions will
be investigated in the next section.\\ 

\noindent
{\bf Proof of Theorem \ref{thm:multi}.}
Impose that $z$ be large enough so that
$K r_\pm < \tilde{\sigma} $, where $\tilde{\sigma}$ is to be chosen later. Then pick
$z$ large and $h$ small enough so that the small number
$\bar{\sigma }(h)$, defined in Corollary \ref{cor:2-2}, is less than
$\tilde{\sigma} $, and so that the conditions of Lemma \ref{interior} are satisfied.

Then the critical point ${\hat{u}_{\mathbf{n}}}$ satisfies the following estimates: 

\begin{eqnarray*}
&\bullet& \Vert \hat{u}_{\mathbf{n}} - u^*\Vert _{H^{1}(-\infty ,X_1-z)}\leq 3\tilde{\sigma},
\\
&\bullet& \forall i \in [1,N-1]\cap 2{\mathbb{Z}}\,,\; \Vert \hat{u}_{\mathbf{n}}(\cdot+C_i) - u^*
\Vert _{H^{1}(-X_i+z,X_{i+1}-z)}\leq 3\tilde{\sigma},
\\
&\bullet& \forall i \in [1,N-1]\cap (2{\mathbb{Z}}+1)\,,\; \Vert \hat{u}_{\mathbf{n}}(\cdot+C_i) - u_*
\Vert _{H^{1}(-X_i+z,X_{i+1}-z)}\leq 3\tilde{\sigma},
\\
&\bullet& \Vert {\hat{u}_{\mathbf{n}}}(\cdot+C_N) - u_{\pm }\Vert _{H^{1}(-X_{N}+z,\infty )}\leq 3\tilde{\sigma},
\;\text{where}\; \pm =+ \;\text{for $N$ even,} \; \pm =- \;\text{for}
\\
& & \text{$N$ odd.}
\end{eqnarray*}

Now we define $\hat{v}_{\mathbf{n}}={\cal L}\hat{u}_{\mathbf{n}}$. Since $\hat{u}_{\mathbf{n}}$ is a critical point of $J$, $(\hat{u}_{\mathbf{n}},\hat{v}_{\mathbf{n}})$ is a solution of \eqref{eq_u}-\eqref{eq_v}. From Lemma \ref{smallness} we also have $\hat{v}_{\mathbf{n}}=G*\hat{u}_{\mathbf{n}}$ with $G(x)=\frac{1}{2\sqrt{\gamma }}e^{-\sqrt{\gamma }\,\vert x\vert }$. From this and the above estimates on $\hat{u}_{\mathbf{n}}$, one can conclude that for $\tilde{\sigma}$ small enough and $\Delta$ large enough (both depending on $\sigma$),
$(\hat{u}_{\mathbf{n}},\hat{v}_{\mathbf{n}})$ satisfies the properties $(a)$ and $(b)$ of Theorem \ref{thm:multi}. This theorem is thus proved.
\eproof\\

\section{Stability}  
\label{ly} 

\setcounter{equation}{0}
From the proof of Theorem \ref{thm:multi}, we know that
$\hat{u}_{\mathbf{{n}}}$ is a local minimizer of $J$ and
$\hat{v}_{\mathbf{{n}}}={\mathcal{L}}\hat{u}_{\mathbf{{n}}}$. To prove Theorem \ref{thm:stab}, we introduce the functional
%
\begin{equation}
\label{LFv}
{\mathcal{E}}(u,v):=J(u)+\frac{\gamma }{2(1+\hat{\delta })}\|v-{\mathcal{L}}u
\|^{2}
\end{equation}
defined for $u \in H_{\hat{u}_{\mathbf{{n}}}}$,
$v \in H_{\hat{v}_{\mathbf{{n}}}}$. Here and in the sequel,
$\Vert \cdot \Vert $ means the $L^{2}({\mathbb{R}})$-norm, and the parameter
$\hat{\delta }>0$ will be chosen later. Proposition \ref{prop:isolated} shows that
$(\hat{u}_{\mathbf{{n}}},\hat{v}_{\mathbf{{n}}})$ is a local minimizer of
${\mathcal{E}}$ for the natural topology of the affine space
$H_{\hat{u}_{\mathbf{{n}}}}\times (\hat{v}_{\mathbf{{n}}}+L^{2}({
\mathbb{R}})\,)$, and it is an isolated critical point of
${\mathcal{E}}$ up to translation in the spatial variable. Also, due to
Corollary \ref{local-PS}, ${\mathcal{E}}$ satisfies the Palais-Smale condition
in a small neighborhood of $(\hat{u}_{\mathbf{{n}}},\hat{v}_{\mathbf{{n}}})$.
\medskip

Consider the Cauchy problem:
%
\begin{eqnarray}
\label{eq_FN_u5}
&&u_{t}-du_{xx}=f(u)-v,
\\
\label{eq_FN_v5}
&&\tau v_{t}- v_{xx}=u-\gamma v,
\end{eqnarray}
with the initial data in the function space $Y=Y_{u} \times Y_{v}$. Here
$Y_{u} = H_{\hat{u}_{\mathbf{{n}}}} \cap C_{b}({\mathbb{R}})$,
$Y_{v} = H_{\hat{v}_{\mathbf{{n}}}} \cap C_{b}({\mathbb{R}})$ and
$C_{b}({\mathbb{R}})$ is the set of bounded uniformly continuous functions
on $\mathbb{R}$. For $(u_{1},v_{1}),(u_{2},v_{2}) \in Y$, define
$||(u_{1},v_{1})-(u_{2},v_{2})||_{Y}=||u_{1}-u_{2}||_{H^{1}({\mathbb{R}})}+||u_{1}-u_{2}||_{L^{\infty }({\mathbb{R}})} + ||v_{1}-v_{2}||_{H^{1}({\mathbb{R}})}+||v_{1}-v_{2}||_{L^{\infty }({\mathbb{R}})}\,.$
Then with minor modification, Theorem 2.1 of
\cite{RS1} (see also \cite{S}, Theorem 14.2) shows that, for given initial
data in $Y$, \eqref{eq_FN_u5}-\eqref{eq_FN_v5} has a unique solution
$(u(\cdot ,t),v(\cdot ,t))$. This solution exists globally in time and
$(u(\cdot ,t),v(\cdot ,t)) \in C([0,\infty ),Y)$. For the proofs, we refer
to \cite{RS1,S} for the detail, including the method of contracting
rectangles. Similar results hold if we work on different function spaces;
for instance, take $Y_{u} = \hat{u}_{\mathbf{{n}}} +C^{2}_{0}({\mathbb{R}})$ and
$Y_{v} = \hat{v}_{\mathbf{{n}}} +C^{{2}}_{0}({\mathbb{R}})$ with the natural topology
inherited from $C^{2}_{0}({\mathbb{R}})$. Here
\begin{equation*}
C^{2}_{0}({\mathbb{R}})=\{w|\frac{d^{j} w}{d^{j}x}\in C_{b}({\mathbb{R}}) ~and
\lim _{|x| \to \infty }\frac{d^{j} w}{d^{j}x}(x)=0~for~0\leq j\leq 2\}.
\end{equation*}

The above results will be used in the proofs of Theorem \ref{thm:stab} and Theorem \ref{thm:mptwobumps}. In the next proposition, we extend a result of \cite{CJM} to show that ${\mathcal{E}}(u,v)$ is a Lyapunov functional for the evolution
flow generated by \eqref{eq_FN_u}-\eqref{eq_FN_v}, for $\hat{\delta }$ chosen small enough.

\begin{prop}
\label{prop:LF}
Assume that $0<\tau <\gamma ^{2}$. Let $\hat{\delta }>0$ such that
$1+\hat{\delta }/2<\gamma ^{2}/{\tau }$. Then for any smooth solution
$(u(x,t), v(x,t))$ of \eqref{eq_FN_u}-\eqref{eq_FN_v},
\begin{eqnarray*}
&&\frac{d}{dt}{\mathcal{E}}(u(\cdot ,t),v(\cdot ,t))\\
&&\leq - \frac{\hat{\delta }}{2(1+\hat{\delta })}\|u_{t}\|^{2}
-\frac{1}{1+\hat{\delta }}\left (\frac{\gamma ^{2}}{\tau }-1-
\frac{\hat{\delta }}{2}\right )\|v-{\mathcal{L}} u \|^{2} 
 -
\frac{\gamma }{(1+\hat{\delta })\tau }\|\frac{\partial }{\partial x}(v-{
\mathcal{L}} u )\|^{2}\\
&&\leq 0\;.
\end{eqnarray*}
\end{prop}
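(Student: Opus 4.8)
The plan is to differentiate $\E$ along the flow, treating separately the contribution of $J(u)$ and that of the quadratic term, and then to recombine the two cross terms that couple $u_t$ to $z:=v-{\cal L}u$ into a single inner product controlled by a weighted Young inequality, with the weight tuned so that the resulting constants match the statement exactly.

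First I would record the gradient of $J$. Differentiating $J(u)=\int L(u,{\cal L}u)\,dx$ and using that ${\cal L}$ is self-adjoint and commutes with $\partial_t$ gives $J'(u)=-du''-f(u)+{\cal L}u$ in $H^{-1}(\bR)$. Inserting the $u$-equation $-du''-f(u)=-u_t-v$ from \eqref{eq_FN_u} yields the clean identity $J'(u)=-u_t-z$, whence
\begin{eqnarray*}
\frac{d}{dt}J(u)=\langle J'(u),u_t\rangle=-\|u_t\|^2-\int_{\bR} u_t\,z\,dx.
\end{eqnarray*}
Next I would differentiate the quadratic term, with $\frac{d}{dt}\big[\frac{\gamma}{2(1+\hat\delta)}\|z\|^2\big]=\frac{\gamma}{1+\hat\delta}\int z\,z_t$. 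Since ${\cal L}$ commutes with $\partial_t$, one has $z_t=v_t-{\cal L}u_t$; writing $v=z+{\cal L}u$ and using $-({\cal L}u)''+\gamma{\cal L}u=u$ together with \eqref{eq_FN_v}, the first component simplifies remarkably to $v_t=\frac{1}{\tau}(z''-\gamma z)$. Therefore, integrating by parts and using self-adjointness of ${\cal L}$ to write $\int z\,{\cal L}u_t=\int ({\cal L}z)\,u_t$,
\begin{eqnarray*}
\frac{\gamma}{1+\hat\delta}\int_{\bR} z\,z_t\,dx
=-\frac{\gamma}{(1+\hat\delta)\tau}\|z'\|^2-\frac{\gamma^2}{(1+\hat\delta)\tau}\|z\|^2
-\frac{\gamma}{1+\hat\delta}\int_{\bR} ({\cal L}z)\,u_t\,dx .
\end{eqnarray*}
Note that the $\|z'\|^2$ coefficient already coincides with the one in the statement.

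The decisive step is to add the two cross terms, which combine into $-\int_{\bR} u_t\big(z+\frac{\gamma}{1+\hat\delta}{\cal L}z\big)\,dx$, and to estimate this single quantity. Here I would use the operator bound $\|{\cal L}z\|\le\gamma^{-1}\|z\|$ (immediate from $\gamma\|{\cal L}z\|^2\le\int z\,{\cal L}z\le\|z\|\,\|{\cal L}z\|$), which gives $\|z+\frac{\gamma}{1+\hat\delta}{\cal L}z\|\le\frac{2+\hat\delta}{1+\hat\delta}\|z\|$, together with the weighted Young inequality $-\int u_t\,g\le\frac{a}{2}\|u_t\|^2+\frac{1}{2a}\|g\|^2$ with the special weight $a=\frac{2+\hat\delta}{1+\hat\delta}$. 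A short arithmetic check shows that both the $\|u_t\|^2$ and the $\|z\|^2$ contributions produced this way equal $\frac{2+\hat\delta}{2(1+\hat\delta)}$ times the respective norm; subtracting these from $-\|u_t\|^2-\frac{\gamma^2}{(1+\hat\delta)\tau}\|z\|^2$ reproduces exactly the coefficients $-\frac{\hat\delta}{2(1+\hat\delta)}$ and $-\frac{1}{1+\hat\delta}\big(\frac{\gamma^2}{\tau}-1-\frac{\hat\delta}{2}\big)$ of the statement, and the proof is complete.

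The main obstacle is the extra cross term $-\frac{\gamma}{1+\hat\delta}\int({\cal L}z)u_t$ coming from $\partial_t{\cal L}u={\cal L}u_t$: it is easy to overlook, and the delicate point is that it must be merged with $-\int u_t z$ \emph{before} estimating, since only the combined vector $z+\frac{\gamma}{1+\hat\delta}{\cal L}z$ admits the norm bound that makes the Young weight $a=\frac{2+\hat\delta}{1+\hat\delta}$ yield precisely the advertised constants (any other split loses the exact matching). The hypotheses $\tau<\gamma^2$ and $1+\hat\delta/2<\gamma^2/\tau$ are not needed for the inequality itself, which holds for every $\hat\delta>0$; they serve only to guarantee that the $\|z\|^2$ coefficient on the right is nonpositive, so that $\E$ decreases strictly along nonstationary trajectories and is a genuine Lyapunov functional.
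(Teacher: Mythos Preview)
Your proof is correct and follows essentially the same route as the paper: set $z=v-{\cal L}u$, compute $J'(u)=-u_t-z$, derive $\tau(z_t+{\cal L}u_t)=z''-\gamma z$, and collect terms. The only difference is in how the two cross terms $-\langle u_t,z\rangle$ and $-\frac{\gamma}{1+\hat\delta}\langle {\cal L}z,u_t\rangle$ are estimated. You merge them into a single inner product and apply one weighted Young inequality with $a=\frac{2+\hat\delta}{1+\hat\delta}$; the paper instead treats them separately, using the unweighted bound $-\langle u_t,z\rangle\le\tfrac12\|u_t\|^2+\tfrac12\|z\|^2$ for the first and $-\frac{\gamma}{1+\hat\delta}\langle {\cal L}z,u_t\rangle\le\frac{1}{1+\hat\delta}\|z\|\,\|u_t\|\le\frac{1}{2(1+\hat\delta)}(\|z\|^2+\|u_t\|^2)$ for the second. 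Both procedures produce exactly the same final coefficients, so your remark that ``any other split loses the exact matching'' is not accurate: the paper's separate-estimate approach also recovers the constants verbatim.
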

\begin{proof}
Let $w=v-{\mathcal{L}}u $. If $u \in Y_{u}$ and $v \in Y_{v}$ then
$v-{\mathcal{L}} u \in H^{1}({\mathbb{R}})$. By Theorem 2.3 of \cite{RS1} (see
also \cite{S}, Theorem 14.3) and density argument, it suffices to treat
the case $Y_{u} = \hat{u}_{\mathbf{{n}}} +C^{2}_{0}({\mathbb{R}})$ and
$Y_{v} = \hat{v}_{\mathbf{{n}}} +C^{2}_{0}({\mathbb{R}})$. Clearly \eqref{eq_FN_u}-\eqref{eq_FN_v} is equivalent to
%
\begin{eqnarray}
\label{FN1}
&& u_{t}=d u_{xx}+u(u-\beta )(1-u)-{\mathcal{L}}u - w,
\\
\label{FN2}
&& \tau (w_{t}+{\mathcal{L}}u_{t})= w_{xx}-\gamma w.
\end{eqnarray}
In terms of $(u,w)$, we rewrite \eqref{LFv} as
%
\begin{equation}
\label{LF}
{\mathcal{E}}_{1}(u,w):=J(u)+\frac{\gamma }{2(1+\hat{\delta })}\|w\|^{2}.
\end{equation}
Let $(u(x,t),w(x,t))$ be a solution of \eqref{FN1}-\eqref{FN2}. Multiplying \eqref{FN2} by $w$ and integrating by parts, we obtain
%
\begin{eqnarray}
\label{2nd}
{\tau }[(w,w_{t})_{L^{2}}+(w,{\mathcal{L}}u_{t})_{L^{2}}]+\| w_{x} \|^{2}+
\gamma \|w\|^{2}=0.
\end{eqnarray}
Recall that ${\mathcal{L}}$ is a self-adjoint operator in
$L^{2}({\mathbb{R}})\,$. Hence
%
\begin{eqnarray}
\label{sadj}
(w,{\mathcal{L}}u_{t})_{L^{2}}=({\mathcal{L}}w,u_{t})_{L^{2}}.
\end{eqnarray}
By making use of \eqref{2nd}-\eqref{sadj}, a direct calculation gives
\begin{eqnarray*}
&& \frac{d}{dt}{\mathcal{E}}_{1}(u(\cdot ,t),w(\cdot ,t))
\\
&& \quad =-\int _{-\infty }^{\infty } (d u_{xx}+u(u-\beta )(1-u)-{\mathcal{L}}u ) u_{t}dx+
\frac{\gamma }{1+\hat{\delta }}(w,w_{t})_{L^{2}}
\\
&& \quad = -\|u_{t}\|^{2}-(w,u_{t})_{L^{2}} -
\frac{\gamma }{1+\hat{\delta }}\left ( (w,{\mathcal{L}}u_{t})_{L^{2}}+
\frac{1}{\tau }(\| w_{x} \|^{2}+\gamma \|w\|^{2})\right )
\\
&& \quad \leq -\frac{1}{2}\|u_{t}\|^{2}+\frac{1}{2}\|w\|^{2}-
\frac{\gamma }{(1+\hat{\delta })}({\mathcal{L}}w ,u_{t})_{L^{2}}
\\
&& \qquad
-\frac{\gamma ^{2}}{(1+\hat{\delta })\tau }\|w\|^{2}-
\frac{\gamma }{(1+\hat{\delta })\tau }\| w_{x} \|^{2}
\\
&& \quad \leq -\frac{1}{2}\|u_{t}\|^{2}+\frac{1}{2}\|w\|^{2}+
\frac{1}{(1+\hat{\delta })}\|w\|\|u_{t}\| -
\frac{\gamma ^{2}}{(1+\hat{\delta })\tau }\|w\|^{2}-
\frac{\gamma }{(1+\hat{\delta })\tau }\| w_{x} \|^{2}
\\
&& \quad \leq -\frac{1}{2}\left (1-\frac{1}{1+\hat{\delta }}\right )\|u_{t}
\|^{2} +\left (\frac{1}{2}+\frac{1}{2(1+\hat{\delta })}-
\frac{\gamma ^{2}}{(1+\hat{\delta })\tau }\right )\|w\|^{2}
\\
&& \qquad \qquad \qquad -\frac{\gamma }{(1+\hat{\delta })\tau }\| w_{x}
\|^{2}
\\
&& \quad \leq -\frac{\hat{\delta }}{2(1+\hat{\delta })}\|u_{t}\|^{2} -
\frac{1}{1+\hat{\delta }}\left (\frac{\gamma ^{2}}{\tau }-1-
\frac{\hat{\delta }}{2}\right )\|w\|^{2} -
\frac{\gamma }{(1+\hat{\delta })\tau }\| w_{x} \|^{2}\leq 0.
\end{eqnarray*}
Since ${\mathcal{E}}_{1}(u(\cdot ,t),w(\cdot ,t))={\mathcal{E}}(u(\cdot ,t),v(\cdot ,t))\,$, the proposition is proved.
\eproof\\
\end{proof}

\noindent
{\bf Proof of Theorem \ref{thm:stab}.}
By the construction, $\hat{u}_{\mathbf{{n}}}$ is a local minimizer of $J$. From Proposition \ref{prop:isolated}, it is an
isolated critical point of $J$, up to translations.
Moreover, from Corollary \ref{local-PS} and the translation invariance
of $J$, there is $\ell >0$ such that any Palais-Smale sequence
$\{u_{j}\}$ for $J$ satisfying
$\inf _{y\in {\mathbb{R}}}\Vert u_{j}-\hat{u}_{\mathbf{{n}}}(\cdot -y)
\Vert _{H^{1}}\leq \ell /4\,$ is precompact up to translations.\smallskip

Let us take $0<\sigma _{n}\leq \ell /4$ such that
$\hat{u}_{\mathbf{{n}}}$ minimizes $J$ on the set
\begin{equation*}
{\mathcal{X}}_{\mathbf{{n}}}:=\{u\in H_{\hat{u}_{\mathbf{{n}}}}\;:\; \inf _{y\in {\mathbb{R}}}\Vert u -\hat{u}_{\mathbf{{n}}}(\cdot -y)
\Vert _{H^{1}}\leq \sigma _{\mathbf{{n}}}\},
\end{equation*}
and such that any critical point of $J$ in ${\mathcal{X}}_{\mathbf{{n}}}$ is a translate
$\hat{u}_{\mathbf{{n}}}(\cdot -y)$ of $\hat{u}_{\mathbf{{n}}}$. Since
$\sigma _{n}\leq \ell /4$, $J$ satisfies the Palais-Smale condition up
to translation in the set ${\mathcal{X}}_{\mathbf{{n}}}$, and there exists
$h_{\mathbf{{n}}}>0$ such that the local sublevel set
\begin{equation*}
{\mathcal{Y}}_{\mathbf{{n}}}:=\{u\in {\mathcal{X}}_{\mathbf{{n}}}:\; J(u)\leq J(\hat{u}_{\mathbf{{n}}})+h_{
\mathbf{{n}}}\}
\end{equation*}
is a subset of
\begin{equation*}
{\mathcal{X}}'_{\mathbf{{n}}}:=\left \{  u\in H_{\hat{u}_{\mathbf{{n}}}}
\;:\; \inf _{y\in {\mathbb{R}}}\Vert u -\hat{u}_{\mathbf{{n}}}(\cdot -y)\Vert _{H^{1}({\mathbb{R}})}\leq
\frac{\sigma _{\mathbf{{n}}}}{2}\right \}  \,.
\end{equation*}

Next we choose $0<\rho _{\mathbf{{n}}}\leq \sigma _{\mathbf{{n}}}$ such that
${\mathcal{E}}(u,v)\leq J(\hat{u}_{\mathbf{{n}}})+h_{\mathbf{{n}}}$ for all $(u,v)\in H_{\hat{u}_{\mathbf{{n}}}}\times H_{\hat{v}_{\mathbf{{n}}}}$ satisfying
$\;\inf _{y\in {\mathbb{R}}} \{ \Vert u -\hat{u}_{\mathbf{{n}}}(\cdot -y)\Vert _{H^{1}({\mathbb{R}})}+\Vert v-\hat{v}_{
\mathbf{{n}}}(\cdot -y)\Vert _{H^{1}({\mathbb{R}})}\}< \rho _{\mathbf{{n}}}\,.$\smallskip

Let $(u(x,t),v(x,t))$ be a solution of \eqref{eq_FN_u}-\eqref{eq_FN_v}
such that
\begin{equation*}
\Vert u(\cdot ,0) - \hat{u}_{\mathbf{{n}}}\Vert _{H^{1}({{\mathbb{R}}})}+
\Vert v(\cdot ,0) - \hat{v}_{\mathbf{{n}}}\Vert _{H^{1}({{\mathbb{R}}})} <
\rho _{\mathbf{{n}}}\;.
\end{equation*}
Then
$J(u(\cdot ,0))\leq {\mathcal{E}}(u(\cdot ,0),v(\cdot ,0))\leq J(\hat{u}_{
\mathbf{{n}}})+h_{\mathbf{{n}}}$, and
$u(\cdot ,0) \in {\mathcal{X}}_{\mathbf{{n}}}$, so $ u(\cdot ,0)\in {\mathcal{Y}}_{\mathbf{{n}}}$. Since
${\mathcal{E}}$ is a Lyapunov functional, we know that
${\mathcal{E}}(u(\cdot ,t),v(\cdot ,t))\leq J(\hat{u}_{\mathbf{{n}}})+h_{\mathbf{{n}}}$ for all $t\geq 0$.
Hence
%
\begin{eqnarray}
\label{allt}
J(u(\cdot ,t))\leq J(\hat{u}_{\mathbf{{n}}})+h_{\mathbf{{n}}} \text{~for all~} t
\geq 0.
\end{eqnarray}
By continuity of the flow $t\to u(\cdot ,t)$ for the $H^{1}$ topology,
let us prove by contradiction that $u(\cdot ,t)$ stays in ${\mathcal{Y}}_{\mathbf{{n}}}$ for all
$t\geq 0$. Otherwise, there would exist a maximal time $T$ such that
$u(\cdot ,t)\in {\mathcal{Y}}_{\mathbf{{n}}}\;(\forall \,0\leq t\leq T)$. This together
with \eqref{allt} would imply
$\inf _{y\in {\mathbb{R}}}\Vert u(\cdot ,T)-\hat{u}_{\mathbf{{n}}}(\cdot-y)\Vert _{H^{1}({\mathbb{R}})}=
\sigma _{\mathbf{{n}}}\,$, which is not possible since
${\mathcal{Y}}_{\mathbf{{n}}} \subset {\mathcal{X}}'_{\mathbf{{n}}}$.\smallskip

Now, since ${\mathcal{E}}$ is bounded from below, it follows from Proposition \ref{prop:LF} that
%
\begin{eqnarray}
\label{L3}
\int _{0}^{\infty }\|u_{t}(\cdot ,t)\|^{2}_{L^{2}({\mathbb{R}})} dt <
\infty ,
\end{eqnarray}
%
\begin{eqnarray}
\label{L4}
\int _{0}^{\infty }\|v(\cdot ,t)-{\mathcal{L}} u(\cdot ,t)\|^{2}_{L^{2}({
\mathbb{R}})} dt < \infty
\end{eqnarray}
and
%
\begin{eqnarray}
\label{L5}
\int _{0}^{\infty }\|\frac{\partial }{\partial x}(v(\cdot ,t)-{\mathcal{L}} u(
\cdot ,t))\|^{2}_{L^{2}({\mathbb{R}})} dt < \infty .
\end{eqnarray}
Since $u(\cdot ,t)$ stays in ${\mathcal{Y}}_{\mathbf{{n}}}$ for all $t\geq 0$, the
flow $t\to (u(\cdot ,t),v(\cdot ,t))$ is uniformly continuous with respect
to the $H^{1}$ metric, and $t\to u_{t}$ is uniformy continuous with respect
to the $L^{2}$ metric. Consequently
\begin{equation*}
\lim _{t\to \infty }\{\|u_{t}(\cdot ,t)\|_{L^{2}({\mathbb{R}})}+\|v(\cdot ,t)-{
\mathcal{L}} u(\cdot ,t) \|_{H^{1}({\mathbb{R}})}\}=0
\end{equation*}
and $J'(u(\cdot ,t))\to 0$ in $H^{-1}({\mathbb{R}})$. Recall that the
Palais-Smale condition holds in ${\mathcal{Y}}_{\mathbf{{n}}}$ up to translation. Hence the set
$\{u(\cdot ,t),t\geq 0\}$ is precompact up to translation for
the $H^{1}({\mathbb{R}})$ metric, and its limit points are necessarily
critical points of $J$ in ${\mathcal{Y}}_{\mathbf{{n}}}$. Furthermore such a critical
point must be a translate of $\hat{u}_{\mathbf{{n}}}$. As a conclusion,
\begin{equation*}
\lim _{t\to \infty }\inf _{y\in {\mathbb{R}}}\{\Vert u(\cdot ,t)-
\hat{u}_{\mathbf{{n}}}(\cdot -y)\Vert _{H^{1}({\mathbb{R}})}+\|v(\cdot ,t)-
\hat{u}_{\mathbf{{n}}}(\cdot -y)\|_{H^{1}({\mathbb{R}})}\}=0.
\end{equation*}
\eproof\\

\section{
Unstable waves 
} \label{unstable}
\setcounter{equation}{0}
In this section we are going to prove Theorem \ref{thm:mptwobumps}. Some notations will be taken from the proof of Theorem \ref{thm:multi}.  The Lyapunov-Schmidt reduction will be very similar, so the details will be omitted. But note that there will be a shift of $\pi/\omega$ in the condition on the distance between bumps: see (S$'_{5}$) below, compared with (S$_9$). Then a localized mountain-pass principle will be employed instead of minimization, in order to construct unstable two-bump solutions.\smallskip

Starting with the set ${\mathcal{V}}_{h,z}$ defined by \eqref{Vhz}, we construct, for $h$ small and $K,z,n$ large enough, a smooth
map $\check{b}_{n}$ from $({\mathcal{V}}_{h,z})^{2}\times [-\nu ,\nu ]$ to
$H^{1}({\mathbb{R}})$; here to each $(u_{0},u_{1},x)$ in
$({\mathcal{V}}_{h,z})^{2}\times [-\nu ,\nu ]$, we associate the unique function
$u\in H^{1}({\mathbb{R}})$ satisfying the following conditions:
\begin{eqnarray*}
&(S'_{1})& u\equiv u_{0}\;\text{on}\; (-z,z),
\\
&(S'_{2})& u\equiv u_{1}(X-\cdot )\;\text{on}\; (X-z,X+z),
\\
&(S'_{3})& \Vert u - u_{-}\Vert _{H^{1}(-\infty ,-z)}\leq K r_-,
\\
&(S'_{4})& \Vert u - u_{-}\Vert _{H^{1}(X+z,\infty )}\leq K r_+,
\\
&(S'_{5})& X= x+\kappa _{+} +\frac{\pi (2n+1)}{\omega }\,,
\\
&(S'_{6})& (u,{\mathcal{L}}u) \text{ satisfies (\ref{eq_u})-(\ref{eq_v}) on each
of the intervals }(-\infty ,-z]\,,
\\
& & \;[z,X-z]\,,\;[X+z,+\infty )\;.
\end{eqnarray*}

With this definition of $\check{b}_{n}$, we define
$\check{J}:=J\circ \check{b}_{n}$. If
$(u_{0},u_{1},x)$ is a critical point of $\check{J}$ in the interior of
${\mathcal{V}}_{h,z}^{2}\times [-\nu ,\nu ]$, then
$\check{b}_{n}(u_{0},u_{1},x)$ is a critical point of $J\,$.\smallskip

%
%
Moreover, adapting Lemma \ref{cos} to the present situation, we see that,
for each $n$ large enough, there is a small $\mu _{n}>0$ such that if
$x\in [-\nu , -\nu /2] $ then
%
\begin{equation}
\label{outward}
\partial _{x} {\check{J}}(u_{0},u_{1},x)\geq \mu _{n}\,,
\end{equation}
while for $x\in [\nu /2, \nu ] $,
%
\begin{equation}
\label{outwardbis}
\partial _{x} {\check{J}}(u_{0},u_{1},x)\leq -\mu _{n}\,.
\end{equation}

Denoting $u^*_z$ the restriction of $u^*$ to $[-z,z]$, we set
\begin{equation*}
\Gamma :=\{g\in C([-1,1], ({\mathcal{V}}_{h,z})^{2}\times [-\nu ,
\nu ])\;:\; g(\pm 1)=(u^*_z,u^*_z,\pm\nu)\,\}
\end{equation*}
and for each large $n$ assigned in ($S'_5$), we define
\begin{equation*}
c_{n}:= \inf _{g\in \Gamma } \max\{\check{J}\circ g (\tau)\,:\; \tau\in [-1,1]\;\hbox{ and }\;
g(\tau)\in ({\mathcal{V}}_{h,z})^{2}\times [-\nu/2 ,\nu/2 ]\,\}\,.
\end{equation*}

We also introduce the following notations:
\begin{eqnarray*}
&&{\check{J}}^a=J^{-1}((-\infty,a])\,,\;{\check{J}}_a=J^{-1}([a,+\infty))\,,\;\dot{\check{J}}^a=J^{-1}((-\infty,a))\,,\\
&&\mathrm{Cr}({\check{J}},a)=\{(u_0,u_1,x)\in ({\mathcal{V}}_{h,z})^{2}
\times [-\nu ,\nu ] \,:\; {\check{J}}(u_0,u_1,x)= a\;\hbox{ and }\; D{\check{J}}(u_0,u_1,x)=0\}\,.
\end{eqnarray*}

For any $g\in\Gamma\,,$ there is $\tau_0\in [-1,1]$ such that $g(\tau_0)\in ({\mathcal{V}}_{h,z})^{2}\times \{0\}\,.$
As a consequence, $c_n\geq \inf \{\check{J}(u_0,u_1,0):\; (u_0,u_1)\in ({\mathcal{V}}_{h,z})^{2}\,\}\,$.
Moreover, when $n$ tends to infinity, ${\check{J}}(u_0,u_1,x)$ converges uniformly to
$J_z(u_0)+J_z(u_1)$ on $({\mathcal{V}}_{h,z})^{2}\times [-\nu,\nu]\,$, so $c_n\geq 2J(u^*) + o(1)_{n\to\infty}\,.$\smallskip

On the other hand, the function $g_0:\,\tau\in [-1,1]\mapsto (u^*_z,u^*_z,\nu \tau)\,$ belongs to $\Gamma\,$.
As a consequence, $c_n\leq \sup_{\tau\in [-1/2,1/2]}{\check{J}}\circ g_0(\tau)=2J(u^*) + o(1)_{n\to\infty}\,$.
Thus, taking $n$ large enough, we may impose
${\check{J}}^{c_n+h/4} \subset ({\mathcal{V}}_{3h/4,z})^2\times [-\nu,\nu]\,$.\smallskip

Remembering that ${\check{J}}$ is a $C^1$ functional on $F=({\mathcal{V}}_{h,z})^{2}\times [-\nu ,
\nu ]$ satisfying the Palais-Smale condition and that its critical points are isolated, we obtain the following deformation lemma:

\begin{lem}
\label{deformation}

For $\bar{\varepsilon}>0$ small enough, if $V$, $W$ are open neighborhoods of $\mathrm{Cr}({\check{J}},c_n)$ in $F$ such that $\bar{V}\subset W$ and dist$(V,F\setminus W)>0\,$, there exist $\varepsilon\in (0,\bar{\varepsilon})$ and a deformation
$\eta\in C([0,1]\times F,F)$ such that:
\begin{eqnarray*}
&(i)& \eta(0,\cdot)=id_{F}\;,
\\
&(ii)& \eta\left(1,({\check{J}}^{c_n+\varepsilon}\setminus V)\cup (({\mathcal{V}}_{h,z})^{2}\times ([-\nu ,-\nu/2]\cup [\nu/2,
\nu ]))\right)\\
&&\subset {\check{J}}^{c_n-\varepsilon}\cup \left(({\mathcal{V}}_{h,z})^{2}\times ([-\nu ,-\nu/2]\cup [\nu/2,
\nu ])\right)\;,
\\
&(iii)& \eta([0,1]\times \bar{V})\subset W\;,
\\
&(iv)& \eta(t,u_0,u_1,x)=(u_0,u_1,x)\,,\\ 
&&\forall (t,u_0,u_1,x)\in [0,1] \times\left( {\check{J}}_{c_n+\bar{\varepsilon}}
\cup {\check{J}}^{c_n-\bar{\varepsilon}}\cup (({\mathcal{V}}_{h,z})^{2}\times ([-\nu ,-3\nu/4]\cup [3\nu/4,
\nu ]))\right).
\end{eqnarray*}

\end{lem}
\begin{proof} This result is a variant of Lemma 1 in \cite{Hof}, and its proof is similar.
Using \eqref{outward}-\eqref{outwardbis}, one builds a locally Lipschitz pseudo-gradient vector field $\Xi$ for ${\check{J}}$ on ${\check{J}}^{c_n+2\bar{\varepsilon}}\cap {\check{J}}_{c_n-2\bar{\varepsilon}}\setminus \mathrm{Cr}({\check{J}},c_n)$ such that, for any $(u_0,u_1,x)\in {\check{J}}^{c_n+2\bar{\varepsilon}}\cap {\check{J}}_{c_n-2\bar{\varepsilon}}\cap (({\mathcal{V}}_{h,z})^{2}\times ([-\nu ,-\nu/2]\cup [\nu/2,
\nu ]))$, the $x$-component of $\Xi(u_0,u_1,x)$ has the sign of $x$.
Then one builds a locally Lipschitz cut-off function $\chi$ equal to $1$ on ${\check{J}}^{c_n+{\varepsilon}}\cap {\check{J}}_{c_n-{\varepsilon}}\cap (({\mathcal{V}}_{h,z})^{2}\times [-\nu/2 ,\nu/2])\setminus V$ and vanishing on
$V'\cup{\check{J}}_{c_n+\bar{\varepsilon}}\cup {\check{J}}^{c_n-\bar{\varepsilon}}\cup (({\mathcal{V}}_{h,z})^{2}\times ([-\nu ,-3\nu/4]\cup [3\nu/4,
\nu ]))\,$ with $V'$ an open neighborhood of $ \mathrm{Cr}({\check{J}},c_n)$ such that $\bar{V'}\subset V$. The deformation $\eta$ is obtained as the flow of $-\chi\Xi$, after a suitable time reparametrization.
\eproof\\
\end{proof}

Now, using Lemma \ref{deformation}, we can easily adapt Hofer's arguments in \cite{Hof}, and find a ``mountain-pass type" critical point
$(u_{0}^{\sharp },u_{1}^{\sharp },x^{\sharp })$ of ${\check{J}}$ at level $c_n$.\smallskip

Setting $({\check{u}}_{n},{\check{v}}_{n}):=(\check{b}_{n}(u_{0}^{\sharp },u_{1}^{\sharp },x^{\sharp }),{\mathcal{L}}\check{b}_{n}(u_{0}^{\sharp },u_{1}^{\sharp },x^{\sharp }))$, we have thus found a two-bump solution of (\ref{eq_u})-(\ref{eq_v}): for $h$ small and $n$ large enough,
$({\check{u}}_{n},{\check{v}}_{n})$ satisfies Theorem \ref{thm:mptwobumps}$(i)$, $(ii)$.
\smallskip

Moreover, ``mountain-pass type" means that for any neighborhood ${\Omega }$ of
$(u_{0}^{\sharp },u_{1}^{\sharp },x^{\sharp })$, the local sublevel set $\Omega\cup\dot{\check{J}}^{c_n}$ is non-empty and not path-connected. We are now going to  prove the instability of $({\check{u}}_{n},{\check{v}}_{n})\,$ for \eqref{eq_FN_u}-\eqref{eq_FN_v}. \medskip

Let us choose $\epsilon _{n}>0$ such that $J$ satisfies the Palais-Smale
condition, up to translation, in the set
\begin{equation*}
{\mathcal{N}}_{n}:= \{u\in H^{1}({\mathbb{R}}):\; \inf _{y\in {\mathbb{R}}}\Vert u-{\check{u}}_{n}(\cdot-y)
\Vert _{{H^{1}}({{\mathbb{R}}})}\leq 2\epsilon _{n} \}\;,
\end{equation*}
and such that any critical point of $J$ in ${\mathcal{N}}_{n}$ is a translate
of ${\check{u}}_{n}$. Since $(u_{0}^{\sharp },u_{1}^{\sharp },x^{\sharp })$ is
of mountain-pass type, for any $0<\rho <\epsilon _{n}$ there exists
$u_{\rho }\in H^{1}({\mathbb{R}})$ such that
%
\begin{equation}
\label{rho}
\Vert u_{\rho }- {\check{u}}_{n} \Vert _{{H^{1}}({{\mathbb{R}}})}+\Vert {
\mathcal{L}}u_{\rho }- {\check{v}}_{n} \Vert _{{H^{1}}({{\mathbb{R}}})} <
\rho \; \text{ and }\; J(u_{\rho })<c_{n}=J({\check{u}}_{n})\;.
\end{equation}
Let $\{(u(x,t),v(x,t))\,:\,(x,t)\in \mathbb{R}\times \mathbb{R}_+\}$ be the solution of \eqref{eq_FN_u}-\eqref{eq_FN_v}
with initial datum $(u(\cdot ,0),v(\cdot ,0))=(u_{\rho },{\mathcal{L}}u_{\rho })$. Using again the
Lyapunov functional ${\mathcal{E}}$ defined in Section~\ref{ly}, we see that
${\mathcal{E}}(u_{\rho },{\mathcal{L}}u_{\rho })=J(u_{\rho })$ and thus
\begin{equation*}
J(u(\cdot ,t))\leq {\mathcal{E}}(u(\cdot ,t),v(\cdot ,t))\leq J(u_{\rho })<J({\check{u}}_{n})\,,
\; \forall t\geq 0\;.
\end{equation*}

To complete the proof, we argue indirectly to show the existence of
$\tau _{n}(\rho )\geq 0$ such that
\begin{equation*}
\inf _{y\in {\mathbb{R}}} \{\Vert u(\cdot ,t)-{\check{u}}_{n}(\cdot -y)
\Vert _{{H^{1}}({{\mathbb{R}}})}+\Vert v(\cdot ,t)-{\check{v}}_{n}(
\cdot -y)\Vert _{{H^{1}}({{\mathbb{R}}})}\}\geq \epsilon _{n}\,,\;
\forall t\geq \tau _{n}(\rho )\;.
\end{equation*}
Assuming that $\tau _{n}(\rho )$ does not exist, we find a sequence
$\{t_{j}\}$ of times, with $t_{j}\to \infty $ as $j\to \infty $, and a
sequence of translations $\{y_{j}\}$ such that
\begin{equation*}
\Vert u(\cdot ,t_{j})-{\check{u}}_{n}(\cdot -y_{j})\Vert _{{H^{1}}({{
\mathbb{R}}})}+\Vert v(\cdot ,t_{j})-{\check{v}}_{n}(\cdot -y_{j})
\Vert _{{H^{1}}({{\mathbb{R}}})}< \epsilon _{n}\;.
\end{equation*}
By the continuity properties of the flow, we have
\begin{equation*}
\Vert u(\cdot ,t)-{\check{u}}_{n}(\cdot -y_{j})\Vert _{{H^{1}}({{
\mathbb{R}}})}+\Vert v(\cdot ,t)-{\check{v}}_{n}(\cdot -y_{j})\Vert _{{H^{1}}({{
\mathbb{R}}})}\leq 2\epsilon _{n}\,,\;\forall t_{j}\leq t\leq t_{j}+
\theta \;
\end{equation*}
with $\theta >0$ independent of $j$. Since ${\mathcal{E}}$ is bounded from
below, applying Proposition \ref{prop:LF} yields
%
\begin{eqnarray}
\label{L3bis}
\lim _{j\to \infty }\int _{t_{j}}^{t_{j}+\theta } \|u_{t}(\cdot ,t)\|^{2}_{L^{2}({
\mathbb{R}})} dt \,=\, 0\,,
\end{eqnarray}
%
\begin{eqnarray}
\label{L4bis}
\lim _{j\to \infty }\int _{t_{j}}^{t_{j}+\theta } \|v(\cdot ,t)-{\mathcal{L}} u(
\cdot ,t) \|^{2}_{L^{2}({\mathbb{R}})} dt \,=\, 0
\end{eqnarray}
and
%
\begin{eqnarray}
\label{L5bis}
\lim _{j\to \infty }\int _{t_{j}}^{t_{j}+\theta } \|
\frac{\partial }{\partial x}(v(\cdot ,t)-{\mathcal{L}} u(\cdot ,t))\|^{2}_{L^{2}({
\mathbb{R}})} dt \,=\, 0\,.
\end{eqnarray}
Pick $t'_{j}\in [t_{j},\,t_{j}+\theta ]$ such that
\begin{equation*}
\lim _{j\to \infty } \{ \|u_{t}(\cdot ,t'_{j})\|^{2}_{L^{2}({\mathbb{R}})}+
\|v(\cdot ,t'_{j})-{\mathcal{L}} u(\cdot ,t'_{j})\|^{2}_{H^{1}({\mathbb{R}})}
\} \,=\,0\,.
\end{equation*}
Then $u^{(j)}:=u(\cdot ,t'_{j})$ is a Palais-Smale sequence for $J$ in
${\mathcal{N}}_{n}$, and $\lim J(u^{(j)})\leq J(u_\rho)<J({\check{u}}_{n})$. So, using the Palais-Smale condition up to translation in ${\mathcal{N}}_{n}$, we get a critical
point in that set, at a critical level less than $J({\check{u}}_{n})$. This is absurd
since the only critical points of $J$ in ${\mathcal{N}}_{n}$ are the
translates of ${\check{u}}_{n}$. Theorem \ref{thm:mptwobumps} is thus proved.
{\qed}%

\section{Appendix} \label{appendix}

\setcounter{equation}{0}
In this Appendix, we clarify the conditions on the parameters ensuring that
both $(u_{-},v_{-})$ and $(u_{+},v_{+})$ are saddle-focus equilibria at the same energy level.\medskip

First of all, given $\beta\in (0,1)$ and $d,\gamma>0$, the equilibria of (\ref{eq_u})-(\ref{eq_v}) are of the form $(u,\frac{u}{\gamma})$ where $u$ is solution of the polynomial equation $u(u-\beta)(1-u)=\frac{u}{\gamma}\,.$ An obvious solution is $0$, and this gives a first equilibrium $(u_-,v_-)=(0,0)$ having energy zero. If $\gamma>\frac{4}{(1-\beta)^2}$ there are two other equilibria, obtained by solving $u^2-(1+\beta)u+(\beta+\frac{1}{\gamma})=0$. By an elementary computation, one easily checks that one of these additional equilibria has zero energy if and only if $\gamma=\frac{9}{2\beta^2-5\beta+2}\,$, with $\beta<1/2$. Then this equilibrium is $(u_+,v_+)=(\frac{2(\beta+1)}{3},\frac{2(\beta+1)}{3\gamma})$. The other one is $(u_+/2,v_+/2)$ and it is a center of symmetry of our problem. This is better seen if one sets
$k=\frac1{3}(\beta^2-\beta+1)$ and makes the change of variables
%
\begin{equation}
\label{cv}
\left \{
\begin{array}{l}
u=(\beta +1)/3+\sqrt{k}U\,,
\\
v=(\beta +1)/3\gamma +\sqrt{k}V\,.
\end{array}
\right .
\end{equation}
Then, with our choice $\gamma=\frac{9}{2\beta^2-5\beta+2}$, (\ref{eq_u})-(\ref{eq_v}) is equivalent to the system
%
\begin{eqnarray}
\label{eq_U}
&& -dU''=k(U-U^{3})-V,
\\
&& -V''=U-\gamma V.
\label{eq_V}
\end{eqnarray}
The new system is symmetric with respect to the origin. Its nonzero equilibria, which correspond to $(u_{\pm},v_{\pm})$ in the new coordinates, are
$
\pm\left(\sqrt{1-\frac{1}{k\gamma }},\frac{1}{\gamma }\sqrt{1-\frac{1}{k\gamma }}\,\right)$.\smallskip

So, if we impose $0<\beta<1/2$ and $\gamma=\frac{9}{2\beta^2-5\beta+2}\,,$ it is clear that (\ref{eq_u})-(\ref{eq_v}) has the same linearization at $(u_{-},v_{-})$ and $(u_{+},v_{+})$.
Linearizing this system at $(u_{-},v_{-})=(0,0)$, we get an equation of the form $h''=Bh$ where $h(x)$ is a column vector with two components, and
$$B=\begin{bmatrix} 
	d^{-1}\beta & d^{-1}  \\
	-1 & \gamma \\
	
	\end{bmatrix}\;.$$
The equilibria $(u_\pm,v_\pm)$ are of saddle-focus type when $B$ has no real eigenvalue, which means that $(\mathrm{tr} \,B)^2-4\,\mathrm{det} \,B <0\,.$ This condition may be written as follows:
$$ \gamma^2d^2-2(2+\beta\gamma)d +\beta^2 <0\;.  $$
It holds when
$$ \frac{2+\beta\gamma - 2\sqrt{1+\beta\gamma}}{\gamma^2} < d < \frac{2+\beta\gamma + 2\sqrt{1+\beta\gamma}}{\gamma^2} \,.$$
But we also need the functional $J$ to be bounded from below, to ensure the existence of the minimizer $u^*\,$. This imposes the additional condition $d>\frac{1}{\gamma^2}\,,$ as can be seen from Proposition \ref{positive}.\smallskip

In summary, to guarantee that $(u_{-},v_{-})$ and $(u_{+},v_{+})$ are saddle-focus equilibria having the same energy and that $J$ is bounded from below, we require
\begin{equation*}
\begin{split}
&0<\beta <1/2\,,\ \ \gamma =9/(2\beta^2-5\beta+2)\,,\\
&\frac{1}{\gamma^2 }\max(1,2+\beta\gamma-2\sqrt{1+\beta\gamma}) < d < \frac{2+\beta\gamma+2\sqrt{1+\beta\gamma}}{\gamma^2 }\,,
\end{split}
\end{equation*}
as stated in (\ref{Hy}).

\vspace{.3in}
\noindent
{\bf \large Acknowledgments}
The authors thank an anonymous referee who 
gave valuable comments and suggestions to improve the paper.
Research is supported in part by MOST 105-2115-M-007-009-MY3, the Ministry of Science and Technology, Taiwan. Part of the work was done when Chen 
 was visiting the Universit\'e Paris-Dauphine, and S\'er\'e 
  was visiting the National Tsing Hua University and National Center for Theoretical Sciences, Taiwan.

\end{document}